\newtheorem{acknowledge}{Acknowledge}
\renewcommand{\ldots}{\dotsc}
\newtheorem{model-problem}{Problem}
\newcommand{\bx}{\textbf{x}}
\newcommand{\by}{\textbf{y}}
\newcommand{\bz}{\textbf{z}}
\newcommand{\br}{\textbf{r}}
\newcommand{\bq}{\textbf{q}}
\def\T{{\mathcal T}}
\def\S{{\mathcal S}}
\def\E{{\mathcal E}}
\def\pT{{\partial T}}
\def\bn{{\bf n}}
\def\bq{{\bf q}}
\def\3bar{{|\hspace{-.02in}|\hspace{-.02in}|}}
\def\OO{{\cal O}}
\def\bq{\begin{equation}}
\def\eq{\end{equation}}
\def\bbq{\begin{equation*}}
\def\eeq{\end{equation*}}
\def\br{\begin{eqnarray}}
\def\er{\end{eqnarray}}
\def\brr{\begin{eqnarray*}}
\def\err{\end{eqnarray*}}
\def\O{\Omega}
\def\E{{\mathcal E}}
\def\pa{\partial}
\def\bn{{\bf n}}
\def\3bar{{|\hspace{-.02in}|\hspace{-.02in}|}}
\newtheorem{WEAK GALERKIN ALGORITHM}{WEAK GALERKIN ALGORITHM}
\newtheorem{SIMPLIFIED WEAK GALERKIN ALGORITHM}{SIMPLIFIED WEAK GALERKIN ALGORITHM}
\begin{document}

\setlength{\parindent}{0.25in} \setlength{\parskip}{0.08in}

\title{Superconvergence of Numerical Gradient for Weak Galerkin Finite Element Methods on Nonuniform Cartesian Partitions in Three Dimensions}
\author{Dan Li\thanks{Research Center for Computational Science, Northwestern Polytechnical University, Xi'an, Shannxi 710072, China. The research of Dan Li was supported in part by National Natural Science Foundation of China grant number 11471262.} \and
Yufeng Nie\thanks{Research Center for Computational Science, Northwestern Polytechnical University, Xi'an, Shannxi, 710129, China(yfnie@nwpu.edu.cn). The research of Yufeng Nie
was supported by National Natural Science Foundation of China
grants 11471262.}
\and
Chunmei Wang\thanks{Department of Mathematics, Texas Tech
University, Lubbock, TX  79409, USA. The research of Chunmei Wang was partially supported by National Science Foundation Awards DMS-1849483 and DMS-1648171. Email: chunmei.wang@ttu.edu.}}

\maketitle
\begin{abstract}
A superconvergence error estimate for the gradient approximation of the second order elliptic problem in three dimensions is analyzed by using weak Galerkin finite element scheme on the uniform and non-uniform cubic partitions. Due to the loss of the symmetric property from two dimensions to three dimensions, this superconvergence result in three dimensions is not a trivial extension of the recent superconvergence result in two dimensions \cite{sup_LWW2018} from rectangular partitions to cubic partitions. The error estimate for the numerical gradient in the $L^{2}$-norm arrives at a superconvergence order of ${\cal O}(h^r) (1.5 \leq r\leq 2)$ when the lowest order weak Galerkin finite elements consisting of piecewise linear polynomials in the interior of the elements and piecewise constants on the faces of the elements are employed. A series of numerical experiments are illustrated to confirm the established superconvergence theory in three dimensions.
\end{abstract}

\begin{keywords}
weak Galerkin, finite element method, superconvergence, non-uniform, cubic partitions, Cartesian partitions, second order elliptic problem, three dimensions.
\end{keywords}

\begin{AMS}
Primary 65N30, 65N12, 65N15; Secondary 35J25, 83E15.
\end{AMS}

\section{Introduction}
Superconvergence is a phenomenon where the numerical solution converges to the exact solution at a rate faster than generally expected. Superconvergence has been widely used in post-processing techniques to yield a more accurate approximation \cite{REELJW-2002}. Superconvergence has also been employed by the mesh refinement and adaptivity \cite{WCZ-2014,sup_CMAME1992} to yield a posterior error estimator \cite{PEE_MAJT2000,HCJW-2003,FLLWZ-2006,supposter_PIJNM11992,supposter_PIJNM21992}. There has been a variety of research work in superconvergence based on finite difference methods \cite{JAFRDG-2006,REXFZC-2018}, finite element methods \cite{ HFS_AMC2001, CBPE_MMA1996, HCJW-2003,CMCYQH1995,wangsup,sup_QDZQL1989,MZ_MMA2002}, discontinuous Galerkin methods \cite{SDG_SJAM2015}, hybridized discontinuous Galerkin methods \cite{CSA_SIAM2016}, smoothed finite element methods \cite{sup_CRC2016}, and weak Galerkin finite element methods \cite{AHSH2014,sup_LWW2018,sup_LW2018, LJW2018,SAPP_JSC2018,ellip_WY2013}.

In this paper, we are concerned with new developments of superconvergence of  weak Galerkin finite element method for second order elliptic boundary value problem (BVP) in three dimensions. To this end, we consider the second order elliptic problem in three dimensions: Find $u=u(x, y, z)$ satisfying
\begin{equation}\label{model}
\begin{split}
-\nabla \cdot ({\color{black}A}\nabla u) =& f,  \quad \mbox{in}~~~ \O, \\
u=&g,\quad \mbox{on}~~ \pa\O ,
\end{split}
\end{equation}
where $\O$ is an open bounded domain in $\mathbb{R}^3$ with Lipschitz continuous boundary $\pa \O$; $f=f(x, y, z)\in H^{-1}(\Omega)$ and $g=g(x, y, z)\in H^{\frac12}(\pa\O)$ are given functions defined on $\O$ and the boundary $\pa \O$, respectively. We assume that the diffusive coefficient tensor ${\color{black}A}=\{a_{ij}\}_{3\times3}$ is uniformly bounded, symmetric, and positive definite in $\O$.

The weak formulation of the second order elliptic model problem \eqref{model} using the usual integration by parts is as follows: Find $u\in H^1(\O)$  satisfying $u=g$ on $\pa\O$, such that
\begin{equation}\label{weakform}
({\color{black}A}\nabla u, \nabla v)=(f, v), \qquad  \forall v\in V,
\end{equation}
where $V=\{v\in H^1(\Omega): v=0 \ \text{on}\ \pa \O\}$.

Superconvergence for the gradient of the finite element approximation for the second order elliptic boundary value problem has been an active research topic for many years \cite{CBPE_MMA1996,HTJW-2002,MKPN1984,VKPDL1986,AHSIHLB-1996,sup_GFEM2006}. There have been various numerical methods for solving the second order elliptic equations \eqref{model}, such as finite element methods, finite volume methods, and finite difference methods etc. We shall focus on a newly-developed numerical method named ``weak Galerkin finite element method (WG-FEM)''  which is a natural extension of the classical Galerkin finite element methods. WG-FEM has several advantages over the classical Galerkin finite element methods: (1) WG-FEM is flexible to use discontinuous functions with interior information and boundary information; (2) WG-FEM is flexible to use polygons in two dimensions or polyhedra in three dimensions in mesh generation; (3) WG-FEM is stable and preserves the physical properties. WG-FEM has been widely applied to solve various partial differential equations such as elliptic interface problem \cite{ANWG_EIP2016}, Maxwell's equations \cite{WWmaxwell, WG_ME2017,MWYS_MAX2015}, the Helmholtz equation \cite{MWYS_HE2014}, wave equation \cite{DWGWE_JSC2017}, Stokes equations \cite{sup_LW2018,JWYSE-2016},  the div-curl system \cite{WWdivcurl}, the biharmonic problem \cite{WWhybird, WWbiharmonic}, the Cahn-Hilliard equation\cite{WZZZ_MC2018}, the singularly perturbed convection-diffusion-reaction problems \cite{LYZZ-WG-2018} etc.  Recently, the primal-dual weak Galerkin finite element method has been successfully developed to solve challenging problems such as the second order elliptic equation in non-divergence form \cite{PDWG_MC2018}, the Fokker-Planck equation \cite{fp2018} and the elliptic cauchy problems \cite{ECP2018,CW2018}.

Some superconvergence results were observed in the numerical experiments of  WG-FEM method on uniform meshes  for the gradient approximation  for the  elliptic equation in three dimensions (see $\|\nabla_{d}e_{h}\|$ in Table 4.11 \cite{ellip_NWA2013}) and the wave equation  (see $\|\nabla_{w}(e_{h})\|$ in Table II \cite{DWGWE_JSC2017}). The numerical results in
\cite{PDWG_MC2018} showed the superconvergence rate ${\cal{O}}(h^4)$ in the discrete $L^{2}$-norm on uniform triangular partitions. Recently, the superconvergence theory based on WG scheme has been developed and analyzed on non-uniform rectangular partitions for the second order elliptic problem \cite{sup_LWW2018} and stokes equation \cite{sup_LW2018}, respectively. In \cite{ellip_WY2013}, a superconvergence in $L^{2}$-norm was proved between the $L^{2}$ projection of the exact solution and its numerical approximation. In \cite{AHSH2014}, the authors studied the $H^{1}$- superconvergence of the WG-FEM method by $L^{2}$ projections introduced in \cite{ellip_JW2000}, and derived a superconvergence rate ${\cal{O}}(h^{1.5})$ or better by using the lowest order weak Galerkin element approximations for the elliptic problem.

There are some superconvergence results in three-dimensions in the literature \cite{HFS_AMC2001,REXFZC-2018, AHSKMK2010,VKPDL1986,k2005, RLZZ1008, ZZ1998}. The difficulty in the analysis of superconvergence for problems in three-dimensions lies in the loss of orthogonality and/or symmetry compared with the analysis for problems in two dimensions. In this paper, we shall extend the superconvergence result in \cite{sup_LWW2018} for the second order elliptic problem \eqref{model} from two dimensions to three dimensions. This is a non-trivial extension of \cite{sup_LWW2018} in both the analysis and numerical experiments. The main difficulty in this paper compared with \cite{sup_LWW2018} lies in that the symmetric property for the rectangular partitions in two dimensions is not available for the cubic partition in three dimensions. The innovative contribution in this paper is to develop the superconvergence order ${\cal{O}}(h^{r}) (1.5 \leq r\leq 2)$ for the numerical gradient for the second order problem in three dimensions.

The rest of this paper is organized as follows. In Section \ref{Section:WeakGradient}, we simply review the weak gradient operator as well as its discrete version. Section \ref{Section:WG-Scheme} is devoted to reviewing the WG-FEM finite element scheme for the second order elliptic problem \eqref{model} in three dimensions. A simplified WG-FEM scheme is derived in Section \ref{Section:WG-SWG}. The error equation for the simplified WG scheme is developed in Section \ref{Section:error equation}. In Section \ref{Section:TE:22:45}, some technical results are provided which are useful in the analysis of the superconvergence of WG method. Superconvergence theory is established in Section \ref{Section:SWG}.  In Section \ref{Section:NE}, a variety of numerical experiments are demonstrated to verify the established superconvergence theory.


\section{Weak Gradient and Discrete Weak Gradient}\label{Section:WeakGradient}
The classical gradient operator is the differential operator used in the weak formulation (\ref{weakform}) of the second order elliptic model problem (\ref{model}). In this section, we will briefly review the weak gradient operator as well as its discrete version which were first introduced in \cite{ellip_WY2013, ellip_MC2014}.

Let $T$ be any polyhedral domain with boundary $\partial T$. Denote by $v=\{v_{0},v_{b}\}$ a weak function on $T$, where the first and second components $v_{0}$ and $v_{b}$ represent the information of $v$ in the interior and on the boundary of $T$, respectively. Note that $v_{b}$ may not necessarily be related to the trace of $v_{0}$ on the boundary $\partial T$.  However, it is feasible to take $v_b$ as the trace of $v_{0}$ on $\partial T$.

We introduce the space of the weak functions on $T$, denoted by $W(T)$; i.e.,
\begin{equation*}
W(T)=\{v=\{v_0,v_b\}: v_0 \in L^2(T), v_b \in L^2(\partial T)\}.
\end{equation*}

The weak gradient of $v\in W(T)$, denoted by $\nabla_{w}v$, is defined as a linear functional in the dual space of $[H^1(T)]^3$ satisfying
\begin{equation}\label{weak gradient notation}
\langle\nabla_{w}v,\boldsymbol{\psi}\rangle_{T}=-(v_{0},\nabla\cdot\boldsymbol{\psi} )_{T}+\langle v_{b},\boldsymbol{\psi}\cdot \mathbf{n} \rangle_{\partial T},\quad \forall\boldsymbol{\psi}\in [H^1(T)]^3,
\end{equation}
where $\bn$ is the unit outward normal direction to $\partial T$.

Denote by $P_r(T)$ the set of polynomials on $T$ with total degree no more than $r$. A discrete version of $\nabla_{w} v$ for any $v\in W(T)$, denoted by $\nabla_{w, r, T} v$, is defined as the unique vector-valued polynomial in $[P_r(T) ]^3$ satisfying
\begin{equation}\label{disgradient}
(\nabla_{w, r, T} v, \boldsymbol{\psi})_T=-(v_0,\nabla \cdot \boldsymbol{\psi})_T+\langle v_b, \boldsymbol{\psi} \cdot  \textbf{n}\rangle_{\partial T},  \quad\forall\boldsymbol{\psi}\in [P_r(T)]^3.
\end{equation}

\section{Weak Galerkin Finite Element Scheme}\label{Section:WG-Scheme}
Let ${\cal T}_{h}$ be a polyhedral partition of the domain $\Omega\subset \mathbb R^3$ which is shape regular as specified in $\cite{ellip_MC2014}$. Denote by $\mathcal{E}_h$ the set of all flat faces in ${\cal T}_{h}$, and $\mathcal{E}_h^{0}=\mathcal{E}_h\setminus \partial\Omega $ the set of all interior flat faces. Denote by $h_{T}$ the size of the element $T\in {\cal T}_{h}$ and $h=\max_{T\in {\cal T}_{h}}h_{T}$ the mesh size of the partition ${\cal T}_{h}$.

Let $k\geq1$ be a given integer. We introduce the local discrete weak finite element space on each element $T\in {\cal T}_h$, denoted by $V(T, k)$; i.e.,
$$V(T, k)=\{v=\{v_{0},v_{b}\},v_{0} \in P_{k}(T),v_{b}  \in P_{k-1}(F), \ F\subset \partial T \}.$$
Patching $V(T, k)$ over all the elements $T\in {\cal T}_h$ through a common value $v_{b}$ on the interior interface $\E_h^0$ gives rise to a global weak finite element space $V_h$; i.e.,
$$
V_h=\{\{v_0,v_b\}: \{v_0,v_b\}|_T\in V(T, k),\ \mbox{$v_b$ is single-valued
on $\E_h$}\}.
$$
We further introduce the subspace of $V_{h}$ with vanishing  boundary values, denoted by $V_{h}^{0}$; i.e.,
$$V_{h}^{0}=\{\{v_{0},v_{b}\}\in V_{h}, v_{b}|_{F}=0, F\subset \partial \Omega \}.$$

For any $v\in V_h$, denote by $\nabla_{w}v$  the discrete weak gradient $\nabla _{w,k-1,T}v$ computed by using (\ref{disgradient})  on each element $T$; i.e.,
$$
(\nabla _{w,k-1}v)|_T= \nabla _{w,k-1,T}(v|_T).
$$
For simplicity of notation and without confusion,  we shall use $\nabla_{d} $ to denote $\nabla _{w,k-1}$; i.e.,
$$
\nabla _{d} v = \nabla _{w,k-1} v, \qquad \forall v\in V_h.
$$

For any $u=\{u_{0},u_{b}\}$ and $v=\{v_{0},v_{b}\}$ in $V_{h}$, we introduce the following two bilinear forms; i.e.,
\begin{eqnarray*}
 (A\nabla_{d}u,\nabla_{d}v)_h&=&\sum_{T\in\mathcal{T}_h}(A\nabla_{d}u,\nabla_{d}v)_{T},\\
s(u,v)&=&\rho h^{-1} \sum_{T\in\mathcal{T}_h}\langle Q_{b}u_{0}-u_{b},Q_{b}v_{0}-v_{b}\rangle_{\partial T},
\end{eqnarray*}
where $\rho>0$ is a parameter, and $Q_b$ is the usual $L^{2}$ projection operator from $L^2(F)$ onto $P_{k-1}(F)$.

We are in a position to review the weak Galerkin finite element method for the second order elliptic model problem (\ref{model}) based on the weak formulation (\ref{weakform}) \cite{ellip_WY2013, sup_LWW2018}.

\begin{WEAK GALERKIN ALGORITHM}\label{WGFEM} Find $u_{h}=\{u_{0},u_{b}\}\in V_{h}$ satisfying $u_b=\widetilde{Q}_b g$ on $\partial \Omega$ such that
\begin{equation}\label{WG-scheme}
({\color{black}A}\nabla_{d}u_{h},\nabla_{d}v_{h})_h+s(u_{h},v_{h})=(f,v_{0}),  \qquad \forall v_{h}\in V_{h}^{0},
\end{equation}
where $\widetilde{Q}_b g$ is a suitably-chosen projection operator of the Dirichlet boundary data $g$ onto the space of polynomials of degree $k-1$.
\end{WEAK GALERKIN ALGORITHM}

The approximate boundary data $\widetilde{Q}_b g$ may be chosen as
\begin{equation}\label{boundary-approximation}
\widetilde{Q}_b g:=Q_b g + \varepsilon_b,
\end{equation}
where $\varepsilon_b$ is a small perturbation of the $L^2$ projection $Q_b g$. A special example of the perturbation term is given by $\varepsilon_b=0$ such that $\widetilde{Q}_b g=Q_b g$. However, a non-zero perturbation $\varepsilon_b$ is necessary in the analysis of the superconvergence of the weak gradient approximation.

Note that the coefficient matrix of (\ref{WG-scheme}) is symmetric and positive definite for any $\rho>0$. Thus, the system (\ref{WG-scheme}) is solvable.

\section{Simplified Weak Galerkin Algorithm}\label{Section:WG-SWG}
In what follows of this paper, we shall focus on the lowest order of WG finite element, i.e., $k=1$. More precisely, the WG finite element $u_h$ is a piecewise linear polynomial in the interior and a piecewise constant on the boundary. The discrete weak gradient $\nabla_d u_h$ is a piecewise vector-valued constant.

A weak function $v =\{v_0, v_b\} \in V_{h}$ can be rewritten as
$$
v=\{v_{0},0\}+\{0,v_{b}\},
$$
which, for simplicity of notation and without confusion, will be denoted by $v=v_{0}+v_{b}$. Denote by $V_{0}=\{v_{0}=\{v_{0},0\}\in V_{h}\}$  the interior space, and $V_{b}=\{v_{b}=\{0,v_{b}\}\in V_{h}\}$  the boundary space, respectively. It is easy to check that $\nabla_{d}v_{0}=0$ from the definition of discrete weak gradient \eqref{disgradient}. Thus, the weak Galerkin algorithm \eqref{WG-scheme} can be simplified as follows: Find $u_{h}=\{u_{0},u_{b}\}\in V_{h}$ satisfying $u_{b}=\widetilde{Q}_b g$ on $\pa \O$ such that
\begin{equation}\label{a6}
({\color{black}A}\nabla_{d}u_{b},\nabla_{d}v_{b})_{h}+s(u_{h},v_{h})=(f,v_{0}), \qquad \forall v_{h}\in V_{h}^{0}.
\end{equation}

We introduce an extension operator ${\S}$ mapping $v_{b}\in P_{0}(\partial T)$ to a function in $P_1 (T)$ such that
\begin{equation}\label{a7}
\langle{\S}(v_{b}),Q_{b}\psi\rangle_{\partial T}\,=\,\langle v_{b},\psi\rangle_{\partial T},
\quad \forall\psi\in P_{1}(T).
\end{equation}
This implies
\begin{equation}\label{L2}
\begin{split}
 \langle Q_{b}u_{0}-u_{b},Q_{b}{\S}(v_{b})-v_{b}\rangle_{\partial T} =&
  \langle -u_{b},Q_{b}{\S}(v_{b})-v_{b}\rangle_{\partial T}\\
 =&\langle Q_{b}{\S}(u_{b})-u_{b},Q_{b}{\S}(v_{b})-v_{b}\rangle_{\partial T}.
\end{split}
\end{equation}

Letting $v_{h}=\{{\S}(v_{b}),v_{b}\}\in V_{h}^{0}$ in \eqref{a6}, and using (\ref{L2}), we obtain a simplified weak Galerkin finite element scheme.

\begin{SIMPLIFIED WEAK GALERKIN ALGORITHM} Find $u_{b}\in V_{b}^{g}$ satisfying
\begin{equation}\label{SWG}
({\color{black}A}\nabla_{d}u_{b},\nabla_{d}v_{b})_{h}+\rho h^{-1} \sum_{T\in\mathcal{T}_h}\langle Q_{b}{\S} (u_{b})-u_{b},Q_{b}{\S} (v_{b})-v_{b}\rangle_{\partial T}=(f,\S (v_{b})),
\end{equation}
for any $v_b\in V_b^0$. Here, $V_b^0=\{v_b\in V_b: v_b|_{\partial \Omega}=0\}$, and $V_b^g=\{v_b\in V_b: v_b|_{\partial \Omega}=\widetilde{Q}_b g\}$.
\end{SIMPLIFIED WEAK GALERKIN ALGORITHM}

\section{Error Equations}\label{Section:error equation}
In this section, we will derive an error equation for the simplified weak Galerkin finite element algorithm \eqref{SWG}, which will play an important role in the analysis of the superconvergence error estimates in Section \ref{Section:SWG}. For the convenience of analysis, we assume the coefficient tensor $A$ in the model problem \eqref{model} is a piecewise matrix-valued constant with respect to the finite element partition ${\cal T}_h$. However, the results can be generalized to the variable coefficient tensor $A$ without any difficulty, provided that the coefficient tensor $A$ is piecewise smooth.

On each element $T\in {\cal T}_h$, denote by $Q_{0}$ and $Q_{b}$ the usual $L^{2}$ projection operators onto $P_{1}(T)$ and $P_{0}(F)$, respectively. Denote by $\mathbb{Q}_{h}$ the usual $L^{2}$ projection operator onto $[P_{0}(T)]^{3}$. The $L^{2}$ projection operators $Q_{b}$ and $\mathbb{Q}_{h}$ satisfy the commutative property \cite{ellip_WY2013, sup_LWW2018}:
\begin{eqnarray}\label{COMM-PRO}
\nabla_{d}Q_{b}w=\mathbb{Q}_h\nabla w,\qquad \forall w\in H^{1}(T).
\end{eqnarray}

Denote by $e_{b}=Q_{b}u-u_{b}$ the error function between the WG solution and the $L^2$ projection of the exact solution of the model problem (\ref{model}). For the convenience of analysis, we introduce the flux variable $\textbf{q}=A\nabla u$.

\begin{lemma}\label{lemma equation}
The error function $e_b$ satisfies the following \textit{error equation}
\begin{equation}\label{a9}
({\color{black}A}\nabla_{d}e_{b},\nabla_{d}v_{b})_{h}+\rho h^{-1} \sum_{T\in\mathcal{T}_h}\langle Q_{b}{\S}(e_{b})-e_{b},Q_{b}{\S}(v_{b})-v_{b}\rangle_{\partial T}=\zeta_{u}(v_{b}),
\end{equation}
for any $v_b\in V_b^0$, where
\begin{equation}\label{a10}
\begin{split}
\zeta_{u}(v_{b})=&\sum_{T\in\mathcal{T}_h}\langle(\textbf{q}-\mathbb{Q}_h\textbf{q})\cdot \mathbf{n},{\S}(v_{b})-v_{b}\rangle_{\partial T} \\
&+\rho h^{-1}\sum_{T\in\mathcal{T}_h}\langle Q_{b}\S(Q_{b}u)-Q_{b}u,Q_{b}{\S}(v_{b})-v_{b}\rangle_{\partial T}
\end{split}
\end{equation}
is a linear functional on $V_{b}$.
\end{lemma}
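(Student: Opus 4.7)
The plan is to substitute $e_b = Q_b u - u_b$ into the left-hand side of (\ref{a9}) and use bilinearity of both $(A\nabla_d\cdot,\nabla_d\cdot)_h$ and the stabilizer to peel off the $u_b$-contribution, which matches the left-hand side of (\ref{SWG}) exactly and thus equals $(f,\S(v_b))$. The stabilizer contribution from $Q_b u$ already reproduces the second summand of $\zeta_u$ in (\ref{a10}) verbatim, so the lemma reduces to the single identity
\[
(A\nabla_d Q_b u,\nabla_d v_b)_h - (f,\S(v_b)) = \sum_{T\in\mathcal{T}_h}\langle(\textbf{q} - \mathbb{Q}_h\textbf{q})\cdot\mathbf{n},\S(v_b) - v_b\rangle_{\partial T}.
\]

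For the volume part, I would use the commutative property (\ref{COMM-PRO}) to replace $\nabla_d Q_b u$ by $\mathbb{Q}_h\nabla u$, then exploit the piecewise-constant structure of $A$ together with $\nabla_d v_b \in [P_0(T)]^3$ to slide $A$ across the inner product and absorb $\mathbb{Q}_h$, rewriting the expression as $(\mathbb{Q}_h\textbf{q},\nabla_d v_b)_T$. Since $v_b\in V_b$ has $v_0 = 0$, the definition (\ref{disgradient}) with $r = 0$ collapses this to $\langle v_b,\mathbb{Q}_h\textbf{q}\cdot\mathbf{n}\rangle_{\partial T}$. For the load term, I would integrate by parts in $(f,\S(v_b))_T = -(\nabla\cdot\textbf{q},\S(v_b))_T$ to get a boundary contribution $\langle\textbf{q}\cdot\mathbf{n},\S(v_b)\rangle_{\partial T}$ and a volume remainder $(\textbf{q},\nabla\S(v_b))_T$; since $\nabla\S(v_b)$ is piecewise constant, the remainder equals $(\mathbb{Q}_h\textbf{q},\nabla\S(v_b))_T$, and a second integration by parts (together with $\nabla\cdot\mathbb{Q}_h\textbf{q} = 0$ on each $T$) converts it into $\langle\mathbb{Q}_h\textbf{q}\cdot\mathbf{n},\S(v_b)\rangle_{\partial T}$.

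Subtracting and summing produces $-\sum_T\langle\mathbb{Q}_h\textbf{q}\cdot\mathbf{n},\S(v_b) - v_b\rangle_{\partial T} + \sum_T\langle\textbf{q}\cdot\mathbf{n},\S(v_b)\rangle_{\partial T}$, which collapses to the target right-hand side after adding and subtracting the auxiliary sum $\sum_T\langle\textbf{q}\cdot\mathbf{n},v_b\rangle_{\partial T}$. I expect the only delicate point to be the verification that this inserted sum vanishes: this relies on the exact flux $\textbf{q} = A\nabla u$ having continuous normal component across every interior face in $\mathcal{E}_h^0$, so the contributions from the two elements sharing a face cancel thanks to opposite outward normals and the single-valuedness of $v_b$, combined with $v_b|_{\partial\Omega} = 0$ built into $V_b^0$. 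Everything else is bookkeeping, and the linearity of $\zeta_u(\cdot)$ in $v_b$ is inherited directly from the linearity of the two boundary sums defining it in (\ref{a10}).
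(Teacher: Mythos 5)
Your proposal is correct and follows essentially the same route as the proof the paper delegates to Lemma 5.1 of \cite{sup_LWW2018}: substitute $e_b=Q_bu-u_b$ into the bilinear forms, cancel the $u_b$ part against the scheme \eqref{SWG}, and reduce to the identity $(A\nabla_d Q_bu,\nabla_d v_b)_h-(f,\S(v_b))=\sum_{T}\langle(\textbf{q}-\mathbb{Q}_h\textbf{q})\cdot\mathbf{n},\S(v_b)-v_b\rangle_{\partial T}$ via the commutative property \eqref{COMM-PRO}, the piecewise constancy and symmetry of $A$, element-wise integration by parts, and the cancellation $\sum_T\langle\textbf{q}\cdot\mathbf{n},v_b\rangle_{\partial T}=0$ from the continuity of the normal flux and $v_b\in V_b^0$. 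All the key steps you flag (absorbing $\mathbb{Q}_h$ against piecewise constants, $\nabla\cdot\mathbb{Q}_h\textbf{q}=0$ on each $T$, and the interior-face cancellation) are exactly the ingredients needed, so no gap remains.
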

\begin{proof}
The proof is similar to the proof of Lemma 5.1 in \cite{sup_LWW2018}, and therefore the details are
omitted here.
\end{proof}

\section{Technical Estimates}\label{Section:TE:22:45}
We consider the second order elliptic model problem \eqref{model} on the unit cubic domain $\Omega=(0, 1)^{3}$. Let the domain $\Omega$ be partitioned into cubic elements as the Cartesian product of three partitions $\Delta_{x}$, $\Delta_{y}$ and $\Delta_{z}$ on the unit interval $(0, 1)$:
\begin{eqnarray*}
&&\Delta_{x}:0=x_{0}<x_{1}<x_{2}\ldots <x_{i}<\ldots< x_{n-1}<x_{n}=1,\\
&&\Delta_{y}:0=y_{0}<y_{1}<y_{2}\ldots <y_{j}<\ldots< y_{m-1}<y_{m}=1,\\
&&\Delta_{z}:0=z_{0}<z_{1}<z_{2}\ldots< z_{s} < \ldots<z_{q-1}<z_{q}=1.
\end{eqnarray*}
Let $T=[x_{i-1}, x_{i}]\times[y_{j-1}, y_{j}]\times[z_{s-1}, z_{s}] \in {\cal T}_h$ be a cubic element for $i=1, \ldots, n$, $j=1,\ldots, m$ and $s=1, \ldots, q$ (see Figure
6.1 for reference).
Denote by $|e_x|$, $|e_y|$ and $|e_z|$ the length of the edge of the cubic element $T$ in the $x$-, $y$- and $z$- direction, respectively. Denote by $|T|$ the volume of the element $T$. Denote by $|F_p|$ the area of the flat face $F_{p}$ for $p=1, \ldots, 6$ such that $|F_{1}|=|F_{2}|$, $|F_{3}|=|F_{4}|$ and $|F_{5}|=|F_{6}|$. Denote by $M_{c}=(x_{c}, y_{c}, z_{c})$ the center of the cubic element $T$, and $M_{p}=(x_{p}^{*}, y_{p}^{*}, z_{p}^{*})$ the center of the flat face $F_{p}$ for $p=1, \ldots, 6$, respectively.
The unit outward normal directions to the flat faces $F_p$ for $p=1, \ldots, 6$ are given by $\bn_1=(-1, 0, 0)^{'}$, $\bn_2=(1,0,0)^{'}$, $\bn_3=(0,-1,0)^{'}$, $\bn_4=(0,1,0)^{'}$, $\bn_5=(0,0,-1)^{'}$, and $\bn_6=(0,0,1)^{'}$, respectively.

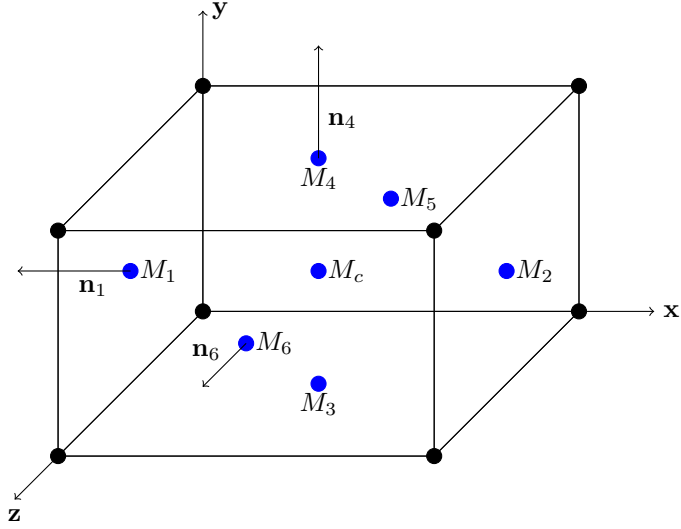
\begin{figure}[h]
\begin{center}
\begin{tikzpicture}
\coordinate (A1) at (-2,0,-2); \filldraw[black] (A1) circle(0.1);
\coordinate (A2) at (3, 0,-2); \filldraw[black] (A2) circle(0.1);
\coordinate (A3) at (3, 3,-2);\filldraw[black] (A3) circle(0.1);
\coordinate (A4) at (-2,3,-2); \filldraw[black] (A4) circle(0.1);
\coordinate (A5) at (-2,0,3); \filldraw[black] (A5) circle(0.1);
\coordinate (A6) at (3, 0,3); \filldraw[black] (A6) circle(0.1);
\coordinate (A7) at (3, 3,3); \filldraw[black] (A7) circle(0.1);
\coordinate (A8) at (-2,3,3); \filldraw[black] (A8) circle(0.1);

 \coordinate (M1234mid) at (0.5,1.5,-2.0);\coordinate (M5678mid) at (0.5,1.5,3.0);
\coordinate (M2376mid) at (3.0,1.5,0.5);\coordinate (M1485mid) at (-2.0,1.5,0.5);
\coordinate (M1562mid) at (0.5,0.0,0.5);\coordinate (M3487mid) at (0.5,3.0,0.5);
 \filldraw[blue] (M1485mid)circle(0.1);
 \filldraw[blue] (M2376mid)circle(0.1);

\filldraw[blue] (M3487mid)circle(0.1);
 \filldraw[blue] (M1562mid)circle(0.1);

 \filldraw[blue] (M1234mid)circle(0.1);
 \filldraw[blue] (M5678mid)circle(0.1);

\draw (A1)--(A4)--(A8)--(A5)--cycle;
\draw (A2)--(A3)--(A7)--(A6)--cycle;
\draw (A1)--(A5)--(A6)--(A2)--cycle;
\draw (A3)--(A4)--(A8)--(A7)--cycle;
\draw (A1)--(A2)--(A3)--(A4)--cycle;
\draw (A5)--(A6)--(A7)--(A8)--cycle;

\coordinate (M12end) at (4,0,-2);
\coordinate (M14end) at (-2,4,-2);
\coordinate (M15end) at (-2,0.0,4.5);
\draw[->] (A2)--(M12end);
\draw[->] (A4)--(M14end);
\draw[->] (A5)--(M15end);
\draw node[right] at (M12end) {$\bx$};
\draw node[right] at (M14end) {$\by$};
\draw node[below] at (M15end) {$\bz$};

\coordinate (center) at (0.5,1.5,0.5);
\filldraw[blue] (center)circle(0.1);

 \draw node [right][black] at (M1485mid) {$M_1$};
\draw node [right][black] at (M5678mid) {$M_6$};
\draw node [below][black] at (M3487mid) {$M_4$};
\draw node [right][black] at (M1234mid) {$M_5$};
\draw node [below][black] at (M1562mid) {$M_3$};
\draw node [right][black] at (M2376mid) {$M_2$};

\draw node[right] at (center) {$M_c$};

\coordinate (M1end) at (-3.5,1.5,0.5);
\coordinate (M4end) at (0.5,4.5,0.5);
\coordinate (M6end) at (0.5,1.5,4.5);
\coordinate (M5end) at (0.5,1.5,-3.5);
\draw[->] (M1485mid)--(M1end);
\coordinate (ne1) at (-2.5,1.5,0.5);
\draw node[below] at (ne1) {$\bn_1$};
\draw[->] (M3487mid)--(M4end);
\coordinate (ne4) at (0.5,3.5,0.5);
\draw node[right] at (ne4) {$\bn_4$};

\coordinate (ne6) at (0.4,1.5,3.3);
\draw node[left] at (ne6) {$\bn_6$};
\draw[->] (M5678mid)--(M6end);
\end{tikzpicture}
\caption{A cubic element $T\in\T_h$.}\label{cubic-element}
\end{center}
\end{figure}

On the element $T$, denote by $v_{bp}$ the value of $v_b$ on the face $F_p, \ p=1,\ldots, 6$. Using \eqref{disgradient}, we have
$$
(\nabla_d v_b, \boldsymbol \psi)_T = \langle v_b, \boldsymbol\psi\cdot\bn\rangle_\pT,\qquad \forall \boldsymbol\psi \in [P_0(T)]^3,
$$
which gives
\begin{equation}\label{EQ:weak-gradient}
\nabla_d v_b = \left(\frac{v_{b2}-v_{b1}}{|e_x|},\frac{v_{b4}-v_{b3}}{|e_y|},\frac{v_{b6}-v_{b5}}{|e_z|}\right)^\prime.
\end{equation}

For any linear function $\psi\in P_1(T)$, it is easy to see that $Q_{b}\psi=\psi(M_{p})$ on each face $F_{p}$. It thus follows from \eqref{a7} that
\begin {eqnarray}\label{a11}
\sum_{p=1}^{6}|F_{p}| \,{\S}(v_{b})(M_{p})\psi(M_{p})=\sum_{p=1}^{6}|F_{p}|\: v_{bp}\:\psi(M_{p}),\qquad \forall\psi\in P_{1}(T).
\end {eqnarray}

\begin{lemma}\label{lemma2.2}
Assume a cubic element $T=[x_{i-1},x_{i}]\times[y_{j-1},y_{j}]\times[z_{s-1},z_{s}]\in {\cal T}_h$. Let the extension function $\S(v_{b})\in P_{1}(T)$ be defined in \eqref{a11}. There holds
\begin{equation}\label{a12}
\begin{split}
({\S}(v_{b})-v_{b})(M_{1}) =&({\S}(v_{b})-v_{b})(M_{2}) \\
=&\frac{|F_{3}|(v_{b3}+v_{b4})+|F_{5}|(v_{b5}+v_{b6})-(|F_{3}|+|F_{5}|)(v_{b1}+v_{b2})}{2(|F_{1}|+|F_{3}|+|F{5}|)},
\end{split}
\end{equation}
\begin{equation}\label{a13}
\begin{split}
({\S}(v_{b})-v_{b})(M_{3})=&({\S}(v_{b})-v_{b})(M_{4})  \\
=&\frac{|F_{1}|(v_{b1}+v_{b2})+|F_{5}|(v_{b5}+v_{b6})-(|F_{1}|+|F_{5}|)(v_{b3}+v_{b4})}{2(|F_{1}|+|F_{3}|+|F_{5}|)},
\end{split}
\end{equation}
\begin{equation}\label{a14}
\begin{split}
({\S}(v_{b})-v_{b})(M_{5})=&({\S}(v_{b})-v_{b})(M_{6})\\
=&\frac{|F_{1}|(v_{b1}+v_{b2})+|F_{3}|(v_{b3}+v_{b4})-(|F_{1}|+|F_{3}|)(v_{b5}+v_{b6})}{2(|F_{1}|+|F_{3}|+|F_{5}|)}.
\end{split}
\end{equation}
Furthermore, there holds
\begin{equation}\label{a15}
\begin{split}
|F_{1}|({\S}(v_{b})-v_{b})(M_{1})+|F_{3}|({\S}(v_{b})-v_{b})(M_{3})+|F_{5}|({\S}(v_{b})-v_{b})(M_{5})=0.
\end{split}
\end{equation}
\end{lemma}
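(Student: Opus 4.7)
The plan is to compute $\S(v_b)$ explicitly by expanding it in a convenient basis of $P_1(T)$ and then determining the four unknown coefficients from the four test equations obtained by choosing $\psi$ in the defining relation \eqref{a11}.

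First I would write
\begin{equation*}
\S(v_b)(x,y,z) = \alpha + \beta(x-x_c) + \gamma(y-y_c) + \delta(z-z_c),
\end{equation*}
so that its values at the face centers become $\S(v_b)(M_{1,2}) = \alpha \mp \beta |e_x|/2$, $\S(v_b)(M_{3,4}) = \alpha \mp \gamma |e_y|/2$, and $\S(v_b)(M_{5,6}) = \alpha \mp \delta |e_z|/2$. Next I would plug the basis functions $\psi=1,\,x-x_c,\,y-y_c,\,z-z_c$ into \eqref{a11}. Using $|F_1|=|F_2|$, $|F_3|=|F_4|$, $|F_5|=|F_6|$, the test $\psi=1$ yields
\begin{equation*}
\alpha = \frac{|F_{1}|(v_{b1}+v_{b2})+|F_{3}|(v_{b3}+v_{b4})+|F_{5}|(v_{b5}+v_{b6})}{2(|F_{1}|+|F_{3}|+|F_{5}|)},
\end{equation*}
while $\psi=x-x_c$, which vanishes at $M_3,\ldots,M_6$, gives $\beta|e_x|=v_{b2}-v_{b1}$, and analogously for $\gamma,\delta$.

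Then I would subtract the face values of $v_b$ to obtain, for instance,
\begin{equation*}
(\S(v_b)-v_b)(M_1) = \alpha - \tfrac{\beta|e_x|}{2} - v_{b1} = \alpha - \tfrac{v_{b1}+v_{b2}}{2},
\end{equation*}
and similarly $(\S(v_b)-v_b)(M_2)=\alpha-\tfrac{v_{b1}+v_{b2}}{2}$, which automatically gives the equality of the two face‑1/face‑2 values. Substituting the explicit $\alpha$ and combining over the common denominator $2(|F_1|+|F_3|+|F_5|)$ would then yield \eqref{a12}, and the same recipe applied in the $y$- and $z$-directions would produce \eqref{a13}–\eqref{a14}.

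Finally, for \eqref{a15}, I would observe that
\begin{equation*}
\sum_{p\in\{1,3,5\}}|F_p|(\S(v_b)-v_b)(M_p) = \alpha(|F_1|+|F_3|+|F_5|) - \tfrac{1}{2}\bigl[|F_1|(v_{b1}+v_{b2})+|F_3|(v_{b3}+v_{b4})+|F_5|(v_{b5}+v_{b6})\bigr],
\end{equation*}
which vanishes by the formula for $\alpha$; equivalently, \eqref{a15} is exactly the $\psi=1$ specialisation of \eqref{a11} divided by $2$. There is no real obstacle here: the whole proof is routine once the linear ansatz is adopted. The only mild subtlety worth flagging is the crucial pairing $|F_1|=|F_2|$, $|F_3|=|F_4|$, $|F_5|=|F_6|$, which is what forces $(\S(v_b)-v_b)(M_1)=(\S(v_b)-v_b)(M_2)$ and makes the weighted sum in \eqref{a15} collapse — this pairing is precisely the cubic-partition remnant of the symmetry that was fully present in two dimensions, so it is worth highlighting in the write-up.
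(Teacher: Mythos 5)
Your proposal is correct and follows essentially the same route as the paper's proof: expand $\S(v_b)$ in the basis $1,\,x-x_c,\,y-y_c,\,z-z_c$, determine the coefficients by testing \eqref{a11} with these same functions (using $|F_1|=|F_2|$, $|F_3|=|F_4|$, $|F_5|=|F_6|$), and evaluate $\S(v_b)-v_b$ at the face centers. Your explicit verification of \eqref{a15} as the $\psi=1$ relation divided by $2$ is a nice touch that the paper leaves implicit, but the argument is the same.
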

\begin{proof}
From the definition of the extension function ${\S}(v_{b})$, we have
\begin{equation}\label{extension-definition}
{\S}(v_{b})=c_{1}+c_{2}(x-x_{c})+c_{3}(y-y_{c})+c_{4}(z-z_{c}).
\end{equation}
Letting $\psi=1$ in \eqref{a11} gives
\begin{equation*}
\begin{split}
\sum_{p=1}^{6}\mid F_{p}\mid \Big(c_{1}+c_{2}(x_{p}^{*}-x_{c})+c_{3}(y_{p}^{*}-y_{c})+c_{4}(z_{p}^{*}-z_{c})\Big)=\sum_{p=1}^{6}\mid F_{p}\mid v_{bp},
\end{split}
\end{equation*}
which leads to
$$c_{1}=\frac{|F_{1}|(v_{b1}+v_{b2})+|F_{3}|(v_{b3}+v_{b4})+|F_{5}|(v_{b5}+v_{b6})}{2(|F_{1}|+|F_{3}|+|F_{5}|)}.$$
Similarly, setting $\psi=x-x_{c}$, $\psi=y-y_{c}$, and $\psi=z-z_{c}$ in \eqref{a11}  yields
$$c_{2}=\frac{v_{b2}-v_{b1}}{|e_x|}, \qquad  c_{3}=\frac{v_{b4}-v_{b3}}{|e_y|},\qquad c_{4}=\frac{v_{b6}-v_{b5}}{|e_z|}.$$
Therefore,  \eqref{extension-definition} can be rewriten as follows
\begin {eqnarray*}
{\S}(v_{b})&=&\frac{|F_{1}|(v_{b1}+v_{b2})+|F_{3}|(v_{b3}+v_{b4})+|F_{5}|(v_{b5}+v_{b6})}{2(|F_{1}|+|F_{3}|+|F_{5}|)}
+\frac{v_{b2}-v_{b1}}{|e_x|}(x-x_{c})\\&&+\frac{v_{b4}-v_{b3}}{|e_y|}(y-y_{c})
+\frac{v_{b6}-v_{b5}}{|e_z|}(z-z_{c}).
\end {eqnarray*}

Next, we compute ${\S}(v_{b})-v_{b}$ at the center $M_{p}$ of each flat face $F_{p}$ for $p=1, \cdots, 6$. At the center $M_{1}$ of the flat face $F_{1}$, we have
 \begin{eqnarray*}
\begin{split}
 ({\S}(v_{b})-v_{b})(M_{1})=&\frac{|F_{1}|(v_{b1}+v_{b2})+|F_{3}|(v_{b3}+v_{b4})+|F_{5}|(v_{b5}+v_{b6})}{2(|F_{1}|+|F_{3}|+|F_{5}|)}
-\frac{(v_{b2}-v_{b1})}{|e_x|}\frac{|e_x|}{2}-v_{b1} \\
=&\frac{|F_{3}|(v_{b3}+v_{b4})+|F_{5}|(v_{b5}+v_{b6})-(|F_{3}|+|F_{5}|)(v_{b1}+v_{b2})}{2(|F_{1}|+|F_{3}|+|F_{5}|)}.
\end{split}
 \end{eqnarray*}
Similarly, we obtain
\begin{equation*}
\begin{split}
 ({\S}(v_{b})-v_{b})(M_{2})
=&\frac{|F_{3}|(v_{b3}+v_{b4})+|F_{5}|(v_{b5}+v_{b6})-(|F_{3}|+|F_{5}|)(v_{b1}+v_{b2})}{2(|F_{1}|+|F_{3}|+|F_{5}|)},
\end{split}
\end{equation*}
 \begin{equation*}
\begin{split}
 ({\S}(v_{b})-v_{b})(M_{3})
=&\frac{|F_{1}|(v_{b1}+v_{b2})+|F_{5}|(v_{b5}+v_{b6})-(|F_{1}|+|F_{5}|)(v_{b3}+v_{b4})}{2(|F_{1}|+|F_{3}|+|F_{5}|)},
\end{split}
\end{equation*}
\begin{equation*}
\begin{split}
 ({\S}(v_{b})-v_{b})(M_{4})
=&\frac{|F_{1}|(v_{b1}+v_{b2})+|F_{5}|(v_{b5}+v_{b6})-(|F_{1}|+|F_{5}|)(v_{b3}+v_{b4})}{2(|F_{1}|+|F_{3}|+|F_{5}|)},
\end{split}
\end{equation*}
 \begin{equation*}
\begin{split}
 ({\S}(v_{b})-v_{b})(M_{5})
=&\frac{|F_{1}|(v_{b1}+v_{b2})+|F_{3}|(v_{b3}+v_{b4})-(|F_{3}|+|F_{1}|)(v_{b5}+v_{b6})}{2(|F_{1}|+|F_{3}|+|F_{5}|)},
\end{split}
\end{equation*}
\begin{equation*}
\begin{split}
 ({\S}(v_{b})-v_{b})(M_{6})
=&\frac{|F_{1}|(v_{b1}+v_{b2})+|F_{3}|(v_{b3}+v_{b4})-(|F_{3}|+|F_{1}|)(v_{b5}+v_{b6})}{2(|F_{1}|+|F_{3}|+|F_{5}|)}.
\end{split}
\end{equation*}
This completes the proof of the Lemma.
\end{proof}

We now focus on the two terms on the right-hand side of the error equation  \eqref{a9}, where the first term $\sum_{T\in\mathcal{T}_h}\langle(\textbf{q}-\mathbb{Q}_h\textbf{q})\cdot \mathbf{n},{\S}(v_{b})-v_{b}\rangle_{\partial T}$ is critical in the analysis. Lemma \ref{lemma2.2} indicates that ${\S}(v_{b})-v_{b}$ has the same value at the center of the flat faces $F_{1}$ and $F_{2}$. Moreover, ${\S}(v_{b})-v_{b}$ has the same directional derivative along the flat faces $F_{1}$ and $F_{2}$ which are $\frac{\partial({\S}(v_{b})-v_b)}{\partial y}=\nabla_d v_b \cdot \bn_4$ and $\frac{\partial({\S}(v_{b})-v_b)}{\partial z}=\nabla_d v_b \cdot \bn_6$, respectively. Hence, ${\S}(v_{b})-v_{b}$ has the same value along the flat faces $F_{1}$ and $F_{2}$ at the symmetric points $(x_{i-1}, y, z)$ and $(x_{i}, y, z)$. Likewise, ${\S}(v_{b})-v_{b}$ has the same value along the flat faces $F_{3}$ and $F_{4}$ at the symmetric points $(x, y_{j-1}, z)$ and $(x, y_{j}, z)$, and has the same value along the flat faces $F_{5}$ and $F_{6}$ at the symmetric points $(x, y, z_{s-1})$ and $(x, y, z_{s})$, respectively. It thus follows that
  \begin{eqnarray}\label{a16}
 \langle\mathbb{Q}_h\textbf{q}\cdot \mathbf{n},{\S}(v_{b})-v_{b}\rangle_{\partial T}=0.
\end{eqnarray}

Since ${\S}(v_{b})-v_{b}$ has the same value at the symmetric points $(x_{i-1}, y, z)$ and $(x_{i}, y, z)$, this boundary function on the flat faces $F_{1}$ and $F_{2}$ can be extended to the cubic element $T$ by assigning the value $({\S}(v_{b})-v_{b})(x_{i-1}, y, z)$ along each face parallel to the flat face $F_1$ (or $F_2$). Denote this extension by $\chi_{1}$; i.e.,
\begin{equation}\label{a17}
\chi_{1}(x, y, z):=({\S}(v_{b})-v_{b})(x_{i-1}, y, z),\qquad (x, y, z)\in T.
\end{equation}
Similarly, denote by $\chi_{2}$ the extension function of ${\S}(v_{b})-v_{b}$ to the cubic element $T$ by assuming the value $({\S}(v_{b})-v_{b})(x, y_{j-1}, z)$ along each face parallel to the flat face $F_3$ (or $F_4$); i.e.,
\begin{equation}\label{a18}
\begin{split}
\chi_{2}(x, y, z):=({\S}(v_{b})-v_{b})(x, y_{j-1}, z),\qquad (x, y, z)\in T.
\end{split}
\end{equation}
Likewise, we define
\begin{equation}\label{a19}
\begin{split}
\chi_{3}(x, y, z):=({\S}(v_{b})-v_{b}) (x, y, z_{s-1}), \qquad (x, y, z)\in T.
\end{split}
\end{equation}
It follows from Lemma \ref{lemma2.2} and \eqref{EQ:weak-gradient} that
 \begin{equation}\label{FPS}
\begin{split}
&\partial_{x}\chi_{1}=0, \quad \partial_{y}\chi_{1}=\nabla_d v_b \cdot \bn_4,\quad\partial_{z}\chi_{1}=\nabla_d v_b \cdot \bn_6,\\
& \partial_{y}\chi_{2}=0,\quad\partial_{x}\chi_{2}=\nabla_d v_b \cdot \bn_2,\quad \partial_{z}\chi_{2}=\nabla_d v_b \cdot \bn_6,\\
&\partial_{z}\chi_{3}=0,\quad\partial_{x}\chi_{3}=\nabla_d v_b \cdot \bn_2, \quad \partial_{y}\chi_{3}=\nabla_d v_b \cdot \bn_4,\\
&|F_{1}|\chi_{1}(M_{1})+|F_{3}|\chi_{2}(M_{3})+|F_{5}|\chi_{3}(M_{5})=0.\end{split}
\end{equation}

\begin{lemma}\label{lemm1.1}
Assume $u\in H^{3}(\Omega)$ is a given function. Let ${\cal T}_{h}=\Delta_{x}\times\Delta_{y}\times\Delta_{z}$ be a cubic partition. On each element $T\in {\cal T}_h$, for any $v_{b}\in V_{b}^0$, there holds
\begin{equation}\label{a21}
\begin{split}
&\langle{\color{black}{(\textbf{q}-\mathbb{Q}_h\textbf{q})}} \cdot \mathbf{n}, {\S}(v_{b})	-v_{b}\rangle_{\partial T} \\
=&{\color{black}{\chi_{1}(M_{1})\int_{T}q_{1x}dT+\chi_{2}(M_{3})\int_{T}q_{2y}dT+\chi_{3}(M_{5})\int_{T}q_{3z}dT}}+R_{1}(T),
\end{split}
\end{equation}
where $\chi_{i}$ $(i=1, 2, 3)$ are the extension functions defined in (\ref{a17})-(\ref{a19}), and
$\textbf{q}=A\nabla u=(q_1, q_2, q_3)'$, $q_1=a_{11}u_x+a_{12}u_y+a_{13}u_z$, $q_2=a_{21}u_x+a_{22}u_y+a_{23}u_z$, $q_3=a_{31}u_x+a_{32}u_y+a_{33}u_z$, $q_{1x}=\frac{\partial q_1}{\partial x}$, $q_{2y}=\frac{\partial q_2}{\partial y}$, $q_{3z}=\frac{\partial q_3}{\partial z}$, respectively.
The remainder term $R_{1}(T)$ satisfies the following estimate
\begin{eqnarray}\label{RE-1-1}
\sum_{T\in{\cal T}_{h}}|R_{1}(T)|\leq C h^{2} \|\textbf{q}\|_2 \|\nabla_d v_b  \|_0.
\end{eqnarray}

\end{lemma}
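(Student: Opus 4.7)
The plan is to exploit the identity \eqref{a16} together with the extension structure of $\chi_1,\chi_2,\chi_3$ to rewrite the face integral as three volume integrals, one per component of $\mathbf{q}$, and then isolate the centroid values $\chi_i(M_{2i-1})$ as the main part. First I would record that on the pair of faces $F_1,F_2$ (perpendicular to the $x$-axis), $\S(v_b)-v_b$ coincides with $\chi_1|_{F_1}=\chi_1|_{F_2}$ by Lemma \ref{lemma2.2} and \eqref{a17}, and similarly for the other two pairs. Applying the divergence theorem to the vector field $(q_1\chi_1,0,0)$ on $T$, the fluxes through $F_3,\ldots,F_6$ vanish and \eqref{FPS} gives $\partial_x\chi_1=0$, so
\begin{equation*}
\int_{F_2}q_1\chi_1\,dS-\int_{F_1}q_1\chi_1\,dS \;=\; \int_T q_{1x}\chi_1\,dT.
\end{equation*}
The analogous identities for $\chi_2,\chi_3$, combined with $\bn_1=-\bn_2$, $\bn_3=-\bn_4$, $\bn_5=-\bn_6$, yield
\begin{equation*}
\langle \mathbf{q}\cdot\bn,\S(v_b)-v_b\rangle_{\partial T} \;=\; \int_T q_{1x}\chi_1\,dT + \int_T q_{2y}\chi_2\,dT + \int_T q_{3z}\chi_3\,dT,
\end{equation*}
and the corresponding identity for $\mathbf{q}-\mathbb{Q}_h\mathbf{q}$ follows from \eqref{a16}.

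Next I would split $\chi_i = \chi_i(M_{2i-1}) + (\chi_i - \chi_i(M_{2i-1}))$ inside each volume integral. Because $\S(v_b)$ is linear and $v_b$ is constant on each face, $\chi_1-\chi_1(M_1)$ is linear in $y,z$ with no bilinear cross term; using \eqref{FPS} and $M_1=(x_{i-1},y_c,z_c)$,
\begin{equation*}
\chi_1-\chi_1(M_1) \;=\; (\nabla_d v_b\cdot\bn_4)(y-y_c) + (\nabla_d v_b\cdot\bn_6)(z-z_c),
\end{equation*}
and analogously for $\chi_2,\chi_3$. This produces the three leading terms in \eqref{a21} plus a remainder $R_1(T)$ composed of pieces such as $(\nabla_d v_b\cdot\bn_4)\int_T q_{1x}(y-y_c)\,dT$.

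Finally, I would estimate $R_1(T)$ by a Bramble--Hilbert/Poincar\'e argument. Since $\int_T(y-y_c)\,dT=0$, I may replace $q_{1x}$ by $q_{1x}-\overline{q_{1x}}$ (its mean on $T$), and Cauchy--Schwarz together with the Poincar\'e inequality gives
\begin{equation*}
\Big|\int_T q_{1x}(y-y_c)\,dT\Big| \;\leq\; \|q_{1x}-\overline{q_{1x}}\|_{L^2(T)}\|y-y_c\|_{L^2(T)} \;\leq\; C h^2|T|^{1/2}\|q_{1x}\|_{H^1(T)}.
\end{equation*}
Since $\nabla_d v_b$ is constant on $T$, $|\nabla_d v_b\cdot\bn_4|\,|T|^{1/2}=\|\nabla_d v_b\cdot\bn_4\|_{L^2(T)}\leq \|\nabla_d v_b\|_{L^2(T)}$, which yields the local bound $Ch^2\|\mathbf{q}\|_{H^2(T)}\|\nabla_d v_b\|_{L^2(T)}$ for each of the six pieces. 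Summing over $T\in\T_h$ and applying the discrete Cauchy--Schwarz inequality produces \eqref{RE-1-1}.

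The main obstacle I anticipate is making the flux-cancellation step exact: one must verify that $\chi_i$ agrees with $\S(v_b)-v_b$ on \emph{both} faces $F_{2i-1}$ and $F_{2i}$ (the symmetry noted just before \eqref{a16}) and that the four transverse faces contribute nothing, which is precisely the vanishing $\partial_i\chi_i=0$ from \eqref{FPS}. Once the exact volume representation is established, the remainder bound is standard, with the only care being to keep the $\nabla_d v_b$ factor in its natural $L^2$ norm rather than bounding it by a crude pointwise quantity.
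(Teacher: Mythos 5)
Your proposal is correct and follows essentially the same route as the paper: reduce to $\langle \mathbf{q}\cdot\bn,\S(v_b)-v_b\rangle_{\partial T}$ via \eqref{a16}, convert the face integrals to $\int_T q_{1x}\chi_1+q_{2y}\chi_2+q_{3z}\chi_3\,dT$ using $\partial_x\chi_1=\partial_y\chi_2=\partial_z\chi_3=0$, and then expand each $\chi_i$ about its value at $M_{2i-1}$ using \eqref{FPS}. The only (harmless) difference is in bounding the remainder: the paper integrates by parts once more, introducing the polynomials $E_{3i}$ that vanish at the face endpoints, whereas you exploit the mean-zero factor $(y-y_c)$ together with a Poincar\'e/Bramble--Hilbert argument; both yield the same bound $Ch^2\|\mathbf{q}\|_2\|\nabla_d v_b\|_0$.
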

\begin{proof}
Using the definition of $\chi_{i}$ in (\ref{a17})-(\ref{a19}), \eqref{a16}, and the usual integration by parts, we obtain
\begin{equation}\label{a20}
\begin{split}
&\langle{\color{black}{ (\textbf{q}-\mathbb{Q}_h\textbf{q})}}\cdot \mathbf{n},{\S}(v_{b})-v_{b}\rangle_{\partial T} \\
=&\langle{\color{black}{\textbf{q}}}\cdot \bn,{\S}(v_{b})-v_{b}\rangle_{\partial T} \\
=&-\int_{F_{1}}{\color{black}{q_1}}\chi_{1}dF+\int_{F_{2}}{\color{black}{q_1}}\chi_{1}dF-\int_{F_{3}}{\color{black}{q_2}}\chi_{2}dF\\&+\int_{F_{4}}{\color{black}{q_2}}\chi_{2}dF
-\int_{F_{5}}{\color{black}{q_3}}\chi_{3}dF+\int_{F_{6}}{\color{black}{q_3}}\chi_{3}dF \\
=&{\color{black}{\int_{T}q_{1x}\chi_{1}dT+\int_{T}q_{2y}\chi_{2}dT+\int_{T}q_{3z}\chi_{3}dT}}.
\end{split}
\end{equation}

Since $\chi{_{1}}$ is linear in both the $y$- direction and the $z$- direction, and is constant in the $x$- direction, we have
$$\chi{_{1}}(y,z)=\chi{_{1}}(M_{1})+\partial_{y}\chi{_{1}}(y-y_{c})+\partial_{z}\chi{_{1}}(z-z_{c}),$$
which, together with the usual integration by parts, gives
\begin{eqnarray*}
 \int_{T}q_{1x}\chi_{1}dT&=&\int_{T}q_{1x}\chi_{1}(M_{1})dT+\int_{T}q_{1x}\partial_{y}\chi_{1}(y-y_{c})dT+
 \int_{T}q_{1x}\partial_{z}\chi_{1}(z-z_{c})dT\\
 &=&\int_{T}q_{1x}\chi_{1}(M_{1})dT+\int_{T}q_{1xy}\partial_{y}\chi_{1}E_{31}(y)dT+
 \int_{T}q_{1xz}\partial_{z}\chi_{1}E_{32}(z)dT,
 \end{eqnarray*}
where $E_{31}(y)=\frac{1}{8}|e_y|^{2}-\frac{1}{2}(y-y_{c})^2$ and $E_{32}(z)=\frac{1}{8}|e_z|^{2}-\frac{1}{2}(z-z_{c})^2$.

Similarly, there holds
\begin{eqnarray*}
 \int_{T}q_{2y}\chi_{2}dT&=&\int_{T}q_{2y}\chi_{2}(M_{3})dT+\int_{T}q_{2y}\partial_{x}\chi_{2}(x-x_{c})dT+
 \int_{T}q_{2y}\partial_{z}\chi_{2}(z-z_{c})dT\\
 &=&\int_{T}q_{2y}\chi_{2}(M_{3})dT+\int_{T}q_{2yx}\partial_{x}\chi_{2}E_{41}(x)dT+
 \int_{T}q_{2yz}\partial_{z}\chi_{2}E_{42}(z)dT,
 \end{eqnarray*}
where $E_{41}(x)=\frac{1}{8}|e_x|^{2}-\frac{1}{2}(x-x_{c})^2$, $E_{42}(z)=\frac{1}{8}|e_z|^{2}-\frac{1}{2}(z-z_{c})^2$.

Likewise, we have
\begin{eqnarray*}
 \int_{T}q_{3z}\chi_{3}dT&=&\int_{T}q_{3z}\chi_{3}(M_{5})dT+\int_{T}q_{3z}\partial_{x}\chi_{3}(x-x_{c})dT+
 \int_{T}q_{3z}\partial_{y}\chi_{3}(y-y_{c})dT\\
 &=&\int_{T}q_{3z}\chi_{3}(M_{5})dT+\int_{T}q_{3zx}\partial_{x}\chi_{3}E_{51}(x)dT+
 \int_{T}q_{3zy}\partial_{y}\chi_{3}E_{52}(y)dT,
 \end{eqnarray*}
 where $E_{51}(x)=\frac{1}{8}|e_x|^{2}-\frac{1}{2}(x-x_{c})^2$,  and $E_{52}(y)=\frac{1}{8}|e_y|^{2}-\frac{1}{2}(y-y_{c})^2$.

Substituting the above three identities into \eqref{a20} gives
{\color{black}{\begin{eqnarray*}
&&\langle(\textbf{q}-\mathbb{Q}_h\textbf{q})\cdot \mathbf{n},{\S}(v_{b})-v_{b}\rangle_{\partial T}\nonumber \\
&=&\chi_{1}(M_{1})\int_{T}q_{1x}dT+\chi_{2}(M_{3})\int_{T}q_{2y}dT+\chi_{3}(M_{5})\int_{T}q_{3z}dT+R_{1}(T),
\end{eqnarray*}}}
where the remainder term $R_{1}(T)$ is given by
\begin{equation}\label{r1}
\begin{split}
R_{1}(T)=&\int_{T}q_{1xy}\partial_{y}\chi_{1}E_{31}(y)dT+\int_{T}q_{1xz}\partial_{z}\chi_{1}E_{32}(z)dT\\
&+\int_{T}q_{2yx}\partial_{x}\chi_{2}E_{41}(x)dT+
 \int_{T}q_{2yz}\partial_{z}\chi_{2}E_{42}(z)dT\\
&+\int_{T}q_{3zx}\partial_{x}\chi_{3}E_{51}(x)dT+
 \int_{T}q_{3zy}\partial_{y}\chi_{3}E_{52}(y)dT.
\end{split}
\end{equation}
Using the Cauchy-Schwarz inequality and \eqref{FPS}, there holds
\begin{eqnarray*}
\Big|\sum_{T\in {\cal T}_h}\int_{T}q_{1xy}\partial_{y}\chi{_{1}}E_{31}(y)dT\Big|&\leq& C h^2 (\sum_{T\in {\cal T}_h}\|\nabla^2q_1\|^2_{T})^{\frac{1}{2}}(\sum_{T\in {\cal T}_h}\|\partial_{y}\chi{_{1}}\|^2_{T})^{\frac{1}{2}} \\
&\leq&C h^2 \| q_1\|_2 \|\nabla_d v_b  \|_0.
\end{eqnarray*}
Each of the rest five terms in the remainder term $R_{1}(T)$ in (\ref{r1}) can be estimated in a similar way. This completes the proof of the Lemma.
\end{proof}

The following Lemma shall provide an estimate for the second term on the right-hand side of the error equation \eqref{a9}.

\begin{lemma}\label{lemma3.3}
Under the assumptions of Lemma \ref{lemm1.1}, there holds
\begin{equation}\label{a26}
\begin{split}
&\rho h^{-1} \langle Q_{b}{\S}(Q_{b}{\color{black}{u}})-Q_{b}{\color{black}{u}},Q_{b}{\S}(v_{b})-v_{b}\rangle_{\partial T} \\
=&-{\color{black}{A_{1}\rho  h^{-1}}} (|e_x|\chi_{1}(M_{1})\int_{T}{\color{black}{u_{xx}}}dT+|e_y|\chi_{2}(M_{3})\int_{T}{\color{black}{u_{yy}}}dT\\&+
|e_z|\chi_{3}(M_{5})\int_{T}{\color{black}{u_{zz}}}dT )+R_{2}(T),
\end{split}
\end{equation}
where $A_1=\frac{1}{6}$ and the remainder term $R_{2}(T)$ satisfies
\begin{eqnarray}\label{RE-1-2}
\sum_{T\in{\cal T}_{h}}|R_{2}(T)|\leq C h^{2} \| u\|_{3} \3bar{\S}(v_{b})-v_{b}\3bar_{\mathcal{E}_h}.
\end{eqnarray}
\end{lemma}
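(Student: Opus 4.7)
The plan is to reduce the boundary $L^{2}$-inner product on $\partial T$ to a weighted sum at the six face centers, use Lemma~\ref{lemma2.2} twice to write both factors in closed form, Taylor-expand the face averages of $u$, and then collapse the resulting linear combination via the symmetry identity \eqref{a15}. Since $Q_{b}$ projects onto $P_{0}(F)$ and $\S(v_{b}),\S(Q_{b}u)$ are linear (so their averages on the rectangular face $F_{p}$ equal their values at $M_{p}$), the inner product becomes
$$\langle Q_{b}\S(Q_{b}u)-Q_{b}u,\ Q_{b}\S(v_{b})-v_{b}\rangle_{\partial T}=\sum_{p=1}^{6}|F_{p}|\,a_{p}\,b_{p},$$
with $a_{p}=(\S(Q_{b}u)-Q_{b}u)(M_{p})$ and $b_{p}=(\S(v_{b})-v_{b})(M_{p})$. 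Lemma~\ref{lemma2.2} then supplies the pairings $a_{1}=a_{2}$, $a_{3}=a_{4}$, $a_{5}=a_{6}$ and $b_{1}=b_{2}=\chi_{1}(M_{1})$, $b_{3}=b_{4}=\chi_{2}(M_{3})$, $b_{5}=b_{6}=\chi_{3}(M_{5})$, reducing the sum to $2|F_{1}|a_{1}\chi_{1}(M_{1})+2|F_{3}|a_{3}\chi_{2}(M_{3})+2|F_{5}|a_{5}\chi_{3}(M_{5})$.

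The next step is to apply Lemma~\ref{lemma2.2} with the generic $v_{b}$ replaced by $Q_{b}u$; this needs the face averages $w_{bp}:=(Q_{b}u)|_{F_{p}}$ to second order. I would Taylor-expand $u$ about the cell center $M_{c}$ and average over $F_{p}$; odd-order monomials drop from the sums $w_{b1}+w_{b2}$, $w_{b3}+w_{b4}$, $w_{b5}+w_{b6}$, leaving only $u(M_{c})$ and the diagonal second-derivative contributions (with coefficient $|e_{\alpha}|^{2}/4$ when the averaging direction equals the derivative direction, and $|e_{\alpha}|^{2}/12$ when orthogonal). An integral Taylor remainder keeps the truncation at $O(h^{3})$ in the $H^{3}$ sense. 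A short subtraction then yields, for instance,
$$(w_{b3}+w_{b4})-(w_{b1}+w_{b2})=-\tfrac{|e_{x}|^{2}}{6}u_{xx}(M_{c})+\tfrac{|e_{y}|^{2}}{6}u_{yy}(M_{c})+O(h^{3}\|u\|_{3,T}),$$
and analogously for the other two differences. Inserting into Lemma~\ref{lemma2.2} gives $a_{1},a_{3},a_{5}$ as explicit linear combinations of $u_{xx}(M_{c}),u_{yy}(M_{c}),u_{zz}(M_{c})$.

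The decisive step is to regroup the resulting nine-term expression by derivative. The coefficient of $u_{xx}(M_{c})$ is proportional to
$$|e_{x}|^{2}|F_{1}|\bigl[-\chi_{1}(M_{1})(|F_{3}|+|F_{5}|)+|F_{3}|\chi_{2}(M_{3})+|F_{5}|\chi_{3}(M_{5})\bigr]\Big/\bigl[6(|F_{1}|+|F_{3}|+|F_{5}|)\bigr],$$
and this is precisely where identity \eqref{a15} intervenes: substituting $|F_{3}|\chi_{2}(M_{3})+|F_{5}|\chi_{3}(M_{5})=-|F_{1}|\chi_{1}(M_{1})$ collapses the bracket to $-(|F_{1}|+|F_{3}|+|F_{5}|)\chi_{1}(M_{1})$, which cancels the denominator and leaves $-\tfrac{1}{6}|F_{1}||e_{x}|^{2}\chi_{1}(M_{1})=-\tfrac{1}{6}|T||e_{x}|\chi_{1}(M_{1})$ (using $|F_{1}||e_{x}|=|T|$). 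Analogous collapses for the $u_{yy}$ and $u_{zz}$ coefficients produce the claimed diagonal leading term with $A_{1}=\tfrac{1}{6}$. Replacing $|T|u_{xx}(M_{c})$ by $\int_{T}u_{xx}\,dT$ introduces a midpoint-rule error of order $h^{2}\|u\|_{3,T}$; combined with the Taylor truncation errors in the $a_{p}$ this defines $R_{2}(T)$, and Cauchy-Schwarz over $\mathcal{T}_{h}$ matched against the definition of $\3bar\S(v_{b})-v_{b}\3bar_{\mathcal{E}_{h}}$ yields the stated bound.

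The main obstacle is this algebraic collapse: one must recognize that identity \eqref{a15} is exactly the ingredient that kills off-diagonal cross-terms such as $\chi_{1}(M_{1})u_{yy}(M_{c})$, which would otherwise spoil the diagonal form of the statement. This three-term identity is the three-dimensional substitute for the simpler symmetry available on rectangular two-dimensional partitions in \cite{sup_LWW2018}; without it, no clean superconvergent pairing between $\chi_{i}$ and $u_{\alpha\alpha}$ would be available.
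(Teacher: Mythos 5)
Your proposal is correct in substance and lands on exactly the right algebra (the $1/6$ constant, the pairing $a_p,b_p$ at face centers, and the collapse of cross terms via \eqref{a15}), but it is organized differently from the paper. The paper first uses the defining property \eqref{a7} of the extension $\S$ to reduce the left-hand side to $-\rho h^{-1}\langle Q_b u,\S(v_b)-v_b\rangle_{\partial T}$, so Lemma \ref{lemma2.2} is applied only to the $v_b$-factor; identity \eqref{a15} is then used once to reduce to two opposite-face pair differences, which are converted to volume integrals by exact Euler--MacLaurin identities with integral (third-derivative) remainders, and \eqref{a15} is used a second time to restore the diagonal form. You instead keep both factors, apply Lemma \ref{lemma2.2} twice (once with $v_b$ replaced by the face averages of $u$), Taylor-expand about the cell center, and collapse with \eqref{a15}; your computed coefficient of $u_{xx}$ and its reduction to $-\tfrac16 |T|\,|e_x|\chi_1(M_1)$ agree with the paper's outcome, so the two routes are algebraically equivalent. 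What the paper's route buys is that the $u$-dependence enters only through exact face/volume integral identities, so no pointwise values of derivatives are ever needed and the argument is valid verbatim for $u\in H^3$.

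That is also the one soft spot in your write-up: for $u\in H^3(\Omega)$ in three dimensions, $u_{xx},u_{yy},u_{zz}$ lie only in $H^1(T)$, so the pointwise values $u_{\alpha\alpha}(M_c)$ and a classical Taylor remainder are not available, and the claimed $O(h^2)$ accuracy of the midpoint replacement $|T|u_{\alpha\alpha}(M_c)\to\int_T u_{\alpha\alpha}\,dT$ would anyway require $H^4$. The fix is standard and does not change your structure: work with the cell averages $\frac{1}{|T|}\int_T u_{\alpha\alpha}\,dT$ from the start (averaged Taylor/Bramble--Hilbert, or the paper's Euler--MacLaurin identities), which eliminates the midpoint step; even if one keeps it, a first-order Poincar\'e-type bound $\int_T|u_{\alpha\alpha}-\overline{u_{\alpha\alpha}}|\le Ch^{5/2}\|u\|_{3,T}$ suffices, because after multiplying by $\rho h^{-1}|e_\alpha||\chi_i(M)|\lesssim \rho|\,\chi_i(M)|$, bounding $|\chi_i(M)|\lesssim h^{-1}\|Q_b\S(v_b)-v_b\|_{0,\partial T}$, and summing with Cauchy--Schwarz against $\3bar\S(v_b)-v_b\3bar_{\mathcal{E}_h}$, one still obtains the stated $Ch^2\|u\|_3\3bar\S(v_b)-v_b\3bar_{\mathcal{E}_h}$ bound; the same accounting legitimizes your per-element truncation terms, whose correct size in the $H^3$ setting is $O(h^{3/2}\|u\|_{3,T})$ rather than $O(h^{3})$.
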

Here we define
\begin{eqnarray}\label{RE-2018-6-3}
\3bar{\S}(v_{b})-v_{b}\3bar^{2}_{\mathcal{E}_h}:=\rho h^{-1}\sum_{T\in{\cal T}_{h}}\langle Q_{b}{\S}(v_{b})-v_{b},Q_{b}{\S}(v_{b})-v_{b}\rangle_{\partial T}.
\end{eqnarray}
\begin{proof}
Using \eqref{a7}, (\ref{a17})-(\ref{a19}), Lemma \ref{lemma2.2}, and (\ref{FPS}), we have
\begin{equation}\label{a22}
\begin{split}
&\rho h^{-1}\langle Q_{b}{\S}(Q_{b}u)-Q_{b}u,Q_{b}{\S}(v_{b})-v_{b}\rangle_{\partial T} \\
=&-\rho h^{-1}\langle Q_{b}u,{\S}(v_{b})-v_{b}\rangle_{\partial T} \\
=&-\rho h^{-1}\Big(|F_{1}|\chi_{1}(M_{1})Q_{b}u(M_1)+|F_{2}|\chi_{1}(M_{2})Q_{b}u(M_2)\\&+|F_{3}|\chi_{2}(M_{3})Q_{b}u(M_3)
+|F_{4}|\chi_{2}(M_{4})Q_{b}u(M_4)\\&+|F_{5}|\chi_{3}(M_{5})Q_{b}u(M_5)+|F_{6}|\chi_{3}(M_{6})Q_{b}u(M_6)\Big) \\
=&-\rho h^{-1}|F_{1}|\chi_{1}(M_{1})\Big(Q_{b}u(M_1)+Q_{b}u(M_2)-Q_{b}u(M_5)-Q_{b}u(M_6)\Big) \\
&-\rho h^{-1}|F_{3}|\chi_{2}(M_{3})\Big(Q_{b}u(M_3)+Q_{b}u(M_4)-Q_{b}u(M_5)-Q_{b}u(M_6)\Big).\\
\end{split}
\end{equation}
Note that $Q_{b}u|_{F_i}=\frac{1}{|F_i|}\int_{F_i}udF$ is the average of $u$ on the flat face $F_i$. Using the Euler-MacLaurin formula gives
\begin{equation}\label{a23}
\begin{split}
&|F_{1}|\Big(Q_{b}u(M_{1})+Q_{b}u(M_{2})-Q_{b}u(M_{5})-Q_{b}u(M_{6})\Big) \\
=&\int_{F_1} u(x_{i-1},y,z)dF+\int_{F_2}u(x_{i},y,z)dF \\
&-\frac{|F_1|}{|F_5|}\Big(\int_{F_5}u(x,y, z_{s-1})dF +\int_{F_6}u(x,y, z_{s})dF\Big)\\
=&\frac{1}{|e_x|}\Big(2\int_{T}u(x,y,z)dT+A_{1}|e_x|^{2}\int_{T}u_{xx}dT+\frac{1}{24}|e_x|^{3}\int_{T}u_{xxx}E_{1}(x)dT  \Big)\\
&-\frac{1}{|e_z|}\cdot\frac{|F_1|}{|F_5|}\Big(2\int_{T}u(x,y,z)dT+A_{1}|e_z|^{2}\int_{T}u_{zz}dT\\
&+\frac{1}{24}|e_z|^{3}\int_{T}u_{zzz}E_{3}(z)dT\Big)\\
=&A_1\Big(|e_x|\int_{T}u_{xx}dT-\frac{|e_z|^{2}}{|e_x|}\int_{T}u_{zz}dT\Big)\\
&+\frac{1}{24}\Big(|e_x|^{2}\int_{T}u_{xxx}E_{1}(x)dT-\frac{|e_z|^{3}}{|e_x|}u_{zzz}E_{3}(z)dT\Big),
\end{split}
\end{equation}
where $E_{1}(x)$ and $E_{3}(z)$ are the cubic polynomials in both the $x$- direction and the $z$- direction.

Similarly, we arrive at
\begin{equation}\label{a24}
\begin{split}
&|F_{3}|\Big(Q_{b}u(M_{3})+Q_{b}u(M_{4})-Q_{b}u(M_{5})-Q_{b}u(M_{6})\Big) \\
=&A_{1}|e_y|\int_{T}u_{yy}dT-A_{1}\frac{|e_z|^{2}}{|e_y|}\int_{T}u_{zz}dT
\\&+\frac{{|e_y|} ^{2}}{24}\int_{T}u_{yyy}E_{2}(y)dT-\frac{|e_z|^{3}}{24|e_y|}\int_{T}u_{zzz}E_{3}(z)dT,
\end{split}
\end{equation}
where $E_{2}(y)$ is the cubic polynomial in the $y$- direction.

Substituting \eqref{a23} - \eqref{a24} into \eqref{a22} and using (\ref{FPS}), we have
\begin{equation*}
\begin{split}
&\rho h^{-1}\langle Q_{b}{\S}(Q_{b}u)-Q_{b}u,Q_{b}{\S}(v_{b})-v_{b}\rangle_{\partial T} \\
=&-\rho h^{-1}\chi_{1}(M_{1})\Big(A_{1}|e_x|\int_{T}u_{xx}dT-A_{1}\frac{|e_z|^{2}}{|e_x|}\int_{T}u_{zz}dT\\&+
\frac{|e_x|^{2}}{24}\int_{T}u_{xxx}E_{1}(x)dT  -\frac{|e_z|^{3}}{24|e_x|}\int_{T}u_{zzz}E_{3}(z)dT\Big)\\&-\rho h^{-1}\chi_{2}(M_{3})\Big(A_{1}|e_y|\int_{T}u_{yy}dT-A_{1}\frac{|e_z|^{2}}{|e_y|}\int_{T}u_{zz}dT \\& +
\frac{|e_y|^{2}}{24}\int_{T}u_{yyy}E_{2}(y)dT-\frac{|e_z|^{3}}{24|e_y|}\int_{T}u_{zzz}E_{3}(z)dT\Big) \\
=&-A_{1}\rho h^{-1}\Big(|e_x|\chi_{1}(M_{1})\int_{T}u_{xx}dT+|e_y|\chi_{2}(M_{3})\int_{T}u_{yy}dT\\&+|e_z|\chi_{3}(M_{5})\int_{T}u_{zz}dT\Big) +R_{2}(T),
\end{split}
\end{equation*}
where the remainder $R_{2}(T)$ is given by
\begin{eqnarray*}
R_{2}(T)&=&-\frac{1}{24}\rho h^{-1}\Big(|e_{x} |^{2}\chi_{1}(M_{1})\int_{T}u_{xxx}E_{1}(x)dT
\\ &&+|e_{y}|^{2}\chi_{2}(M_{3})\int_{T}u_{yyy}E_{2}(y)dT +|e_{z}|^{2}\chi_{3}(M_{5})\int_{T}u_{zzz}E_{3}(z)dT\Big).
\end{eqnarray*}

Similar to the proof of \eqref{RE-1-1}, it is easy to arrive at \eqref{RE-1-2}. This completes the proof of the Lemma.
\end{proof}

\section{Superconvergence}\label{Section:SWG}
In this section, we shall establish the superconvergence error estimates for the simplified weak Galerkin finite element scheme (\ref{SWG}) for solving the three dimensional second order model problem \eqref{model} on the cubic partitions.

\begin{theorem}\label{superconvergence-H-1-1}
Assume that $u\in H^{3}(\Omega)$ is the exact solution of the second order elliptic model problem \eqref{model} in three dimensions. Let $u_{b}\in V_{b}$ be the weak Galerkin finite element solution arising from the simplified WG scheme \eqref{SWG} satisfying the boundary condition $u_{b}=Q_{b}g$ on $\partial\Omega$. On each cubic element $T    =[x_{i-1},x_{i}]\times[y_{j-1},y_{j}]\times [z_{s-1},z_{s}] \in {\cal T}_h$, we define $w_{b}\in V_{b}$ as follows
\begin{equation*}
w_{b}=\left\{
\begin{array}{lr}
      \frac{1}{12}\rho^{-1}h^{-1}\left(\rho h^{-1}(|e_y|^{2}Q_{b}u_{yy}|_{F_{1}}+|e_z|^{2}Q_{b}u_{zz}|_{F_{1}})-
      6(|e_y|Q_{b}q_{2y}|_{F_{1}}+|e_z|Q_{b}q_{3z}|_{F_{1}})\right),  \mbox{on } F_{1},\\
     \frac{1}{12}\rho^{-1}h^{-1}\left(\rho h^{-1}(|e_y|^{2}Q_{b}u_{yy}|_{F_{2}}+|e_z|^{2}Q_{b}u_{zz}|_{F_{2}})-
      6(|e_y|Q_{b}q_{2y}|_{F_{2}}+|e_z|Q_{b}q_{3z}|_{F_{2}})\right), \mbox{on } F_{2},\\
      \frac{1}{12}\rho^{-1}h^{-1}\left(\rho h^{-1}(|e_x|^{2}Q_{b}u_{xx}|_{F_{3}}+|e_z|^{2}Q_{b}u_{zz}|_{F_{3}})-
      6(|e_x|Q_{b}q_{1x}|_{F_{3}}+|e_z|Q_{b}q_{3z}|_{F_{3}})\right), \mbox{on } F_{3},\\
     \frac{1}{12}\rho^{-1}h^{-1}\left(\rho h^{-1}(|e_x|^{2}Q_{b}u_{xx}|_{F_{4}}+|e_z|^{2}Q_{b}u_{zz}|_{F_{4}})-
      6(|e_x|Q_{b}q_{1x}|_{F_{4}}+|e_z|Q_{b}q_{3z}|_{F_{4}})\right),  \mbox{on } F_{4},\\
      \frac{1}{12}\rho^{-1}h^{-1}\left(\rho h^{-1}(|e_y|^{2}Q_{b}u_{yy}|_{F_{5}}+|e_x|^{2}Q_{b}u_{xx}|_{F_{5}})-
      6(|e_y|Q_{b}q_{2y}|_{F_{5}}+|e_x|Q_{b}q_{1x}|_{F_{5}})\right),   \mbox{on } F_{5},\\
       \frac{1}{12}\rho^{-1}h^{-1}\left(\rho h^{-1}(|e_y|^{2}Q_{b}u_{yy}|_{F_{6}}+|e_x|^{2}Q_{b}u_{xx}|_{F_{6}})-
      6(|e_y|Q_{b}q_{2y}|_{F_{6}}+|e_x|Q_{b}q_{1x}|_{F_{6}})\right),  \mbox{on } F_{6}.
\end{array}
\right.
\end{equation*}
Let $\widetilde{e_{b}}=(Q_{b}u -u_{b})+h^{2}w_{b}$ be the modified error function. For any $v_{b}\in V_{b}^{0}$, the error function $\widetilde{e_{b}}$ satisfies
\begin{equation}\label{EQ:marker:new}
\begin{split}
&({\color{black}{A}}\nabla_{d}\widetilde{e_{b}},\nabla_{d}v_{b})_h+\rho h^{-1}\sum_{T\in{\cal T}_{h}}\langle Q_{b}{\S}(\widetilde{e_{b}})-\widetilde{e_{b}},Q_{b}{\S}(v_{b})-v_{b}\rangle_{\partial T}     \\
=&h^{2}({\color{black}{A}}\nabla_{d}w_{b},\nabla_{d}v_{b})_h+R_{4}(v_b),
\end{split}
\end{equation}
where $R_{4}(v_b)$ is the remainder satisfying
\begin{eqnarray}\label{a27}
|R_{4}(v_b)|\leq C h^{2} \| u\|_{3} \3bar{\S}(v_{b})-v_{b}\3bar_{\mathcal{E}_h}.
\end{eqnarray}
\end{theorem}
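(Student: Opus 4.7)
The strategy is to invoke the error equation of Lemma~\ref{lemma equation}, expand its right-hand side via Lemmas~\ref{lemm1.1} and~\ref{lemma3.3}, and then verify that the explicit boundary function $w_b$ has been designed precisely so that its stabilization contribution cancels the leading part of $\zeta_u(v_b)$ up to a genuine $O(h^2)$ remainder. Using linearity in the first slot of $({A}\nabla_d\cdot,\nabla_d v_b)_h$ and of the stabilizer, I would split $\widetilde{e_b}=e_b+h^2 w_b$; Lemma~\ref{lemma equation} collapses the $e_b$-pieces to $\zeta_u(v_b)$, and the $h^2({A}\nabla_d w_b,\nabla_d v_b)_h$ term is exactly what the right-hand side of \eqref{EQ:marker:new} demands. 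What remains to prove is therefore
\begin{equation*}
R_4(v_b)\;=\;\zeta_u(v_b)\;+\;h\rho\sum_{T\in\T_h}\l Q_b\S(w_b)-w_b,\,Q_b\S(v_b)-v_b\r_{\pT},
\end{equation*}
together with the bound \eqref{a27}.

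Next, Lemmas~\ref{lemm1.1} and~\ref{lemma3.3} rewrite $\zeta_u(v_b)$ as an element sum of the three ``leading'' products
\begin{equation*}
\chi_1(M_1)\!\int_T\!(q_{1x}-A_1\rho h^{-1}|e_x|u_{xx})\,dT+\chi_2(M_3)\!\int_T\!(q_{2y}-A_1\rho h^{-1}|e_y|u_{yy})\,dT+\chi_3(M_5)\!\int_T\!(q_{3z}-A_1\rho h^{-1}|e_z|u_{zz})\,dT,
\end{equation*}
plus the remainders $R_1(T),R_2(T)$; by \eqref{RE-1-1}--\eqref{RE-1-2} (together with the norm comparison $\|\nabla_d v_b\|_0\leqC\3bar\S(v_b)-v_b\3bar_{\mathcal E_h}$, which follows from \eqref{EQ:weak-gradient} and Lemma~\ref{lemma2.2}) these remainders already have the size claimed in \eqref{a27} and may be absorbed into $R_4$. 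For the $w_b$-contribution, $w_b\in V_b$ is a pure boundary function, so the derivation underlying \eqref{L2} gives $\l Q_b\S(w_b)-w_b,\,Q_b\S(v_b)-v_b\r_{\pT}=-\l w_b,\,\S(v_b)-v_b\r_{\pT}$; combined with Lemma~\ref{lemma2.2} this rewrites the boundary integral as $-\sum_{i=1}^{3}|F_{2i-1}|(w_b|_{F_{2i-1}}+w_b|_{F_{2i}})\chi_i(M_{2i-1})$, and substituting the piecewise definition of $w_b$ produces element-level terms of the same $\chi_i(M_{2i-1})\cdot(\text{face average})$ shape as the leading terms above.

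The heart of the argument---and the main obstacle---is the explicit algebraic cancellation between the two contributions. The difficulty is structural: on the face pair $(F_1,F_2)$ the $\chi_1(M_1)$-block of $\zeta_u$ carries the normal-direction integrands $q_{1x},u_{xx}$, whereas $w_b$ on $F_1\cup F_2$ supplies tangential-direction integrands $q_{2y},q_{3z},u_{yy},u_{zz}$, and analogously for the other two blocks. I would reconcile them by (i) applying a midpoint/Euler--MacLaurin identity as in the proof of Lemma~\ref{lemma3.3} to trade face averages of $u_{yy},u_{zz},q_{2y},q_{3z}$ for volume averages over $T$, up to $O(h^2\|u\|_3)$ commutators absorbed into $R_4$; (ii) using the PDE $q_{1x}+q_{2y}+q_{3z}=-f$ to rewrite each normal flux derivative in tangential form plus an $f$-contribution that is compatible with the constraint \eqref{a15}; and (iii) invoking the identity $|F_1|\chi_1(M_1)+|F_3|\chi_2(M_3)+|F_5|\chi_3(M_5)=0$ from Lemma~\ref{lemma2.2} to redistribute tangential contributions across the three $\chi_i$-blocks. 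The numerical coefficient $\tfrac{1}{12}\rho^{-1}h^{-1}$ in $w_b$ is chosen so that the product $h\rho\cdot\tfrac{1}{12}\rho^{-1}h^{-1}=\tfrac{1}{12}$ pairs with $A_1=\tfrac{1}{6}$ from Lemma~\ref{lemma3.3} to close the cancellation. Every surviving residue is an $O(h^2\|u\|_3)$ expression, and Cauchy--Schwarz together with local trace/inverse inequalities on the cubic element bounds it by $Ch^2\|u\|_3\3bar\S(v_b)-v_b\3bar_{\mathcal E_h}$; summing over $T\in\T_h$ delivers \eqref{a27}.
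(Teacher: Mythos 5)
Your skeleton coincides with the paper's proof: start from Lemma \ref{lemma equation}, expand $\zeta_u(v_b)$ by Lemmas \ref{lemm1.1} and \ref{lemma3.3}, reduce the claim to $R_4(v_b)=\zeta_u(v_b)+\rho h\sum_{T}\l Q_b\S(w_b)-w_b,Q_b\S(v_b)-v_b\r_{\pT}$, rewrite the $w_b$-term through \eqref{a7} and Lemma \ref{lemma2.2} as $-\sum_i|F_{2i-1}|\chi_i(M_{2i-1})(w_b|_{F_{2i-1}}+w_b|_{F_{2i}})$, and close with the $\tfrac1{12}$ versus $A_1=\tfrac16$ bookkeeping. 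The genuine flaw is your step (ii). The paper never uses the PDE at this point, and it cannot be used the way you describe on non-uniform Cartesian meshes. After your step (i) converts the face averages in the $w_b$-contribution to volume integrals, grouping by the six quantities $\int_Tq_{1x},\int_Tq_{2y},\int_Tq_{3z},\int_Tu_{xx},\int_Tu_{yy},\int_Tu_{zz}$ shows that each carries a coefficient proportional to $|F_1|\chi_1(M_1)+|F_3|\chi_2(M_3)+|F_5|\chi_3(M_5)$ (for instance the $\int_Tq_{1x}\,dT$ coefficient is $\chi_1(M_1)+\tfrac{|e_x|}{|e_y|}\chi_2(M_3)+\tfrac{|e_x|}{|e_z|}\chi_3(M_5)=\tfrac{|e_x|}{|T|}\big(|F_1|\chi_1(M_1)+|F_3|\chi_2(M_3)+|F_5|\chi_3(M_5)\big)$), so \eqref{a15}/\eqref{FPS} alone kills the leading terms exactly; this is the content of the paper's passage to \eqref{error-6-22:09}, organized in the opposite direction (volume to face, then redistribution). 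By contrast, substituting $q_{1x}=-f-q_{2y}-q_{3z}$ produces the tangential derivatives with unit coefficients rather than the anisotropic ratios $|e_y|/|e_x|$, $|e_z|/|e_x|$ required when $|e_x|,|e_y|,|e_z|$ differ, and it leaves per-element terms $-\int_Tf\,dT\,\big(\chi_1(M_1)+\chi_2(M_3)+\chi_3(M_5)\big)$; because $|e_x||F_1|=|e_y||F_3|=|e_z||F_5|=|T|$, this is the \emph{unweighted} sum of the $\chi_i$, which \eqref{a15} does not annihilate unless $|F_1|=|F_3|=|F_5|$, and after Cauchy--Schwarz it contributes only at order $h\,\3bar\S(v_b)-v_b\3bar_{\mathcal{E}_h}$, one power short of \eqref{a27}. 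So (ii) must be dropped; steps (i) and (iii) are the whole mechanism.

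A second genuine error is the parenthetical claim $\|\nabla_dv_b\|_0\lesssim\3bar\S(v_b)-v_b\3bar_{\mathcal{E}_h}$. This is false: if $v_b$ consists of the face averages of a linear function then $Q_b\S(v_b)-v_b\equiv0$ while $\nabla_dv_b\neq0$, and for face averages of a smooth function the stabilizer seminorm is $O(h)$ while $\|\nabla_dv_b\|_0=O(1)$; the stabilizer cannot control the weak gradient. You need this comparison to push the $R_1$-type remainders, which Lemma \ref{lemm1.1} bounds by $Ch^2\|\textbf{q}\|_2\|\nabla_dv_b\|_0$, into a bound containing only $\3bar\S(v_b)-v_b\3bar_{\mathcal{E}_h}$. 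The workable statement is $|R_4(v_b)|\le Ch^{2}\|u\|_{3}\big(\|\nabla_dv_b\|_0+\3bar\S(v_{b})-v_{b}\3bar_{\mathcal{E}_h}\big)$, which is exactly the form used downstream in the proof of Theorem \ref{superconvergence-H-1-2} (both norms appear before the final absorption) and suffices for the superconvergence conclusion; keep both terms instead of invoking a norm equivalence that fails.
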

\begin{proof}
It follows from \eqref{a10}, Lemma \ref{lemm1.1} and Lemma \ref{lemma3.3} that
\begin{equation}\label{a28}
\begin{split}
\zeta_{u}(v_{b})=&-\sum_{T\in{\cal T}_{h}}A_{1}\rho h^{-1}|e_x|\chi_{1}(M_{1})\int_{T}u_{xx}dT+\sum_{T\in{\cal T}_{h}}\chi_{1}(M_{1})\int_{T}q_{1x}dT\\
&-\sum_{T\in{\cal T}_{h}}A_{1}\rho h^{-1}|e_y|\chi_{2}(M_{3})\int_{T}u_{yy}dT+\sum_{T\in{\cal T}_{h}}\chi_{2}(M_{3})\int_{T}q_{2y}dT \\
&-\sum_{T\in{\cal T}_{h}}A_{1}\rho h^{-1}|e_z|\chi_{3}(M_{5})\int_{T}u_{zz}dT+\sum_{T\in{\cal T}_{h}}\chi_{3}(M_{5})\int_{T}q_{3z}dT\\
&+\sum_{T\in{\cal T}_{h}}(R_{1}(T)+R_{2}(T)).
\end{split}
\end{equation}

Using the usual integration by parts yields
\begin{eqnarray*}
\int_{T}u_{xx}dT&=&-\int_{T}u_{xxy}(y-y_{c})dT+\frac{1}{2}|e_y|\int_{F_{3}} u_{xx}dF+\frac{1}{2} |e_y|\int_{F_{4}}u_{xx}dF \nonumber \\
&=&-\int_{T}u_{xxy}(y-y_{c})dT+\frac{1}{2}|e_y||F_{3}|Q_{b}u_{xx}|_{F_{3}}+\frac{1}{2}|e_y||F_{4}|Q_{b}u_{xx}|_{F_{4}}.
\end{eqnarray*}

Similarly, we have
{\color{black}{\begin{equation*}
\begin{split}
&\int_{T}u_{yy}dT=-\int_{T}u_{yyz}(z-z_{c})dT+\frac{1}{2}|e_z||F_{5}|Q_{b}u_{yy}|_{F_{5}}+\frac{1}{2}|e_z||F_{6}|Q_{b}u_{yy}|_{F_{6}},\\
&\int_{T}u_{zz}dT=-\int_{T}u_{zzx}(x-x_{c})dT+\frac{1}{2}|e_x||F_{1}|Q_{b}u_{zz}|_{F_{1}}+\frac{1}{2}|e_x||F_{2}|Q_{b}u_{zz}|_{F_{2}}.
\end{split}
\end{equation*}}}

Likewise, there holds
{\color{black}{\begin{equation*}
\begin{split}
&\int_{T}q_{1x}dT=-\int_{T}q_{1xy}(y-y_{c})dT+\frac{1}{2}|e_y||F_{3}|Q_{b}q_{1x}|_{F_{3}}+\frac{1}{2}|e_y||F_{4}|Q_{b}q_{1x}|_{F_{4}},\\
&\int_{T}q_{2y}dT=-\int_{T}q_{2yz}(z-z_{c})dT+\frac{1}{2}|e_z||F_{5}|Q_{b}q_{2y}|_{F_{5}}+\frac{1}{2}|e_z||F_{6}|Q_{b}q_{2y}|_{F_{6}},\\
&\int_{T}q_{3z}dT=-\int_{T}q_{3zx}(x-x_{c})dT+\frac{1}{2}|e_x||F_{1}|Q_{b}q_{3z}|_{F_{1}}+\frac{1}{2}|e_x||F_{2}|Q_{b}q_{3z}|_{F_{2}}.
\end{split}
\end{equation*}}}

Substituting the above identities into \eqref{a28} gives rise to
{\color{black}{\begin{equation}\label{error-6-22:06}
\begin{split}
\zeta_{u}(v_{b})=&-\frac{A_1}{2}\sum_{T\in{\cal T}_{h}}\rho h^{-1}|e_x||e_y||F_3|\chi_{1}(M_{1})(Q_{b}u_{xx}|_{F_{3}}+Q_{b}u_{xx}|_{F_{4}}) \\
&-\frac{A_1}{2}\sum_{T\in{\cal T}_{h}}\rho h^{-1}|e_y||e_z||F_{5}|\chi_{2}(M_{3})(Q_{b}u_{yy}|_{F_{5}}+Q_{b}u_{yy}|_{F_{6}}) \\
&-\frac{A_1}{2}\sum_{T\in{\cal T}_{h}}\rho h^{-1}|e_z||e_x||F_{1}|\chi_{3}(M_{5})(Q_{b}u_{zz}|_{F_{1}}+Q_{b}u_{zz}|_{F_{2}}) \\
&+\frac{1}{2}\sum_{T\in{\cal T}_{h}}|e_y||F_{3}|\chi_{1}(M_{1})(Q_{b}q_{1x}|_{F_{3}}+Q_{b}q_{1x}|_{F_{4}}) \\
&+\frac{1}{2}\sum_{T\in{\cal T}_{h}}|e_z||F_{5}|\chi_{2}(M_{3})(Q_{b}q_{2y}|_{F_{5}}+Q_{b}q_{2y}|_{F_{6}}) \\
&+\frac{1}{2}\sum_{T\in{\cal T}_{h}}|e_x||F_{1}|\chi_{3}(M_{5})(Q_{b}q_{3z}|_{F_{1}}+Q_{b}q_{3z}|_{F_{2}})+\sum_{T\in{\cal T}_{h}}R_{3}(T),
\end{split}
\end{equation}}}
where the remainder term $R_{3}(T)$ is given by
{\color{black}{\begin{equation*}\label{a29}
\begin{split}
R_{3}(T)=&A_{1}\rho h^{-1}|e_x|\chi_{1}(M_{1})\int_{T}u_{xxy}(y-y_{c})dT-\chi_{1}(M_{1})\int_{T}q_{1xy}(y-y_{c})dT\\
&+A_{1}\rho h^{-1}|e_y|\chi_{2}(M_{3})\int_{T}u_{yyz}(z-z_{c})dT-\chi_{2}(M_{3})\int_{T}q_{2yz}(z-z_{c})dT\\
&+A_{1}\rho h^{-1}|e_z|\chi_{3}(M_{5})\int_{T}u_{zzx}(x-x_{c})dT-\chi_{3}(M_{5})\int_{T}q_{3zx}(x-x_{c})dT\\
&+R_{1}(T)+R_{2}(T).
\end{split}
\end{equation*}}}

From the definition of $Q_{b}$ and the usual integration by parts, we arrive at
\begin{equation}\label{eq1}
\begin{split}
&\frac{1}{2}|e_x|^{2}|F_{5}|\chi_{3}(M_{5})(Q_{b}u_{xx}|_{F_{3}}+Q_{b}u_{xx}|_{F_{4}}) \\
=&\frac{|e_x|^{2}|F_5|}{|e_y||F_3|}\chi_{3}(M_{5})(\int_{T}u_{xx}dT+\int_{T}u_{xxy}(y-y_{c})dT) \\
=&\frac{|e_x|^{2}|F_5|}{2}\chi_{3}(M_{5})(Q_{b}u_{xx}|_{F_{5}}+Q_{b}u_{xx}|_{F_{6}}) \\
&+\frac{|e_x|^{2}}{|e_z|}\chi_{3}(M_{5})(\int_{T}u_{xxy}(y-y_{c})dT-\int_{T}u_{xxz}(z-z_{c})dT).
\end{split}
\end{equation}
Similarly, we have
\begin{equation}\label{eq2}
\begin{split}
&\frac{1}{2}|e_x||F_{5}|\chi_{3}(M_{5})(Q_{b}q_{1x}|_{F_{3}}+Q_{b}q_{1x}|_{F_{4}}) \\
=&\frac{|e_x||F_5|}{|e_y||F_3|}\chi_{3}(M_{5})(\int_{T}q_{1x}dT+\int_{T}q_{1xy}(y-y_{c})dT) \\
=&\frac{|e_x||F_5|}{2}\chi_{3}(M_{5})(Q_{b}q_{1x}|_{F_{5}}+Q_{b}q_{1x}|_{F_{6}}) \\
&+\frac{|e_x|}{|e_z|}\chi_{3}(M_{5})(\int_{T}q_{1xy}(y-y_{c})dT-\int_{T}q_{1xz}(z-z_{c})dT),
\end{split}
\end{equation}
 \begin{equation}\label{eq3}
\begin{split}
&\frac{1}{2}|e_y|^{2}|F_{1}|\chi_{1}(M_{1})(Q_{b}u_{yy}|_{F_{5}}+Q_{b}u_{yy}|_{F_{6}}) \\
=&\frac{|e_y|^{2}|F_1|}{|e_z||F_5|}\chi_{1}(M_{1})(\int_{T}u_{yy}dT+\int_{T}u_{yyz}(z-z_{c})dT) \\
=&\frac{|e_y|^{2}|F_1|}{2}\chi_{1}(M_{1})(Q_{b}u_{yy}|_{F_{1}}+Q_{b}u_{yy}|_{F_{2}}) \\
&+\frac{|e_y|^{2}}{|e_x|}\chi_{1}(M_{1})(\int_{T}u_{yyz}(z-z_{c})dT-\int_{T}u_{yyx}(x-x_{c})dT),
\end{split}
\end{equation}
 \begin{equation}\label{eq4}
\begin{split}
&\frac{1}{2}|e_z|^{2}|F_{3}|\chi_{2}(M_{3})(Q_{b}u_{zz}|_{F_{1}}+Q_{b}u_{zz}|_{F_{2}}) \\
=&\frac{|e_z|^{2}|F_3|}{|e_x||F_1|}\chi_{2}(M_{3})(\int_{T}u_{zz}dT+\int_{T}u_{zzx}(x-x_{c})dT) \\
=&\frac{|e_z|^{2}|F_3|}{2}\chi_{2}(M_{3})(Q_{b}u_{zz}|_{F_{3}}+Q_{b}u_{zz}|_{F_{4}}) \\
&+\frac{|e_z|^{2}}{|e_y|}\chi_{2}(M_{3})(\int_{T}u_{zzx}(x-x_{c})dT-\int_{T}u_{zzy}(y-y_{c})dT),
\end{split}
\end{equation}
\begin{equation}\label{eq5}
\begin{split}
&\frac{1}{2}|e_y||F_{1}|\chi_{1}(M_{1})(Q_{b}q_{2y}|_{F_{5}}+Q_{b}q_{2y}|_{F_{6}}) \\
=&\frac{|e_y||F_1|}{2}\chi_{1}(M_{1})(Q_{b}q_{2y}|_{F_{1}}+Q_{b}q_{2y}|_{F_{2}}) \\
&+\frac{|e_y|}{|e_x|}\chi_{1}(M_{1})(\int_{T}q_{2yz}(z-z_{c})dT-\int_{T}q_{2yx}(x-x_{c})dT),
\end{split}
\end{equation}
 \begin{equation}\label{eq6}
\begin{split}
&\frac{1}{2}|e_z||F_{3}|\chi_{2}(M_{3})(Q_{b}q_{3z}|_{F_{1}}+Q_{b}q_{3z}|_{F_{2}}) \\
=&\frac{|e_z||F_3|}{2}\chi_{2}(M_{3})(Q_{b}q_{3z}|_{F_{3}}+Q_{b}q_{3z}|_{F_{4}}) \\
&+\frac{|e_z|}{|e_y|}\chi_{2}(M_{3})(\int_{T}q_{3zx}(x-x_{c})dT-\int_{T}q_{3zy}(y-y_{c})dT).
\end{split}
\end{equation}

Using \eqref{FPS} and (\ref{eq1})- (\ref{eq6}), \eqref{error-6-22:06} can be rewritten as
 \begin{equation}\label{error-6-22:09}
\begin{split}
&\zeta_{u}(v_{b})\\=&\frac{A_1}{2}\sum_{T\in\T_h}\rho h^{-1}|F_1|\chi_{1}(M_1)\Big (|e_y|^{2}(Q_{b}u_{yy}|_{F_{1}}+Q_{b}u_{yy}|_{F_{2}})\\&+
|e_z|^{2}(Q_{b}u_{zz}|_{F_{1}}+Q_{b}u_{zz}|_{F_{2}})\Big) \\
&+\frac{A_1}{2}\sum_{T\in\T_h}\rho h^{-1}|F_3|\chi_{2}(M_3)\Big(|e_x|^{2}(Q_{b}u_{xx}|_{F_{3}}+Q_{b}u_{xx}|_{F_{4}})\\&+
|e_z|^{2}(Q_{b}u_{zz}|_{F_{3}}+Q_{b}u_{zz}|_{F_{4}})\Big) \\
&+\frac{A_1}{2}\sum_{T\in\T_h}\rho h^{-1}|F_5|\chi_{3}(M_5)\Big(|e_y|^{2}(Q_{b}u_{yy}|_{F_{5}}+Q_{b}u_{yy}|_{F_{6}})\\&+
|e_x|^{2}(Q_{b}u_{xx}|_{F_{5}}+Q_{b}u_{xx}|_{F_{6}})\Big) \\
&-\frac{1}{2}\sum_{T\in\T_h}|F_1|\chi_{1}(M_1)\Big(|e_y|(Q_{b}q_{2y}|_{F_{1}}+Q_{b}q_{2y}|_{F_{2}})\\&+
|e_z|(Q_{b}q_{3z}|_{F_{1}}+Q_{b}q_{3z}|_{F_{2}})\Big) \\
&-\frac{1}{2}\sum_{T\in\T_h}|F_3|\chi_{2}(M_3)\Big(|e_x|(Q_{b}q_{1x}|_{F_{3}}+Q_{b}q_{1x}|_{F_{4}})\\&+
|e_z|(Q_{b}q_{3z}|_{F_{3}}+Q_{b}q_{3z}|_{F_{4}})\Big) \\
&-\frac{1}{2}\sum_{T\in\T_h}|F_5|\chi_{3}(M_5)\Big (|e_x|(Q_{b}q_{1x}|_{F_{5}}+Q_{b}q_{1x}|_{F_{6}})\\&+
 |e_y| (Q_{b}q_{2y}|_{F_{5}}+Q_{b}q_{2y}|_{F_{6}})\Big) +\sum_{T\in\T_h}R_4(T),
\end{split}
\end{equation}
where the remainder term $R_{4}(T)$ is given by
\begin{equation*}
\begin{split}
R_{4}(T)=&A_{1}\frac{\rho h^{-1}|e_x|^{2}}{|e_z|}\chi_{3}(M_{5})(\int_{T}u_{xxy}(y-y_{c})dT-\int_{T}u_{xxz}(z-z_{c})dT)\\
&+A_{1}\frac{\rho h^{-1}|e_y|^{2}}{|e_x|}\chi_{1}(M_{1})(\int_{T}u_{yyz}(z-z_{c})dT-\int_{T}u_{yyx}(x-x_{c})dT)\\
&+A_{1}\frac{\rho h^{-1}|e_z|^{2}}{|e_y|}\chi_{2}(M_{3})(\int_{T}u_{zzx}(x-x_{c})dT-\int_{T}u_{zzy}(y-y_{c})dT)\\
&-\frac{|e_x|}{|e_z|}\chi_{3}(M_{5})(\int_{T}q_{1xy}(y-y_{c})dT-\int_{T}q_{1xz}(z-z_{c})dT)\\
&-\frac{|e_y|}{|e_x|}\chi_{1}(M_{1})(\int_{T}q_{2yz}(z-z_{c})dT-\int_{T}q_{2yx}(x-x_{c})dT)\\
&-\frac{|e_z|}{|e_y|}\chi_{2}(M_{3})(\int_{T}q_{3zx}(x-x_{c})dT-\int_{T}q_{3zy}(y-y_{c})dT)\\
&+R_{3}(T).
\end{split}
\end{equation*}

Letting
 \begin{equation*}\label{EQ:wwb}
w_{b}=\left\{
\begin{array}{l}
      \frac{1}{2}\rho^{-1}h^{-1}\left(\rho h^{-1}A_{1}(|e_y|^{2}Q_{b}u_{yy}|_{F_{1}}+|e_z|^{2}Q_{b}u_{zz}|_{F_{1}})-
      |e_y|Q_{b}q_{2y}|_{F_{1}}-|e_z|Q_{b}q_{3z}|_{F_{1}}\right), \mbox{on } F_{1},\\
     \frac{1}{2}\rho^{-1}h^{-1}\left(\rho h^{-1}A_{1}(|e_y|^{2}Q_{b}u_{yy}|_{F_{2}}+|e_z|^{2}Q_{b}u_{zz}|_{F_{2}})-
      |e_y|Q_{b}q_{2y}|_{F_{2}}-|e_z|Q_{b}q_{3z}|_{F_{2}}\right), \mbox{on } F_{2},\\
      \frac{1}{2}\rho^{-1}h^{-1}\left(\rho h^{-1}A_{1}(|e_x|^{2}Q_{b}u_{xx}|_{F_{3}}+|e_z|^{2}Q_{b}u_{zz}|_{F_{3}})-
      |e_x|Q_{b}q_{1x}|_{F_{3}}-|e_z|Q_{b}q_{3z}|_{F_{3}}\right), \mbox{on } F_{3},\\
     \frac{1}{2}\rho^{-1}h^{-1}\left(\rho h^{-1}A_{1}(|e_x|^{2}Q_{b}u_{xx}|_{F_{4}}+|e_z|^{2}Q_{b}u_{zz}|_{F_{4}})-
      |e_x|Q_{b}q_{1x}|_{F_{4}}-|e_z|Q_{b}q_{3z}|_{F_{4}}\right), \mbox{on } F_{4},\\
      \frac{1}{2}\rho^{-1}h^{-1}\left(\rho h^{-1}A_{1}(|e_y|^{2}Q_{b}u_{yy}|_{F_{5}}+|e_x|^{2}Q_{b}u_{xx}|_{F_{5}})-
      |e_x|Q_{b}q_{1x}|_{F_{5}}-|e_y|Q_{b}q_{2y}|_{F_{5}}\right),  \mbox{on } F_{5},\\
       \frac{1}{2}\rho^{-1}h^{-1}\left(\rho h^{-1}A_{1}(|e_y|^{2}Q_{b}u_{yy}|_{F_{6}}+|e_x|^{2}Q_{b}u_{xx}|_{F_{6}})-
      |e_x|Q_{b}q_{1x}|_{F_{6}}-|e_y|Q_{b}q_{2y}|_{F_{6}}\right), \mbox{on } F_{6}.
\end{array}
\right.
\end{equation*}
Thus, \eqref{error-6-22:09} is rewritten as
 \begin{eqnarray}\label{sperconve-error-estimate-6-6}
\begin{split}
\zeta_{u}(v_{b})=&\rho h {\sum_{T\in\T_h}} \langle w_{b},Q_{b}{\S}(v_{b})-v_{b}\rangle_{\partial T}+{\color{black}{\sum_{T\in\T_h}}}R_{4}(T)\\
=&-\rho h\sum_{T\in{\cal T}_{h}}\langle Q_{b}{\S}(w_{b})-w_{b},Q_{b}{\S}(v_{b})-v_{b}\rangle_{\partial T}+\sum_{T\in{\cal T}_{h}}R_{4}(T),
\end{split}
\end{eqnarray}
where we use (\ref{a7}) on the last line.

Substituting \eqref{sperconve-error-estimate-6-6} into  \eqref{a9} we obtain
\begin{eqnarray}\label{sperconve-error-equation-10-15}
\begin{split}
&({\color{black}A}\nabla_{d}e_{b},\nabla_{d}v_{b})_{h}+\rho h^{-1}\sum_{T\in{\cal T}_{h}}\langle Q_{b}{\S}(e_{b})-e_{b},Q_{b}{\S}(v_{b})-v_{b}\rangle_{\partial T}     \\
=&-\rho h\sum_{T\in{\cal T}_{h}}\langle Q_{b}{\S}(w_{b})-w_{b},Q_{b}{\S}(v_{b})-v_{b}\rangle_{\partial T}+R_{4}(v_b),
\end{split}
\end{eqnarray}
where $R_{4}(v_b)=\sum_{T\in{\cal T}_{h}}R_{4}(T)$.

Letting $\widetilde{e_{b}}=e_{b}+h^{2}w_{b}$, we arrive at
\begin{eqnarray*}
\begin{split}
&({\color{black}A}\nabla_{d}\widetilde{e_{b}},\nabla_{d}v_{b})_{h}+\rho h^{-1}\sum_{T\in{\cal T}_{h}}\langle Q_{b}{\S}(\widetilde{e_{b}})-\widetilde{e_{b}}, Q_{b}{\S}(v_{b})-v_{b}\rangle_{\partial T}     \\
=&h^{2}({\color{black}A}\nabla_{d}w_{b},\nabla_{d}v_{b})_{h}+R_{4}(v_b).
\end{split}
\end{eqnarray*}

It is easy to see from the definition of $w_{b}$  that
 \begin{eqnarray}\label{sperconve-pertur-estimate-1-1-18:37}
\|\nabla_d w_b\|_0 \leq C \|u\|_3,
\end{eqnarray}
from which, \eqref{a27} is obtained in a similar way of the proof of \eqref{RE-1-1}. This completes the proof of the theorem.
\end{proof}

For the second order elliptic problem \eqref{model} in three dimensions with the homogeneous Dirichlet boundary value, we have $w_b \in V_{b}^{0}$. Letting $v_{b}=\widetilde{e_{b}}$ in \eqref{EQ:marker:new} gives a superconvergence estimate
\begin{eqnarray*}
\Big({\color{black}\sum_{T\in{\cal T}_{h}}}\|\nabla_{d}\widetilde{e_{b}} \|_{T}^{2}\Big)^{\frac{1}{2}}\leq C h^{2}\| u \|_{3}.
\end{eqnarray*}
For the second order elliptic problem \eqref{model} in three dimensions with the nonhomogeneous Dirichlet boundary condition, we have   $w_b \notin V_{b}^{0}$. Thus, $\widetilde{e_{b}}=(Q_bu-u_b)+h^2 w_b \notin V_{b}^{0}$. In order to obtain a superconvergence estimate, we enforce a computational solution $u_b$ satisfying the following condition:
\begin{eqnarray}\label{modified-boundary-date}
u_{b}=Q_{b}g+h^{2}w_{b}, \quad \text{on} \  \partial\Omega.
\end{eqnarray}
The above boundary condition is able to be implemented if $w_{b}|_{\partial \Omega}$ is computable without any prior knowledge of the exact solution $u$. The following theorem and corollary  assume that $w_{b}|_{\partial \Omega}$ is computable.

\begin{theorem}\label{superconvergence-H-1-2}
Assume that $u \in H^{3}(\Omega)$ is the exact solution of the second order elliptic model problem \eqref{model} in three dimensions and $u_{b}\in V_{b}^{g}$ is the numerical solution of the simplified weak Galerkin finite element scheme (\ref{SWG}). Let $w_{b}\in V_{b}$ be a given function defined in Theorem \ref{superconvergence-H-1-1}. Denote by $\widetilde{e_{b}}=(Q_{b}u-u_{b})+h^{2}w_{b}$ the modified error function. The following superconvergence estimate holds true:
\begin{equation}\label{EQ:SuperC:17:16}
\left(\sum_{T\in\T_{h}}\|\nabla_{d}\widetilde{e_{b}} \|_{T}^{2}\right)^{\frac12}+ \3bar{\S}(\widetilde{e_{b}})-\widetilde{e_{b}}\3bar_{\mathcal{E}_h} \leq Ch^{2}\|u\|_{3}.
\end{equation}
\end{theorem}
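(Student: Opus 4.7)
The plan is to use Theorem \ref{superconvergence-H-1-1} with the test function $v_b = \widetilde{e_b}$ and then apply the coercivity of $A$ together with Cauchy--Schwarz. The first task is to verify that $\widetilde{e_b}$ actually lies in $V_b^0$, so that it is admissible as a test function in \eqref{EQ:marker:new}. This follows from the prescribed boundary data \eqref{modified-boundary-date}: on $\partial\Omega$,
\[
\widetilde{e_b} = (Q_b u - u_b) + h^2 w_b = Q_b g - (Q_b g + h^2 w_b) + h^2 w_b = 0,
\]
where we used that $u=g$ on $\partial\Omega$. Thus $\widetilde{e_b}\in V_b^0$ in the nonhomogeneous case as well (and trivially in the homogeneous case since $w_b\in V_b^0$ there).

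Next I would substitute $v_b=\widetilde{e_b}$ into \eqref{EQ:marker:new}. The left-hand side becomes
\[
(A\nabla_d \widetilde{e_b},\nabla_d \widetilde{e_b})_h + \rho h^{-1}\sum_{T\in\T_h}\langle Q_b\S(\widetilde{e_b})-\widetilde{e_b},\,Q_b\S(\widetilde{e_b})-\widetilde{e_b}\rangle_{\partial T}.
\]
Since $A$ is uniformly symmetric positive definite, the first term is bounded below by $c\sum_{T\in\T_h}\|\nabla_d\widetilde{e_b}\|_T^2$, and by the definition \eqref{RE-2018-6-3} the second term equals $\3bar\S(\widetilde{e_b})-\widetilde{e_b}\3bar_{\mathcal{E}_h}^2$. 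Hence the left-hand side controls the square of the target quantity on the left of \eqref{EQ:SuperC:17:16}.

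For the right-hand side, the first term is estimated by Cauchy--Schwarz, uniform boundedness of $A$, and the key bound \eqref{sperconve-pertur-estimate-1-1-18:37}:
\[
|h^2(A\nabla_d w_b,\nabla_d\widetilde{e_b})_h| \leq C h^2 \|\nabla_d w_b\|_0\,\|\nabla_d\widetilde{e_b}\|_0 \leq C h^2 \|u\|_3\,\|\nabla_d\widetilde{e_b}\|_0.
\]
The remainder term is controlled directly by \eqref{a27} with $v_b=\widetilde{e_b}$. Combining, the right-hand side is bounded by
\[
C h^2 \|u\|_3\Bigl(\|\nabla_d\widetilde{e_b}\|_0 + \3bar\S(\widetilde{e_b})-\widetilde{e_b}\3bar_{\mathcal{E}_h}\Bigr).
\]

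Finally, setting $X := \bigl(\sum_{T}\|\nabla_d\widetilde{e_b}\|_T^2\bigr)^{1/2} + \3bar\S(\widetilde{e_b})-\widetilde{e_b}\3bar_{\mathcal{E}_h}$, the coercivity estimate gives $X^2 \lesssim C h^2\|u\|_3\, X$ (using $a^2+b^2 \sim (a+b)^2$ to compare the sum with the square), and dividing by $X$ yields $X \leq C h^2\|u\|_3$, which is exactly \eqref{EQ:SuperC:17:16}. I do not expect a serious obstacle here since all the heavy lifting — the construction of the corrector $w_b$, the derivation of the modified error equation, and the bound on $R_4$ — was already carried out in Theorem \ref{superconvergence-H-1-1}; the only mild subtlety is confirming admissibility of $\widetilde{e_b}$ as a test function via the modified boundary data \eqref{modified-boundary-date}.
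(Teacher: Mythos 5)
Your proposal is correct and follows essentially the same route as the paper: test the modified error equation \eqref{EQ:marker:new} with $v_b=\widetilde{e_b}$, bound the right-hand side via Cauchy--Schwarz together with \eqref{sperconve-pertur-estimate-1-1-18:37} and \eqref{a27}, and absorb into the coercive left-hand side (the paper uses Young's inequality where you divide by $X$, an immaterial variation). Your explicit check that $\widetilde{e_b}\in V_b^0$ via the modified boundary data \eqref{modified-boundary-date} is a point the paper leaves implicit, and it is a welcome addition.
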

\begin{proof}
Letting $v_{b}=\widetilde{e_{b}}\in V_{b}^{0}$ in \eqref{EQ:marker:new}  gives
\begin{eqnarray*}
\begin{split}
&({\color{black}A}\nabla_{d}\widetilde{e_{b}},\nabla_{d}\widetilde{e_{b}})_h+\rho h^{-1}\sum_{T\in{\cal T}_{h}}\langle Q_{b}{\S}(\widetilde{e_{b}})-\widetilde{e_{b}},Q_{b}{\S}(\widetilde{e_{b}})-\widetilde{e_{b}}\rangle_{\partial T}     \\
=&h^{2}({\color{black}A}\nabla_{d}w_{b},\nabla_{d}\widetilde{e_{b}})_h+R_{4}(\widetilde{e_{b}}).
\end{split}
\end{eqnarray*}

It follows from \eqref{a27}, \eqref{sperconve-pertur-estimate-1-1-18:37}, \eqref{RE-2018-6-3} and the Cauchy-Schwarz inequality that
\begin{eqnarray*}
&&({\color{black}A}\nabla_{d}\widetilde{e_{b}},\nabla_{d}\widetilde{e_{b}})_h+\rho h^{-1}\sum_{T\in{\cal T}_{h}}\|Q_{b}{\S}(\widetilde{e_{b}})-\widetilde{e_{b}} \|_{\partial T}^{2}\\
&\leq& C h^{2}\| u\|_{3}(\|\nabla_{d}\widetilde{e_{b}}\|_0+\3bar{\S}(\widetilde{e_{b}})-\widetilde{e_{b}}\3bar_{\mathcal{E}_h})\\
&\leq& C h^{4} \| u\|_{3}^{2}+ \frac{\|A\|_{\infty}}{2} \|\nabla_{d}\widetilde{e_{b}}\|^2_0+ \frac{1}{2}\3bar{\S}(\widetilde{e_{b}})-\widetilde{e_{b}}\3bar_{\mathcal{E}_h}^2,
\end{eqnarray*}
which leads to
$$
\sum_{T\in{\cal T}_{h}}\|\nabla_d  \tilde{e}_b\|_{T}^2 + \3bar {\S}(\tilde{e}_b)-\tilde{e}_b\3bar^2_{\mathcal{E}_h} \leq  C h^4 \|u\|_3^2.
$$
This completes the proof of the theorem.
\end{proof}

We can see from \eqref{modified-boundary-date} that the standard $L^{2}$ projection of Dirichlet data $g$ is perturbed by
\begin{eqnarray*}\label{EQ:PL2:19:01}
\varepsilon_b:=\frac{1}{12}\left(|e_y|^{2}Q_{b}g_{yy}+|e_z|^{2}Q_{b}g_{zz}-6\rho^{-1}h(|e_y|Q_{b}q_{2y}+|e_z|Q_{b}q_{3z})\right)
\end{eqnarray*}
on boundary faces parallel to the flat face $F_1$ (or $F_2$),
\begin{eqnarray*}\label{EQ:PL2:19:02}
\varepsilon_b:=\frac{1}{12}\left(|e_x|^{2}Q_{b}g_{xx}+|e_z|^{2}Q_{b}g_{zz}-6\rho^{-1}h(|e_x|Q_{b}q_{1x}+|e_z|Q_{b}q_{3z})\right)
\end{eqnarray*}
on boundary faces parallel to the flat face $F_3$ (or $F_4$), and
\begin{eqnarray*}\label{EQ:PL2:19:03}
\varepsilon_b:=\frac{1}{12}\left(|e_x|^{2}Q_{b}g_{xx}+|e_y|^{2}Q_{b}g_{yy}-6\rho^{-1}h(|e_x|Q_{b}q_{1x}+|e_y|Q_{b}q_{2y})\right)
\end{eqnarray*}
on boundary faces parallel to the flat face $F_5$ (or $F_6$). For the Dirichlet boundary value problem with a diagonal diffusive tensor $A=(a_{11}, 0, 0; 0, a_{22}, 0; 0, 0, a_{33})$, the perturbation $\varepsilon_b$ is computable using the boundary data $g$ and thus the mixed partial derivatives of $u$ are not needed. The following superconvergence estimate is particularly for the second order elliptic problem in three dimensions with a diagonal diffusive tensor $A$.

\begin{corollary}\label{Corollary:SuperC-nonhomo}
Assume that $u\in H^{3}(\Omega)$ is the exact solution of the model problem (\ref{model}) in three dimensions with a diagonal diffusive tensor $A=(a_{11}, 0, 0; 0, a_{22}, 0; 0, 0, a_{33})$. Let $u_{b}\in V_{b}$ be the weak Galerkin finite element solution arising from the scheme (\ref{SWG}) with the boundary values specified as follows: on the boundary faces which are parallel to the flat face $F_1$ (or $F_2$), let
\begin{eqnarray}\label{EQ:PL2:01}
u_{b}=Q_{b}g+\frac{1}{12}\left(|e_y|(|e_y|-\frac{6}{\rho} ha_{22})Q_{b}g_{yy}+|e_z|(|e_z|-\frac{6}{\rho}ha_{33})Q_{b}g_{zz}\right)
\end{eqnarray}
on the boundary faces which are parallel to the flat face $F_3$ (or $F_4$), let
\begin{eqnarray}\label{EQ:PL2:02}
u_{b}=Q_{b}g+\frac{1}{12}\left(|e_x|(|e_x|-\frac{6}{\rho}ha_{11})Q_{b}g_{xx}+|e_z|(|e_z|-\frac{6}{\rho}ha_{33})Q_{b}g_{zz}\right)
\end{eqnarray}
and on the boundary faces which are parallel to the flat face $F_5$ (or $F_6$), let
\begin{eqnarray}\label{EQ:PL2:03}
u_{b}=Q_{b}g+\frac{1}{12}\left(|e_y|(|e_y|-\frac{6}{\rho}ha_{22})Q_{b}g_{yy}+|e_x|(|e_x|-\frac{6}{\rho}ha_{11})Q_{b}g_{xx}\right).
\end{eqnarray}
There holds
\begin{eqnarray}\label{sperconve-coro-estimate-6-6}
\left(\sum_{T\in{\cal T}_{h}}\|\mathbb{Q}_h\nabla u-\nabla_{d}u_{b} \|_{T}^{2}\right)^{1/2}\leq Ch^{2}\|u\|_{3}.
\end{eqnarray}
\end{corollary}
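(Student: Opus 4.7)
The plan is to reduce this corollary to Theorem \ref{superconvergence-H-1-2} by verifying that the prescribed boundary data \eqref{EQ:PL2:01}--\eqref{EQ:PL2:03} coincide exactly with the required boundary condition $u_b = Q_b g + h^2 w_b$ on $\partial\Omega$ in the special case of a diagonal diffusive tensor $A=\mathrm{diag}(a_{11},a_{22},a_{33})$. Once this identification is in place, Theorem \ref{superconvergence-H-1-2} gives the $\nabla_d$-estimate for the modified error function $\widetilde{e_b}=(Q_bu-u_b)+h^2 w_b$, and the commutative property \eqref{COMM-PRO} converts this into the desired estimate for $\mathbb{Q}_h\nabla u - \nabla_d u_b$.

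First I would evaluate the definition of $w_b$ given in Theorem \ref{superconvergence-H-1-1} on a boundary face parallel to $F_1$ (the other five orientations are analogous by symmetry). Since $A$ is diagonal we have $q_2 = a_{22}u_y$ and $q_3=a_{33}u_z$, so $q_{2y}=a_{22}u_{yy}$ and $q_{3z}=a_{33}u_{zz}$; moreover on $\partial\Omega$ we have $u=g$, and the derivatives $u_{yy}$, $u_{zz}$ are purely tangential along a face parallel to $F_1$, so they coincide with $g_{yy}$, $g_{zz}$ on such a face. Substituting into the formula for $w_b$ and multiplying by $h^2$ yields
\[
h^2 w_b\big|_{F_1}=\tfrac{1}{12}\Big(|e_y|\big(|e_y|-\tfrac{6h}{\rho}a_{22}\big)Q_b g_{yy}+|e_z|\big(|e_z|-\tfrac{6h}{\rho}a_{33}\big)Q_b g_{zz}\Big),
\]
which matches \eqref{EQ:PL2:01} exactly. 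The analogous identities for faces parallel to $F_3$ and $F_5$ reproduce \eqref{EQ:PL2:02} and \eqref{EQ:PL2:03}. Thus the boundary data of $u_b$ in the corollary is precisely $Q_bg + h^2 w_b$ on $\partial\Omega$, which is the boundary condition \eqref{modified-boundary-date} required by Theorem \ref{superconvergence-H-1-2}. I would remark that this computability of $w_b|_{\partial\Omega}$ relies crucially on the diagonality of $A$: if $A$ were full, terms such as $q_{1y}$ on $F_1$ would involve the normal derivative $u_x$, which is not recoverable from the Dirichlet data $g$.

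With the boundary data identified, Theorem \ref{superconvergence-H-1-2} applies and yields
\[
\left(\sum_{T\in\T_h}\|\nabla_d\widetilde{e_b}\|_T^2\right)^{1/2}\leq C h^2 \|u\|_3.
\]
By linearity of the discrete weak gradient and the commutative property $\nabla_d Q_b u = \mathbb{Q}_h \nabla u$ from \eqref{COMM-PRO}, we have
\[
\nabla_d \widetilde{e_b} = \mathbb{Q}_h\nabla u - \nabla_d u_b + h^2 \nabla_d w_b.
\]
A triangle inequality combined with the bound $\|\nabla_d w_b\|_0 \leq C\|u\|_3$ established in \eqref{sperconve-pertur-estimate-1-1-18:37} then gives
\[
\left(\sum_{T\in\T_h}\|\mathbb{Q}_h\nabla u - \nabla_d u_b\|_T^2\right)^{1/2} \leq \left(\sum_{T\in\T_h}\|\nabla_d\widetilde{e_b}\|_T^2\right)^{1/2} + h^2\|\nabla_d w_b\|_0 \leq C h^2 \|u\|_3,
\]
which is precisely \eqref{sperconve-coro-estimate-6-6}.

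The only real obstacle is the bookkeeping in the first step: verifying that the three algebraic expressions \eqref{EQ:PL2:01}--\eqref{EQ:PL2:03} do indeed arise from $h^2 w_b|_{\partial\Omega}$ after specializing to a diagonal tensor and using $u|_{\partial\Omega}=g$. Everything else is a direct quotation of the preceding theorem together with the commutative identity and a triangle inequality; no new analytical idea is needed beyond recognizing that the diagonal structure is exactly what makes the boundary perturbation $\varepsilon_b$ computable from the given data.
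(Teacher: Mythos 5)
Your proposal is correct and follows essentially the same route as the paper: identify the prescribed boundary data \eqref{EQ:PL2:01}--\eqref{EQ:PL2:03} with $Q_b g+h^2 w_b$ (computable from $g$ precisely because $A$ is diagonal, as the paper notes just before the corollary), invoke Theorem \ref{superconvergence-H-1-2}, and convert the bound on $\nabla_d\widetilde{e_b}$ into \eqref{sperconve-coro-estimate-6-6} via the commutative property \eqref{COMM-PRO}. You merely make explicit two steps the paper's terse proof leaves implicit, namely the face-by-face verification of the boundary formulas and the triangle-inequality removal of the $h^{2}\nabla_d w_b$ term using \eqref{sperconve-pertur-estimate-1-1-18:37}, so nothing is missing.
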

\begin{proof}
Note that the perturbation $\varepsilon_b$ is of the order ${\cal{O}}(h^2)$. It follows from Theorem \ref{superconvergence-H-1-2} that
\begin{eqnarray*}
\left(\sum_{T\in{\cal T}_{h}}\| \nabla_{d}Q_{b}u-\nabla_{d}u_{b}\|_{T}^{2}\right)^{1/2}\leq Ch^{2}\| u\|_{3},
\end{eqnarray*}
which combined with the commutative property (\ref{COMM-PRO}), yields
\begin{eqnarray*}
\left(\sum_{T\in{\cal T}_{h}}\|\mathbb{Q}_h\nabla u-\nabla_{d}u_{b} \|_{T}^{2}\right)^{1/2}\leq Ch^{2}\|u\|_{3}.
\end{eqnarray*}
This completes the proof of the corollary.
\end{proof}

For the model problem \eqref{model}  in three dimensions with arbitrary diffusive coefficient $A$, the following superconvergence error estimate holds true.

\begin{theorem}\label{superconvergence-H-1-53}
Assume that $u\in H^{3}(\Omega)$ is the exact solution of the second order elliptic model problem (\ref{model}) in three dimensions. Let $u_{b}\in V_{b}$ be the weak Galerkin finite element solution arising from the scheme \eqref{SWG} with the boundary value $u_{b}=Q_{b}g$ on $\partial\Omega$. There holds\begin{equation*}\label{EQ:SuperC:053-generic}
\left(\sum_{T\in\T_{h}}\|\nabla_{d}{e_{b}} \|_{T}^{2}\right)^{\frac12} \leq Ch^{1.5}(\|u\|_{3}+\|\nabla^2 u\|_{0,\partial\Omega}).
\end{equation*}
\end{theorem}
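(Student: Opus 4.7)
The starting point is the identity \eqref{EQ:marker:new} from Theorem \ref{superconvergence-H-1-1}, applied to the modified error $\widetilde{e_b} = e_b + h^2 w_b$ with $e_b = Q_b u - u_b$. Under the assumption $u_b = Q_b g$ on $\partial\Omega$ we have $e_b \in V_b^0$, so the ``raw'' error is admissible as a test function; however $\widetilde{e_b}\notin V_b^0$ in general, because $w_b|_{\partial\Omega}$ is not zero. Thus the clean coercivity argument used in the proof of Theorem \ref{superconvergence-H-1-2} (where the boundary datum was shifted by $h^2 w_b$) is no longer available, and the boundary trace of $w_b$ must be handled separately. This accounts for the loss of $h^{1/2}$ relative to the full superconvergence rate $h^2$.

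To recover coercivity I would introduce a boundary-layer corrector $\eta_b\in V_b$ defined by $\eta_b|_F = w_b|_F$ for every boundary face $F\subset\partial\Omega$ and $\eta_b|_F = 0$ for every interior face. Then $\widetilde{e_b}-h^2\eta_b \in V_b^0$ and may be substituted into \eqref{EQ:marker:new}. After rearranging, one obtains
\begin{equation*}
(A\nabla_d\widetilde{e_b},\nabla_d\widetilde{e_b})_h + \3bar S(\widetilde{e_b})-\widetilde{e_b}\3bar_{\mathcal{E}_h}^{2}
= h^2(A\nabla_d w_b,\nabla_d \widetilde{e_b})_h + \Theta(\widetilde{e_b},\eta_b) + R_4(\widetilde{e_b}-h^2\eta_b),
\end{equation*}
where $\Theta(\widetilde{e_b},\eta_b)$ collects the cross terms $h^2(A\nabla_d\widetilde{e_b},\nabla_d\eta_b)_h$, the corresponding stabilization cross term, and the lower-order term $h^4(A\nabla_d w_b,\nabla_d\eta_b)_h$.

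The decisive step is to control the corrector. Because $\nabla_d\eta_b$ is supported only on elements touching $\partial\Omega$, a direct computation from \eqref{EQ:weak-gradient} gives on each such element $|\nabla_d\eta_b|^2|T| \lesssim h^{-1}|F||w_b|_F|^2$, and summing over the $\mathcal{O}(h^{-2})$ boundary elements yields
\begin{equation*}
\|\nabla_d\eta_b\|_0 + \3bar S(\eta_b)-\eta_b\3bar_{\mathcal{E}_h} \;\lesssim\; h^{-1/2}\|w_b\|_{0,\partial\Omega} \;\lesssim\; h^{-1/2}\|\nabla^2 u\|_{0,\partial\Omega},
\end{equation*}
where the second inequality uses the explicit form of $w_b$ in Theorem \ref{superconvergence-H-1-1} (each component is a trace of a second derivative of $u$, with bounded coefficients $|e_i|/h$).

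Combining this bound with $\|\nabla_d w_b\|_0\lesssim\|u\|_3$ from \eqref{sperconve-pertur-estimate-1-1-18:37}, the estimate \eqref{a27} for $R_4$, Cauchy--Schwarz, and Young's inequality to absorb all factors of $\|\nabla_d\widetilde{e_b}\|_0$ and $\3bar S(\widetilde{e_b})-\widetilde{e_b}\3bar$ into the coercive left-hand side, gives
\begin{equation*}
\|\nabla_d\widetilde{e_b}\|_0^{2} + \3bar S(\widetilde{e_b})-\widetilde{e_b}\3bar_{\mathcal{E}_h}^{2}
\;\lesssim\; h^{4}\|u\|_3^{2} + h^{3}\|\nabla^2 u\|_{0,\partial\Omega}^{2}.
\end{equation*}
A final triangle inequality $\|\nabla_d e_b\|_0 \leq \|\nabla_d\widetilde{e_b}\|_0 + h^2\|\nabla_d w_b\|_0$ yields the claimed $h^{3/2}$ bound. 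The main obstacle is precisely the boundary-layer estimate: the $h^{-1/2}$ factor in $\|\nabla_d\eta_b\|_0$ is intrinsic and is the reason the rate deteriorates from $h^2$ (nonhomogeneous modified boundary data, Corollary \ref{Corollary:SuperC-nonhomo}) to $h^{1.5}$ here, so no clever re-arrangement can beat this exponent without altering the boundary data.
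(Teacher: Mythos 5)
Your proposal is correct and follows essentially the route the paper intends: the paper's own ``proof'' is a citation to Theorem 6.7 of \cite{sup_LWW2018}, whose argument is precisely this splitting off of the boundary trace of $w_b$ (your corrector $\eta_b$), testing \eqref{EQ:marker:new} with $\widetilde{e_b}-h^2\eta_b\in V_b^0$, and paying the boundary-layer factor $h^{-1/2}$ so that $h^2\cdot h^{-1/2}$ yields the stated rate $h^{1.5}$ with the extra term $\|\nabla^2 u\|_{0,\partial\Omega}$. The only detail worth making explicit is that the elementwise bound is $\|\nabla_d\eta_b\|_T^2\lesssim |e_x|^{-1}\|w_b\|_{0,F}^2$ with the local edge length rather than $h^{-1}$, so passing to $\|\nabla_d\eta_b\|_0\lesssim h^{-1/2}\|\nabla^2 u\|_{0,\partial\Omega}$ uses the shape regularity (quasi-uniformity in each coordinate direction) of the Cartesian partition already assumed in the paper, whereas the stabilization part $\3bar{\S}(\eta_b)-\eta_b\3bar_{\mathcal{E}_h}$ carries the global $h^{-1}$ by definition and needs no such remark.
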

\begin{proof}
The proof is the similar to the proof of Theorem 6.7 in \cite{sup_LWW2018}, and therefore the details are omitted here.
\end{proof}

\begin{corollary}\label{newsuperconvergence-H-1-10-10}
Let $u_{b}\in V_{b}$ such that  $u_b= Q_b g$ on $\partial\Omega$ be the weak Galerkin finite element solution of the model problem \eqref{model} in three dimensions arising from the scheme \eqref{SWG}. Assume that (1) the exact solution $u\in H^{3}(\Omega)$ of the model problem (\ref{model}) satisfies $u_{xx}=u_{yy}=u_{zz}$; (2) $\T_{h}$ is a uniform cubic partition $\O$ with $|e_x|=|e_y|=|e_z|$; and (3) the diffusive tensor $A=(a_{11}, 0, 0; 0, a_{22}, 0; 0, 0, a_{33})$ satisfies $a_{11}=a_{22}=a_{33}$. Denote by $e_b=Q_bu-u_b$. The following superconvergence result holds true:
\begin{equation*}\label{EQ:new-SuperC:053-generic}
\left(\sum_{T\in\T_{h}}\|\nabla_{d}{e_{b}} \|_{T}^{2}\right)^{\frac12} \leq Ch^{2}\|u\|_{3}.
\end{equation*}
\end{corollary}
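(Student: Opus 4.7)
The strategy is to apply the error identity \eqref{EQ:marker:new} from Theorem \ref{superconvergence-H-1-1} with a carefully chosen test function, exploiting the fact that under the hypothesis $u_b = Q_b g$ of this corollary the standard error $e_b = Q_b u - u_b$ already lies in $V_b^0$ (it vanishes on $\partial\Omega$), whereas the modified error $\widetilde{e_b}=e_b+h^2 w_b$ does not. Consequently Theorem \ref{superconvergence-H-1-2} cannot be invoked directly. Instead I would set $v_b = e_b$ in \eqref{EQ:marker:new} and expand $\widetilde{e_b}=e_b+h^2 w_b$ on the left-hand side. A key simplification arises: the cross term $h^2(A\nabla_d w_b,\nabla_d e_b)_h$ generated by this expansion matches the same term on the right-hand side of \eqref{EQ:marker:new} and cancels, leaving
\begin{equation*}
(A\nabla_d e_b,\nabla_d e_b)_h + \3bar S(e_b)-e_b\3bar_{\mathcal{E}_h}^2
= -h^2 \rho h^{-1}\sum_{T\in\mathcal{T}_h}\langle Q_b S(w_b)-w_b,\; Q_b S(e_b)-e_b\rangle_{\partial T} + R_4(e_b).
\end{equation*}

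The crux of the proof is then the bound $\3bar S(w_b)-w_b\3bar_{\mathcal{E}_h}\leq C\|u\|_3$, and this is where the three symmetry assumptions enter in an essential way. Writing $c:=a_{11}$ and $h:=|e_x|$ for brevity, a direct substitution into the piecewise definition of $w_b$ given in Theorem \ref{superconvergence-H-1-1} shows that under (1)--(3) every one of the six face branches collapses to the same expression
\begin{equation*}
w_b\big|_{F_p} \;=\; \beta\, Q_b u_{xx}\big|_{F_p}, \qquad \beta := \frac{\rho-6c}{6\rho},\qquad p=1,\ldots,6,
\end{equation*}
because each pair $(|e_y|^2 Q_b u_{yy},|e_y|Q_b q_{2y})$, $(|e_x|^2 Q_b u_{xx},|e_x|Q_b q_{1x})$, $(|e_z|^2 Q_b u_{zz},|e_z|Q_b q_{3z})$ reduces to $(h^2 Q_b u_{xx},\,hc\,Q_b u_{xx})$. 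Thus $w_b$ is, up to the fixed multiplicative constant $\beta$, the face $L^2$ projection of the $H^1$ function $u_{xx}$. Substituting this into Lemma \ref{lemma2.2}, one sees that $(S(w_b)-w_b)(M_p)$ is $\beta/6$ times a signed linear combination of the six averages $Q_b u_{xx}|_{F_q}$ whose coefficients sum to zero, and so it annihilates constants. A scaled Poincar\'e--trace inequality $\|u_{xx}-\overline{u_{xx}}_T\|_{\partial T}^2 \leq C h\,\|\nabla u_{xx}\|_T^2$ then yields $\|Q_b S(w_b)-w_b\|_{\partial T}^2 \leq C h\,\|u\|_{3,T}^2$ on each element, and multiplying by $\rho h^{-1}$ before summing delivers $\3bar S(w_b)-w_b\3bar_{\mathcal{E}_h}\leq C\|u\|_3$.

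With this estimate available, applying Cauchy--Schwarz to the boundary coupling term and the bound \eqref{a27} to $R_4(e_b)$ produces
\begin{equation*}
(A\nabla_d e_b,\nabla_d e_b)_h + \3bar S(e_b)-e_b\3bar_{\mathcal{E}_h}^2 \;\leq\; C h^2\|u\|_3\, \3bar S(e_b)-e_b\3bar_{\mathcal{E}_h}.
\end{equation*}
Young's inequality absorbs half of the stabilization norm on the right into the left, and the uniform ellipticity of $A$ then delivers $\left(\sum_{T\in\mathcal{T}_h}\|\nabla_d e_b\|_T^2\right)^{1/2}\leq C h^2\|u\|_3$, as desired. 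I expect the only genuine obstacle to be the algebraic verification that the six branches of the formula for $w_b$ all collapse to the same multiple of $Q_b u_{xx}$; once that reduction is in place, the argument becomes a clean rerun of the energy estimate used in Theorem \ref{superconvergence-H-1-2}, with the symmetry condition playing the role of eliminating the boundary layer that otherwise degrades the rate from $h^2$ to $h^{1.5}$ in Theorem \ref{superconvergence-H-1-53}.
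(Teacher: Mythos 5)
Your proof is correct and realizes exactly the route the paper sketches for this corollary: the error equation \eqref{sperconve-error-equation-10-15} tested with $v_b=e_b\in V_b^0$ (equivalently \eqref{EQ:marker:new} after the cross-term cancellation you note), Lemma \ref{lemma2.2} applied to $w_b$, and the bounds from Theorem \ref{superconvergence-H-1-1}, with the decisive observation that under the three symmetry hypotheses all six face branches of $w_b$ collapse to the same multiple $\beta\, Q_b u_{xx}$, so that $Q_b{\S}(w_b)-w_b$ annihilates elementwise constants and the coupling term gains the extra factor of $h$ needed for the ${\cal O}(h^2)$ rate. The only cosmetic quibble is setting $h=|e_x|$ (the paper's $h$ is the global mesh size, a fixed multiple of $|e_x|$ on a uniform cubic partition), which merely changes the constant $\beta$ by a bounded factor and affects nothing in the estimate.
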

\begin{proof}
Using Theorem \ref{superconvergence-H-1-1}, Lemma \ref{lemma2.2} and the error equation \eqref{sperconve-error-equation-10-15} concludes the corollary. Details are omitted here due to page limitation.
\end{proof}

\section{Numerical Experiments}\label{Section:NE}
In this section, a series of numerical tests will be demonstrated for the simplified WG algorithm \eqref{SWG} for solving the second order elliptic problem \eqref{model} in three dimensions to verify the superconvergence error estimates established in the precious sections.

The numerical tests are based on the lowest order (i.e., $k=1$) of weak functions on the uniform and non-uniform cubic partitions of the unit cube $\O=(0, 1)^3$. More precisely, the local weak finite element space is given by $V(T, 1)=\{v=\{v_{0},v_{b}\},v_{0} \in P_{1}(T),v_{b}  \in P_{0}(F), F\subset \partial T\}$, and $\nabla_{d}v|_T\in[P_{0}(T)]^{3}$.

Let $u$ be the exact solution of the model problem (\ref{model}). The error function $e_h$ is given by $e_h=Q_hu-u_h=\{e_0, e_b\}$ where $e_{0}=Q_{0}u-{\S}(u_{b})$ and $e_{b}=Q_{b}u-u_{b}$. The error functions are measured in various norms as follows:
\begin{equation*}
\begin{split}
&Discrete\ L^{\infty}- norm:\quad\|u-{\S}(u_{b})\|_{\infty,\star}=\max\limits_{T\in {\cal T}_h}\big|(u-{\S}(u_{b}))(M_{c})\big|,\\
&L^{2}- norm:\quad\|e_0\|_0= \Big(\sum_{T\in{\cal T}_{h}}\int_T|Q_{0}u-{\S}(u_{b})|^2dT\Big)^{1/2},               \\
&H^{1}- norm:\quad\|\nabla_d e_{b}\|_0=\Big(\sum_{T\in{\cal T}_{h}}\int_T|\nabla_d (Q_{b}u-u_{b})|^2dT\Big)^{1/2},\\
&Discrete\ W^{1, 1}- norm:\quad\|\nabla_d u_b-\nabla u\|_{1,\star}=\Big(\sum_{T\in{\cal T}_{h}}\int_T|\nabla_d u_b-\nabla u(M_{c})|^{2}dT\Big)^{1/2},\\
&W^{1,1}- seminorm:\quad\|e_{0}\|_{1,1}=\Big(\sum_{T\in{\cal T}_{h}}\int_T|\nabla (Q_{0}u- {\S}(u_{b}))|^{2}dT\Big)^{1/2},
\end{split}
\end{equation*}
where $M_{c}$ is the center of the cubic element $T$.

\subsection{Numerical experiments for constant diffusion tensor $A$}\label{subSection:NE1}

{\bf Test Case 1 (Homogeneous BVP)} In this set of tests,  the diffusive coefficient tensor $A$ is an identity matrix and the exact solution is $u =\sin(\pi x)\sin(\pi y)\sin(\pi z)$. This is a homogeneous boundary value problem on the domain $\O=(0, 1)^3$.

Tables \ref{test case 1 :rho6:UniformWP} - \ref{test case 1 :rho1:non-UniformWP} illustrate the numerical results on the uniform and non-uniform cubic partitions with the stabilization parameters $\rho=6$ and $\rho=1$, respectively. These results show that the convergence rate for the error $e_b$ in $H^{1}$-norm is of order ${\cal{O}}(h^2)$, which is consistent with the conclusion in Corollary \ref{Corollary:SuperC-nonhomo}. We also compute the convergence rates for the error functions in $||\cdot||_{1,\star}$ norm and $\|\cdot\|_{1,1}$ norm which seem to be in the superconvergence order of ${\cal{O}}(h^2)$, although there are not any corresponding theories available in this paper.

It is interesting to see from Table \ref{test case 1 :rho6:UniformWP} and Table \ref{test case 1 :rho1:UniformWP} that the numerical results corresponding to  stabilization parameters $\rho=1$ and $\rho=6$ are very close to each other.   Furthermore, we compute the numerical results for some other stabilization parameter $\rho = 0.01, 0.1, 2, 5$, and we found the numerical results are still very close to the results in Table \ref{test case 1 :rho6:UniformWP}. Due to page limitation, we shall not demonstrate the  numerical results for $\rho = 0.01, 0.1, 2, 5$ in this paper. Interested readers are welcome to draw their own conclusions for this phenomenon.

%

\begin{table}[htbp]\centering\scriptsize
\caption{Test Case 1: Convergence of the lowest order WG-FEM on the unit cubic domain with exact solution $u=\sin(\pi x)\sin(\pi y)\sin(\pi z)$, uniform cubic partitions, stabilization parameter $\rho=6$.}\label{test case 1 :rho6:UniformWP}
{
\setlength{\extrarowheight}{1.5pt}
\begin{center}
\begin{tabular}{|l|l|l|l|l|l|}
\hline $meshes$ &$\|u-\S(u_{b})\|_{\infty,\star}$& $\|e_0\|_0$ & $\|\nabla_d{e_{b}}\|_0$  &$\|\nabla_d u_b-\nabla u\|_{1,\star}$  &    $\|e_0\|_{1,1}$ \\
\hline
$4\times4\times4$     &2.4845e-02    &1.9393e-02    &1.8494e-01               &4.1467e-02     &1.6637e-01\\     \hline
$8\times8\times8$     &6.4194e-03    &4.6306e-03     &4.8626e-02                &1.1850e-02    &4.3758e-02 \\      \hline
$16\times16\times16$   &1.6069e-03    &1.1415e-03     &1.2310e-02                &3.0582e-03   &1.1079e-02   \\   \hline
$32\times32\times32$   &4.0164e-04   &2.8433e-04      &3.0872e-03          &7.7058e-04        &2.7784e-03\\   \hline
\hline
Rate        &2.00  &2.01  &2.00 & 1.99  &2.00 \\
\hline
\end{tabular}
\end{center}
}
\end{table}

\begin{table}[htbp]\centering\scriptsize
\caption{Test Case 1: Convergence of the lowest order WG-FEM on the unit cubic domain with exact solution $u=\sin(\pi x)\sin(\pi y)\sin(\pi z)$, non-uniform cubic partitions, stabilization parameter $\rho=6$, and $h=(|e_x|^2+|e_y|^2+|e_z|^2)^{\frac{1}{2}}$.}.\label{test case 2 :rho6:non-UniformWP}
{
\setlength{\extrarowheight}{1.5pt}
\begin{center}
\begin{tabular}{|l|l|l|l|l|l|}
\hline $meshes$ &$\|u-\S(u_{b})\|_{\infty,\star}$& $\|e_0\|_0$ & $\|\nabla_d{e_{b}}\|_0$  &$\|\nabla_d u_b-\nabla u\|_{1,\star}$  &    $\|e_0\|_{1,1}$ \\
\hline
$3\times4\times5$     &3.0558e-02    &2.3666e-02    &2.1210e-01               &5.2931e-02     &1.9037e-01\\     \hline
$6\times8\times10$     &6.3404e-03    &5.6370e-03     &5.5494e-02                &1.3847e-02    &4.9893e-02 \\      \hline
$12\times16\times20$   &1.5721e-03    &1.3928e-03     &1.4036e-02                &3.5264e-03   &1.2625e-02   \\       \hline
$24\times32\times40$   &3.9192e-04    &3.4718e-04     &3.5192e-03            &8.8605e-04     &3.1660e-03\\           \hline
\hline
Rate        &2.00          &2.00      &2.00                      & 1.99  &2.00 \\
\hline
\end{tabular}
\end{center}
}
\end{table}

%
%

\begin{table}[htbp]\centering\scriptsize
\caption{Test Case 1: Convergence of the lowest order WG-FEM on the unit cubic domain with exact solution $u=\sin(\pi x)\sin(\pi y)\sin(\pi z)$, uniform cubic partitions, stabilization parameter $\rho=1
$.}\label{test case 1 :rho1:UniformWP}
{
\setlength{\extrarowheight}{1.5pt}
\begin{center}
\begin{tabular}{|l|l|l|l|l|l|}
\hline $meshes$ &$\|u-\S(u_{b})\|_{\infty,\star}$& $\|e_0\|_0$ & $\|\nabla_d{e_{b}}\|_0$  &$\|\nabla_d u_b-\nabla u\|_{1,\star}$  &    $\|e_0\|_{1,1}$ \\
\hline
$4\times4\times4$     &2.4845e-02    &1.9393e-02    &1.8494e-01               &4.1467e-02     &1.6637e-01\\     \hline
$8\times8\times8$     &6.4194e-03    &4.6306e-03     &4.8626e-02                &1.1850e-02    &4.3758e-02 \\      \hline
$16\times16\times16$   &1.6069e-03    &1.1415e-03     &1.2310e-02                &3.0582e-03   &1.1079e-02   \\       \hline
$32\times32\times32$   &4.0164e-04   &2.8433e-04      &3.0872e-03          &7.7058e-04        &2.7784e-03\\           \hline
\hline
Rate        &2.00          &2.01      &2.00                      & 1.99  &2.00 \\
\hline
\end{tabular}
\end{center}
}
\end{table}

\begin{table}[htbp]\centering\scriptsize
\caption{Test Case 1: Convergence of the lowest order WG-FEM on the unit cubic domain with exact solution $u=\sin(\pi x)\sin(\pi y)\sin(\pi z)$, non-uniform cubic partitions, stabilization parameter $\rho=1$, and $h=\max(|e_x|,|e_y|,|e_z|)$.}\label{test case 1 :rho1:non-UniformWP}
{
\setlength{\extrarowheight}{1.5pt}
\begin{center}
\begin{tabular}{|l|l|l|l|l|l|}
\hline $meshes$ &$\|u-\S(u_{b})\|_{\infty,\star}$& $\|e_0\|_0$ & $\|\nabla_d{e_{b}}\|_0$  &$\|\nabla_d u_b-\nabla u\|_{1,\star}$  &    $\|e_0\|_{1,1}$ \\
\hline
$3\times4\times5$     &2.2605e-02    &2.5271e-02    &2.1817e-01               &6.9417e-02     &1.9998e-01\\     \hline
$6\times8\times10$     &3.4472e-03    &6.6425e-03     &6.1177e-02                &2.8983e-02    &5.7273e-02 \\      \hline
$12\times16\times20$   &7.3558e-04    &1.6886e-03     &1.5931e-02                &8.3053e-03   &1.5017e-02   \\       \hline
$24\times32\times40$   &1.7500e-04    &4.2391e-04   &4.0277e-03                  &2.1491e-03   &3.8034e-03\\           \hline
\hline
Rate        &2.07          &1.99      &1.98  & 1.95  &1.98 \\
\hline
\end{tabular}
\end{center}
}
\end{table}

{\bf Test Case 2 (Nonhomogeneous BVP)} In this set of tests, the exact solution is $u=\cos(x)\sin(y)\cos(z)$ and the coefficient matrix $A$ is an identity matrix. This is a non-homogeneous boundary value problem on the domain $\O=(0, 1)^3$.

In Tables \ref{test case 2 :rho1:UniformPWP}-\ref{test case 2 :rho1:non-UniformPWP}, we employ the perturbed $L^{2}$ projection defined in \eqref{EQ:PL2:01}-\eqref{EQ:PL2:03} and stabilization parameter $\rho=1$. The numerical results demonstrate that $\|\nabla_d{e_{b}}\|_0$ converges in the superconvergence order of ${\cal{O}}(h^2)$, which perfectly consists with Corollary \ref{Corollary:SuperC-nonhomo}.
In Tables \ref{test case 2 :rho1:UniformWP} - \ref{test case 2 :rho1:non-UniformWP},  we use the usual $L^{2}$ projection  (i.e., the perturbation term $\varepsilon_b=0$) and stabilization parameter $\rho=1$. Table \ref{test case 2 :rho1:UniformWP} shows the convergence rate for $\| \nabla_d e_b\|_0$ is in a superconvergence order of ${\cal{O}}(h^2)$, which is in great consistency with   Corollary \ref{newsuperconvergence-H-1-10-10}, since the exact solution satisfies $u_{xx}=u_{yy}=u_{zz}$, the coefficient matrix $A$ is an identity matrix, and the partitions is uniform. Table \ref{test case 2 :rho1:non-UniformWP} shows that the superconvergence order in $H^{1}$- norm for $e_{b}$ seems to be $r=1.8$, which is higher than our theory $r=1.5$ in Theorem \ref{superconvergence-H-1-53}.  Tables \ref{test case 2 :rho1:UniformPWP}-\ref{test case 2 :rho1:non-UniformWP} show that a perturbed $L^{2}$ projection does provide a better numerical solution than the usual $L^{2}$ projection.

 Tables \ref{test case 2 :rho6:UniformPWP} - \ref{Example2:rho1:SquareWP:test4-111} show the numerical results for  the stabilization parameter $\rho=6$. Note that the perturbed $L^{2}$ projection defined in \eqref{EQ:PL2:01}-\eqref{EQ:PL2:03} turns to be the usual $L^{2}$ projection for the stabilization parameter $\rho=6$ on the uniform partitions. Table \ref{test case 2 :rho6:UniformPWP} shows the convergence rate for $\|\nabla_d{e_{b}}\|_0$ for the usual $L^{2}$ projection on the uniform partitions is in the supconvergence order of ${\cal{O}}(h^2)$ which is consistent with Corollary \ref{Corollary:SuperC-nonhomo}. In Table \ref{test case 2 :rho6:non-UniformPWP},  the perturbed $L^2$ projection on the non-uniform partitions is used and the numerical results show the superconvergence order for  $\|\nabla_d{e_{b}}\|_0$ is ${\cal{O}}(h^2)$, which consists with Corollary \ref{Corollary:SuperC-nonhomo}. In Table \ref{Example2:rho1:SquareWP:test4-111}, the usual  $L^2$ projection on non-uniform partitions is employed and it seems that  the convergence rate for  $\|\nabla_d{e_{b}}\|_0$ is in the order of ${\cal{O}}(h^{1.95})$, which is higher than  ${\cal{O}}(h^{1.5})$ in Theorem \ref{superconvergence-H-1-53}.

%
%
%
%

\begin{table}[htbp]\centering\scriptsize
\caption{Test Case 2: Convergence of the lowest order WG-FEM on the unit cubic domain with exact solution $u=\cos( x)\sin( y)\cos( z)$, uniform cubic partitions, stabilization parameter $\rho=1$, and perturbed $L^2$ projection of the Dirichlet boundary data $g$ by \eqref{EQ:PL2:01}-\eqref{EQ:PL2:03}.}\label{test case 2 :rho1:UniformPWP}
{
\setlength{\extrarowheight}{1.5pt}
\begin{center}
\begin{tabular}{|l|l|l|l|l|l|}
\hline $meshes$ &$\|u-\S(u_{b})\|_{\infty,\star}$& $\|e_0\|_0$ & $\|\nabla_d{e_{b}}\|_0$  &$\|\nabla_d u_b-\nabla u\|_{1,\star}$  &    $\|e_0\|_{1,1}$ \\
\hline
$4\times4\times4$     &2.8491e-02    &1.6837e-02    &3.5512e-02               &3.0139e-02     &3.4796e-02\\     \hline
$8\times8\times8$     &7.7591e-03    &4.2216e-03     &8.9019e-03                &7.5565e-03    &8.7222e-03 \\      \hline
$16\times16\times16$   &2.0045e-03    &1.0576e-03     &2.2294e-03                &1.8930e-03   &2.1845e-03   \\       \hline
$32\times32\times32$   &5.0788e-04     &2.6457e-04    &5.5769e-04               &4.7357e-04     &5.4645e-04\\           \hline
\hline
Rate        &1.98         &2.00      &2.00                      & 2.00  &2.00 \\
\hline
\end{tabular}
\end{center}
}
\end{table}
%
\begin{table}[htbp]\centering\scriptsize
\caption{Test Case 2: Convergence of the lowest order WG-FEM on the unit cubic domain with exact solution $u=\cos( x)\sin( y)\cos( z)$, non-uniform cubic partitions, stabilization parameter $\rho=1$, $h=\max(|e_x|,|e_y|,|e_z|)$, and perturbed $L^2$ projection of the Dirichlet boundary data $g$ by \eqref{EQ:PL2:01}-\eqref{EQ:PL2:03}.}\label{test case 2 :rho1:non-UniformPWP}
{
\setlength{\extrarowheight}{1.5pt}
\begin{center}
\begin{tabular}{|l|l|l|l|l|l|}
\hline $meshes$ &$\|u-\S(u_{b})\|_{\infty,\star}$& $\|e_0\|_0$ & $\|\nabla_d{e_{b}}\|_0$  &$\|\nabla_d u_b-\nabla u\|_{1,\star}$  &    $\|e_0\|_{1,1}$ \\
\hline
$3\times4\times5$     &4.5196e-02    &2.5473e-02    &5.1833e-02               &4.5731e-02     &5.1101e-02\\     \hline
$6\times8\times10$     &1.2338e-02    &6.3927e-03     &1.2987e-02                &1.1459e-02    &1.2803e-02 \\      \hline
$12\times16\times20$   &3.1901e-03    &1.6029e-03     &3.2538e-03                &2.8716e-03   &3.2078e-03   \\       \hline
$24\times32\times40$   &8.0847e-04    &4.0115e-04     &8.1411e-04               &7.1856e-04    &8.0261e-04\\           \hline
\hline
Rate                    &1.98          &2.00      &2.00                      &2.00         &2.00 \\
\hline
\end{tabular}
\end{center}
}
\end{table}

\begin{table}[htbp]\centering\scriptsize
\caption{Test Case 2: Convergence of the lowest order WG-FEM on the unit cubic domain with exact solution $u=\cos( x)\sin( y)\cos( z)$, uniform cubic partitions, stabilization parameter $\rho=1$, and $L^2$ projection of the Dirichlet boundary data $g$.}\label{test case 2 :rho1:UniformWP}
{
\setlength{\extrarowheight}{1.5pt}
\begin{center}
\begin{tabular}{|l|l|l|l|l|l|}
\hline $meshes$ &$\|u-\S(u_{b})\|_{\infty,\star}$& $\|e_0\|_0$ & $\|\nabla_d{e_{b}}\|_0$  &$\|\nabla_d u_b-\nabla u\|_{1,\star}$  &    $\|e_0\|_{1,1}$ \\
\hline
$4\times4\times4$     &9.6021e-03    &1.7217e-03    &1.6445e-03               &5.3190e-03     &1.6266e-03\\     \hline
$8\times8\times8$     &2.5944e-03    &4.3709e-04     &4.0413e-04                &1.3353e-03    &4.0262e-04 \\      \hline
$16\times16\times16$   &6.6871e-04    &1.1006e-04     &1.0087e-04                &3.3482e-04   &1.0093e-04   \\       \hline
$32\times32\times32$   &1.6933e-04    &2.7576e-05     &2.5230e-05               &8.3791e-05    &2.5280e-05\\           \hline
\hline
Rate        &1.98          &2.00      &2.00                      & 2.00  &2.00 \\
\hline
\end{tabular}
\end{center}
}
\end{table}
\begin{table}[htbp]\centering\scriptsize
\caption{Test Case 2: Convergence of the lowest order WG-FEM on the unit cubic domain with exact solution $u=\cos(x)\sin(y)\cos(z)$, non-uniform cubic partitions, stabilization parameter $\rho=1$, $h=\max(|e_x|,|e_y|,|e_z|)$, and $L^2$ projection of the Dirichlet boundary data $g$.}\label{test case 2 :rho1:non-UniformWP}
{
\setlength{\extrarowheight}{1.5pt}
\begin{center}
\begin{tabular}{|l|l|l|l|l|l|}
\hline $meshes$ &$\|u-\S(u_{b})\|_{\infty,\star}$& $\|e_0\|_0$ & $\|\nabla_d{e_{b}}\|_0$  &$\|\nabla_d u_b-\nabla u\|_{1,\star}$  &    $\|e_0\|_{1,1}$ \\
\hline
$3\times4\times5$     &1.0491e-02    &1.7126e-03    &9.4802e-03               &1.1082e-02     &9.5562e-03\\     \hline
$6\times8\times10$     &3.1746e-03    &4.4825e-04     &3.4760e-03                &3.7710e-03    &3.4972e-03 \\      \hline
$12\times16\times20$   &8.7237e-04    &1.1627e-04    &1.0666e-03                &1.1282e-03     &1.0715e-03  \\       \hline
$24\times32\times40$    &2.3618e-04    &2.9556e-05   &3.0668e-04             &3.2021e-04        &3.0782e-04\\           \hline
\hline
Rate                     &1.89          &1.98      &1.80                      & 1.82              &1.80 \\
\hline
\end{tabular}
\end{center}
}
\end{table}
%

\begin{table}[htbp]\centering\scriptsize
\caption{Test Case 2: Convergence of the lowest order WG-FEM on the unit cubic domain with exact solution $u=\cos( x)\sin( y)\cos( z)$, uniform cubic partitions, stabilization parameter $\rho=6$, and $L^2$ projection of the Dirichlet boundary data $g$.}
\label{test case 2 :rho6:UniformPWP}
{\setlength{\extrarowheight}{1.5pt}
\begin{center}
\begin{tabular}{|l|l|l|l|l|l|}
\hline $meshes$ &$\|u-\S(u_{b})\|_{\infty,\star}$& $\|e_0\|_0$ & $\|\nabla_d{e_{b}}\|_0$  &$\|\nabla_d u_b-\nabla u\|_{1,\star}$  &    $\|e_0\|_{1,1}$ \\
\hline
$4\times4\times4$     &9.6682e-03    &1.7514e-03    &1.5820e-03               &5.3468e-03     &1.5844e-03\\     \hline
$8\times8\times8$     &2.6003e-03    &4.4054e-04     &4.0144e-04                &1.3398e-03    &4.0227e-04 \\      \hline
$16\times16\times16$   &6.6911e-04    &1.1033e-04     &1.0080e-04                &3.3520e-04   &1.0104e-04   \\       \hline
$32\times32\times32$   &1.6935e-04     &2.7594e-05   &2.5229e-05                 &8.3818e-05    &2.5291e-05\\           \hline
\hline
Rate        &1.98          &2.00      &2.00 & 2.00  &2.00 \\
\hline
\end{tabular}
\end{center}
}
\end{table}
\begin{table}[htbp]\centering\scriptsize
\caption{Test Case 2: Convergence of the lowest order WG-FEM on the unit cubic domain with exact solution $u=\cos( x)\sin( y)\cos( z)$, non-uniform cubic partitions, stabilization parameter $\rho=6$, $h=(|e_x|^2+|e_y|^2+|e_z|^2)^{\frac{1}{2}}$, and perturbed $L^2$ projection of the Dirichlet boundary data $g$ by \eqref{EQ:PL2:01}-\eqref{EQ:PL2:03}.}\label{test case 2 :rho6:non-UniformPWP}
{
\setlength{\extrarowheight}{1.5pt}
\begin{center}
\begin{tabular}{|l|l|l|l|l|l|}
\hline $meshes$ &$\|u-\S(u_{b})\|_{\infty,\star}$& $\|e_0\|_0$ & $\|\nabla_d{e_{b}}\|_0$  &$\|\nabla_d u_b-\nabla u\|_{1,\star}$  &    $\|e_0\|_{1,1}$ \\
\hline
$3\times4\times5$     &4.5849e-03    &1.1666e-03    &5.8119e-03               &7.9357e-04     &5.0500e-03\\     \hline
$6\times8\times10$     &1.2310e-03    &2.8277e-04     &1.4518e-03                &1.9305e-04    &1.2608e-03 \\      \hline
$12\times16\times20$   &3.1661e-04    &7.0116e-05     &3.6286e-04                &4.8178e-05   &3.1510e-04   \\       \hline
$24\times32\times40$   &8.0124e-05    &1.7493e-05     &9.0708e-05               &1.2045e-05    &7.8768e-05\\           \hline
\hline
Rate        &1.98         &2.00      &2.00                      & 2.00  &2.00 \\
\hline
\end{tabular}
\end{center}
}
\end{table}
\begin{table}[htbp]\centering\scriptsize
\caption{Test Case 2: Convergence of the lowest order WG-FEM on the unit cubic domain with exact solution $u=\cos(x)\sin(y)\cos(z)$, non-uniform cubic partitions, stabilization parameter $\rho=6$, $h=(|e_x|^2+|e_y|^2+|e_z|^2)^{\frac{1}{2}}$, and $L^2$ projection of the Dirichlet boundary data $g$.}\label{Example2:rho1:SquareWP:test4-111}
{
\setlength{\extrarowheight}{1.5pt}
\begin{center}
\begin{tabular}{|l|l|l|l|l|l|}
\hline $meshes$ &$\|u-\S(u_{b})\|_{\infty,\star}$& $\|e_0\|_0$ & $\|\nabla_d{e_{b}}\|_0$  &$\|\nabla_d u_b-\nabla u\|_{1,\star}$  &    $\|e_0\|_{1,1}$ \\
\hline
$3\times4\times5$     &1.0575e-02    &1.8212e-03    &2.0566e-03               &6.1443e-03     &1.9932e-03\\     \hline
$6\times8\times10$     &2.8577e-03    &4.5857e-04     &5.2697e-04                &1.5434e-03    &5.1248e-04 \\      \hline
$12\times16\times20$   &7.3831e-04    &1.1496e-04     &1.3610e-04                &3.8768e-04   &1.3270e-04   \\       \hline
$24\times32\times40$   &1.8753e-04    &2.8764e-05     &3.5157e-05 & 9.7348e-05   &3.4342e-05\\           \hline
\hline
Rate        &1.98          &2.00      &1.95  & 1.99  &1.95 \\
\hline
\end{tabular}
\end{center}
}
\end{table}

{\bf Test Case 3 (Nonhomogeneous BVP)} In this group of numerical tests, the coefficient tensor $A$ is an identity matrix, the stabilization parameter is $\rho=1$, and the exact solution is $u=\cos(\pi x)\cos(\pi y)exp(z)$.  This is a nonhomogeneous boundary value problem.

Tables \ref{test case 3 :rho1:UniformWP}-\ref{test case 3 :rho1:UniformSWP} compare the performance on uniform cubic partitions when the perturbed $L^2$ projection and the usual $L^2$ projection are used, respectively.
 Table \ref{test case 3 :rho1:UniformWP} presents the convergence order for $\|\nabla_d e_b\|_0$ is in the superconvergence order of ${\cal{O}}(h^{1.7})$  for the usual $L^2$ projection, which is better than the theory ${\cal{O}}(h^{1.5})$ in Theorem \ref{superconvergence-H-1-53}. Table \ref{test case 3 :rho1:UniformSWP} demonstrates that the superconvergence rate for $\|\nabla_d{e_{b}}\|$ is ${\cal{O}}(h^2)$ for the perturbed $L^2$ projection, which consists perfectly with Corollary \ref{Corollary:SuperC-nonhomo}.

\begin{table}[htbp]\centering\scriptsize
\caption{Test Case 3: Convergence of the lowest order WG-FEM on the unit cubic domain with exact solution $u=\cos(\pi x)\cos(\pi y)exp(z)$, uniform cubic partitions, stabilization parameter $\rho=1$, and $L^2$ projection of the Dirichlet boundary data  $g$.}\label{test case 3 :rho1:UniformWP}
{
\setlength{\extrarowheight}{1.5pt}
\begin{center}
\begin{tabular}{|l|l|l|l|l|l|}
\hline $meshes$ &$\|u-\S(u_{b})\|_{\infty,\star}$& $\|e_0\|_0$ & $\|\nabla_d{e_{b}}\|_0$  &$\|\nabla_d u_b-\nabla u\|_{1,\star}$  &    $\|e_0\|_{1,1}$ \\
\hline
$4\times4\times4$     &1.3006e-01    &4.2491e-01    &4.8231e-01               &5.4528e-01     &4.8728e-01\\     \hline
$8\times8\times8$     &5.6002e-02    &1.3135e-02     &1.9955e-01                &2.1442e-01    &2.0121e-01 \\      \hline
$16\times16\times16$   &2.4039e-02    &3.7842e-03     &6.8308e-02                &7.1643e-02   &6.8723e-02   \\       \hline
$32\times32\times32$  &8.1938e-02     &1.0119e-03     &2.0898e-02                &2.1636e-02    &2.0993e-02\\           \hline
\hline
Rate    &1.55   &1.90  &1.71 & 1.73  &1.71 \\
\hline
\end{tabular}
\end{center}
}
\end{table}

\begin{table}[htbp]\centering\scriptsize
\caption{Test Case 3: Convergence of the lowest order WG-FEM on the unit cubic domain with exact solution $u=\cos(\pi x)\cos(\pi y)exp(z)$, uniform cubic partitions, stabilization parameter $\rho=1$, and perturbed $L^2$ projection of the Dirichlet boundary data $g$ by \eqref{EQ:PL2:01}-\eqref{EQ:PL2:03}.}\label{test case 3 :rho1:UniformSWP}
{
\setlength{\extrarowheight}{1.5pt}
\begin{center}
\begin{tabular}{|l|l|l|l|l|l|}
\hline $meshes$ &$\|u-\S(u_{b})\|_{\infty,\star}$& $\|e_0\|_0$ & $\|\nabla_d{e_{b}}\|_0$  &$\|\nabla_d u_b-\nabla u\|_{1,\star}$  &    $\|e_0\|_{1,1}$ \\
\hline
$4\times4\times4$     &4.5616e-01    &2.4448e-01    &9.6009e-01               &7.8292e-01     &9.2772e-01\\     \hline
$8\times8\times8$     &1.4466e-01    &6.0824e-02     &2.4368e-01                &1.9844e-01    &2.3531e-01 \\      \hline
$16\times16\times16$   &3.9370e-02    &1.5268e-02     &6.1494e-02                &5.0097e-02   &5.9376e-02   \\       \hline
$32\times32\times32$  &1.0149e-02      &3.8254e-03    &1.5432e-02                &1.2577e-02    &1.4901e-02\\           \hline
\hline
Rate                        &1.96          &2.00       &1.99                           & 1.99  &1.99 \\
\hline
\end{tabular}
\end{center}
}
\end{table}
Tables \ref{test case 3 :rho1:non-UniformPWP}-\ref{test case 3 :rho1:non-UniformWP} compare the performance   on non-uniform cubic partitions for the perturbed $L^2$ projection and the usual $L^2$ projection, respectively.  The convergence order shown in Table \ref{test case 3 :rho1:non-UniformPWP} is in good consistency with the theory  ${\cal{O}}(h^{2})$. The convergence order in Table \ref{test case 3 :rho1:non-UniformWP} seems to be in the order ${\cal{O}}(h^{1.7})$, which outforms the theory ${\cal{O}}(h^{1.5})$.

\begin{table}[htbp]\centering\scriptsize
\caption{Test Case 3: Convergence of the lowest order WG-FEM on the unit cubic domain with exact solution $u=\cos(\pi x)\cos(\pi y)exp(z)$, non-uniform cubic partitions, stabilization parameter $\rho=1$, $h=\max(|e_x|,|e_y|,|e_z|)$, and perturbed $L^2$ projection of the Dirichlet boundary data $g$ by \eqref{EQ:PL2:01}-\eqref{EQ:PL2:03}.}\label{test case 3 :rho1:non-UniformPWP}
{
\setlength{\extrarowheight}{1.5pt}
\begin{center}
\begin{tabular}{|l|l|l|l|l|l|}
\hline $meshes$ &$\|u-\S(u_{b})\|_{\infty,\star}$& $\|e_0\|_0$ & $\|\nabla_d{e_{b}}\|_0$  &$\|\nabla_d u_b-\nabla u\|_{1,\star}$  &    $\|e_0\|_{1,1}$ \\
\hline
$3\times4\times5$     &8.2655e-01    &4.4114e-01    &1.5682e+00               &1.3170e+00     &1.5251e+00\\     \hline
$6\times8\times10$     &2.7024e-01    &1.0965e-01     &3.9785e-01                &3.3358e-01    &3.8666e-01 \\      \hline
$12\times16\times20$   &7.3876e-02    &2.7530e-02     &1.0055e-01                &8.4341e-02   &9.7720e-02   \\       \hline
$24\times32\times40$   &1.9039e-02     &6.9048e-03    &2.5266e-02                  &2.1204e-02    &2.4556e-02\\           \hline
\hline
Rate                        &1.96         &2.00      &1.99                     & 1.99            &1.99 \\
\hline
\end{tabular}
\end{center}
}
\end{table}
\begin{table}[htbp]\centering\scriptsize
\caption{Test Case 3: Convergence of the lowest order WG-FEM on the unit cubic domain with exact solution $u=\cos(\pi x)\cos(\pi y)exp(z)$, non-uniform cubic partitions, stabilization parameter $\rho=1$, $h=\max(|e_x|,|e_y|,|e_z|)$, and $L^2$ projection of the Dirichlet boundary data $g$.}\label{test case 3 :rho1:non-UniformWP}
{
\setlength{\extrarowheight}{1.5pt}
\begin{center}
\begin{tabular}{|l|l|l|l|l|l|}
\hline $meshes$ &$\|u-\S(u_{b})\|_{\infty,\star}$& $\|e_0\|_0$ & $\|\nabla_d{e_{b}}\|_0$  &$\|\nabla_d u_b-\nabla u\|_{1,\star}$  &    $\|e_0\|_{1,1}$ \\
\hline
$3\times4\times5$     &1.5216e-01    &6.5348e-02    &6.6794e-01               &7.5389e-01     &6.7425e-01\\     \hline
$6\times8\times10$     &6.3091e-02    &2.3785e-02     &2.9490e-01                &3.1550e-01    &2.9745e-01 \\      \hline
$12\times16\times20$   &3.1023e-02   &6.9513e-03       &1.0408e-01             &1.0874e-01     &1.0475e-01\\       \hline
$24\times32\times40$   &1.0836e-02    &1.8540e-03      &3.2268e-02             &3.3300e-02     &3.2422e-02\\           \hline
\hline
Rate                     &1.52         &1.91             &1.69                          & 1.71       &1.69 \\
\hline
\end{tabular}
\end{center}
}
\end{table}

{\bf Test Case 4 (Nonhomogeneous BVP)}  In this test, the exact solution is $u=\sin(x)\sin(y)\sin(z)$, the diffusive tensor is $A=\bigl[\begin{smallmatrix}
10 & 3 & 1\\
3 & 2 & 1\\
1 & 1 & 2
\end{smallmatrix} \bigr]$, and the stabilization parameter is $\rho=1$. The numerical results shown in Table \ref{test case 4 :rho1:non-non-UniformCWP}  are based on the non-uniform cubic partitions and the usual $L^2$ projection. The numerical performance in Table \ref{test case 4 :rho1:non-non-UniformCWP} is in great consistency with Theorem \ref{superconvergence-H-1-53}.

\begin{table}[htbp]\centering\scriptsize
\caption{Test Case 4: Convergence of the lowest order WG-FEM on the unit cubic domain with exact solution $u=\sin( x)\sin( y)\sin(z)$, non-uniform cubic partitions, stabilization parameter $\rho=1$, $h=\max(|e_x|,|e_y|,|e_z|)$, and $L^2$ projection of the Dirichlet boundary data  $g$. The coefficient matrix is $a_{11}=10,a_{12}=3,a_{13}=1,$ and $a_{22}=2$,$a_{23}=1$,$a_{33
}=2$.}\label{test case 4 :rho1:non-non-UniformCWP}
{
\setlength{\extrarowheight}{1.5pt}
\begin{center}
\begin{tabular}{|l|l|l|l|l|l|}
\hline $meshes$ &$\|u-\S(u_{b})\|_{\infty,\star}$& $\|e_0\|_0$ & $\|\nabla_d{e_{b}}\|_0$  &$\|\nabla_d u_b-\nabla u\|_{1,\star}$  &    $\|e_0\|_{1,1}$ \\
\hline
$3\times4\times5$     &1.3451e-02    &7.2295e-03    &5.5726e-02               &5.5713e-02     &5.5722e-02\\     \hline
$6\times8\times10$     &8.6020e-03    &3.4567e-03     &2.9073e-02                &2.9050e-02    &2.9071e-02 \\      \hline
$12\times16\times20$   &4.2236e-03    &1.2471e-03     &1.1590e-02                &1.1583e-02   &1.1589e-02   \\       \hline
$24\times32\times40$  &1.5369e-03      &3.6709e-04    &3.9243e-03               &3.9227e-03     &3.9242e-03\\           \hline
\hline
Rate                   &1.46          &1.76     &1.56                     & 1.56  &1.56 \\
\hline
\end{tabular}
\end{center}
}
\end{table}

{\bf Test Case 5 (Nonhomogeneous BVP)} This test is in the following configuration: (1) The coefficient tensor $A$ is an identity matrix; (2) the stabilization parameter $\rho=1$; (3) the exact solution is $u=\cos(\pi x)\sin(\pi y)\cos(\pi z)$. The non-uniform cubic partition is generated by perturbing the uniform $N\times N\times N$ cubic partition with a random noise. More precisely, for any element $T=[x_{i},x_{i+1}]\times[y_{j},y_{j+1}]\times[z_{s},z_{s+1}]$ of the uniform $N\times N\times N$ cubic partition,  $x_{i+1}$, $y_{j+1}$ and $z_{s+1}$ are adjusted as follows: $
x_{i+1}^*=x_{i+1} + 0.2(\mbox{rand}(1)-0.5)/N$,
$y_{j+1}^*=y_{j+1} + 0.2(\mbox{rand}(1)-0.5)/N$, and
$z_{s+1}^*=z_{s+1} + 0.2(\mbox{rand}(1)-0.5)/N$,
where $\mbox{rand}(1)$ is the Matlab function which returns to a single uniformly distributed random number in $(0, 1)$.  The random numbers $\mbox{rand}(1)=\{0.141886,0.933993,0.031833\}$, $\mbox{rand}(1)=\{0.959492,0.392227,0.823457  \}$, and $\mbox{rand}(1)=\{0.421761,0.678735,0.276922   \}$ are used in the $x$-, $y$- and $z$- directions, respectively. Numerical results are presented in Tables \ref{test case 5 :rho1:non-non-Uniform} - \ref{test case 5 :rho1:non-non-UniformCWP} where the usual $L^2$ projection and the perturbed  $L^2$ projection are employed, respectively. Table \ref{test case 5 :rho1:non-non-Uniform}  demonstrates that the superconvergence order is much better than our theory for the usual $L^2$ projection.  Table \ref{test case 5 :rho1:non-non-UniformCWP}
shows that the superconvergence rate is in good consistency with the theory for the perturbed  $L^2$ projection.

\begin{table}[htbp]\centering\scriptsize
\caption{Test Case 5: Convergence of the lowest order WG-FEM on the unit cubic domain $(0,1)^{3}$ with exact solution $u=\cos(\pi x)\sin(\pi y)\cos(\pi z)$, non-uniform cubic partitions, the coefficient matrix is identity, stabilization parameter $\rho=1$, and $L^2$ projection of the Dirichlet boundary data $g$.}\label{test case 5 :rho1:non-non-Uniform}
{
\setlength{\extrarowheight}{1.5pt}
\begin{center}
\begin{tabular}{|l|l|l|l|l|l|}
\hline $meshes$ &$\|u-\S(u_{b})\|_{\infty,\star}$& $\|e_0\|_0$ & $\|\nabla_d{e_{b}}\|_0$  &$\|\nabla_d u_b-\nabla u\|_{1,\star}$  &    $\|e_0\|_{1,1}$ \\
\hline
$4\times4\times4$     &8.8874e-02    &1.0715e-02     &1.0106e-01                 &1.2318e-01     &9.0524e-02\\     \hline
$8\times8\times8$     &2.9802e-02    &2.6654e-03     &2.7306e-02                 &3.4129e-02     &2.5044e-02 \\      \hline
$16\times16\times16$  &8.3823e-03    &7.2443e-04     &7.7080e-03                 &9.5494e-03     &7.2704e-03   \\       \hline
$32\times32\times32$  &2.2344e-03    &1.8834e-04     &2.1549e-03                 &2.6033e-03     &2.0642e-03\\           \hline
\hline
Rate                   &1.91          &1.94     &1.84                     & 1.88  &1.82 \\
\hline
\end{tabular}
\end{center}
}
\end{table}

\begin{table}[htbp]\centering\scriptsize
\caption{Test Case 5: Convergence of the lowest order WG-FEM on the unit cubic domain $(0,1)^{3}$ with exact solution $u=\cos(\pi x)\sin(\pi y)\cos(\pi z)$, non-uniform cubic partitions, the coefficient matrix is identity, stabilization parameter $\rho=1$, and perturbed $L^2$ projection of the Dirichlet boundary data $g$ by \eqref{EQ:PL2:01}-\eqref{EQ:PL2:03}.}\label{test case 5 :rho1:non-non-UniformCWP}
{
\setlength{\extrarowheight}{1.5pt}
\begin{center}
\begin{tabular}{|l|l|l|l|l|l|}
\hline $meshes$ &$\|u-\S(u_{b})\|_{\infty,\star}$& $\|e_0\|_0$ & $\|\nabla_d{e_{b}}\|_0$  &$\|\nabla_d u_b-\nabla u\|_{1,\star}$  &    $\|e_0\|_{1,1}$ \\
\hline
$4\times4\times4$     &2.6832e-01    &1.5400e-01    &9.6036e-01                 &8.2564e-01     &9.4258e-01\\     \hline
$8\times8\times8$     &9.8770e-02    &3.9531e-02     &2.5404e-01                &2.1932e-01    &2.4937e-01 \\      \hline
$16\times16\times16$  &2.8979e-02    &1.0126e-02     &6.5344e-02                &5.6602e-02   &6.4164e-02   \\       \hline
$32\times32\times32$  &7.6431e-03    &2.5553e-03    &1.6504e-02                 &1.4316e-02     &1.6208e-02\\           \hline
\hline
Rate                   &1.92          &1.99     &1.99                     & 1.99  &1.99 \\
\hline
\end{tabular}
\end{center}
}
\end{table}

{\bf Test Case 6 (Nonhomogeneous BVP)} This test has the following configuration: (1) The coefficient tensor $A$ is an identity matrix; (2) the stabilization parameter is $\rho=1$; (3) the exact solution is $u=\cos(\pi x)\sin(\pi y)\cos(\pi z)$. The non-uniform cubic partitions is obtained in the same perturbation method as in Test Case 5. Table  \ref{Example6:rho1:cubicWP:testmesh5-2-6-14:14} shows that the convergence order for  $\|\nabla_d{e_{b}}\|_0$ with the usual $L^2$ projection is in the superconvergence order ${\cal{O}}(h^{1.8})$ which outperforms  the result ${\cal{O}}(h^{1.5})$  in Theorem \ref{superconvergence-H-1-53}.
Table \ref{Example6:rho1:cubicPWP:testmesh5-2-6-14:14} shows that the superconvergence order for  $\|\nabla_d{e_{b}}\|_0$  is in the order ${\cal{O}}(h^{2})$ with the perturbed $L^2$ projection, which is in great consistency with Corollary \ref{Corollary:SuperC-nonhomo}.

\begin{table}[htbp]\centering\scriptsize
\caption{Test Case 6: Convergence of the lowest order WG-FEM on the unit cubic domain $(0,1)^{3}$ with exact solution $u=\cos(\pi x)\sin(\pi y)\cos(\pi z)$, non-uniform cubic partitions, stabilization parameter $\rho=1$, and $L^2$ projection of the Dirichlet boundary data $g$.}\label{Example6:rho1:cubicWP:testmesh5-2-6-14:14}
{
\setlength{\extrarowheight}{1.5pt}
\begin{center}
\begin{tabular}{|l|l|l|l|l|l|}
\hline $meshes$ &$\|u-\S(u_{b})\|_{\infty,\star}$& $\|e_0\|_0$ & $\|\nabla_d{e_{b}}\|_0$  &$\|\nabla_d u_b-\nabla u\|_{1,\star}$  &    $\|e_0\|_{1,1}$ \\
\hline
$2\times2\times2$    &1.3717e-01    &4.8600e-02     &2.7327e-01               &4.3071e-01     &2.4263e-01\\     \hline
$4\times4\times4$    &8.4609e-02    &1.0223e-02     &9.0252e-02                &1.1798e-01    &8.0640e-02 \\      \hline
Rate              &0.70         & 2.25         &1.60                      & 1.87       &1.60 \\ \hline
\hline
$4\times4\times4$    &8.8874e-02    &1.0715e-02     &1.0106e-01               &1.2318e-01     &9.0524e-02\\     \hline
$8\times8\times8$    &2.9802e-02    &2.6654e-03     &2.7306e-02                &3.4129e-02    &2.5044e-02 \\      \hline
Rate              &1.58          & 2.01         &1.89                      & 1.85       &1.85 \\ \hline
\hline
$8\times8\times8$    &2.6820e-02    &2.4725e-03     &2.6568e-02               &3.3013e-02     &2.4314e-02\\     \hline
$16\times16\times16$    &7.5071e-03    &6.4926e-04     &7.1617e-03                &8.8805e-03    &6.6747e-03 \\      \hline
Rate              &1.84          & 1.93         &1.89                      & 1.89       &1.87 \\ \hline
\hline
$16\times16\times16$   &8.1166e-03    &7.4520e-04    &7.6548e-03     &9.5440e-03   &7.2359e-03\\     \hline
$32\times32\times32$   &2.1534e-03    &1.9459e-04    &2.2590e-03     &2.6909e-03   &2.1752e-03 \\      \hline
Rate        &1.91                & 1.94      &1.76                                  & 1.83  &1.73 \\   \hline
\hline
\end{tabular}
\end{center}
}
\end{table}
%
%
%
%
%
%
\begin{table}[htbp]\centering\scriptsize
\caption{Test Case 6: Convergence of the lowest order WG-FEM on the unit cubic domain $(0,1)^{3}$ with exact solution $u=\cos(\pi x)\sin(\pi y)\cos(\pi z)$, non-uniform cubic partitions, stabilization parameter $\rho=1$, and perturbed $L^2$ projection of the Dirichlet boundary data $g$ by \eqref{EQ:PL2:01}-\eqref{EQ:PL2:03}.}\label{Example6:rho1:cubicPWP:testmesh5-2-6-14:14}
{
\setlength{\extrarowheight}{1.5pt}
\begin{center}
\begin{tabular}{|l|l|l|l|l|l|}
\hline $meshes$ &$\|u-\S(u_{b})\|_{\infty,\star}$& $\|e_0\|_0$ & $\|\nabla_d{e_{b}}\|_0$  &$\|\nabla_d u_b-\nabla u\|_{1,\star}$  &    $\|e_0\|_{1,1}$ \\
\hline
$2\times2\times2$    &3.3996e-01    &6.0584e-01     &3.2721e+00               &2.7915e+00     &3.2143e+00\\     \hline
$4\times4\times4$    &2.4166e-01    &1.4017e-01     &8.7948e-01                &7.4759e-01    &8.6207e-01 \\      \hline
Rate              &0.49         & 2.11         &1.90                      & 1.90       &1.90 \\ \hline
\hline
$4\times4\times4$    &2.6832e-01    &1.5400e-01     &9.6036e-01               &8.2564e-01     &9.4258e-01\\     \hline
$8\times8\times8$    &9.8770e-02    &3.9531e-02     &2.5404e-01                &2.1932e-01    &2.4937e-01 \\      \hline
Rate              &1.44          & 1.96         &1.92                      & 1.91      &1.92 \\ \hline
\hline
$8\times8\times8$    &9.0683e-02       &3.7438e-02     &2.4135e-01               &2.0704e-01     &2.3677e-01\\     \hline
$16\times16\times16$    &2.6625e-02    &9.5967e-03     &6.2081e-02                &5.3436e-02    &6.0922e-02 \\      \hline
Rate              &1.77          & 1.96         &1.96                      & 1.95       &1.96 \\ \hline
\hline
$16\times16\times16$   &2.9695e-02    &1.0408e-02    &6.7150e-02     &5.8436e-02   &6.5996e-02\\     \hline
$32\times32\times32$   &7.8290e-03    &2.6270e-03    &1.6961e-02     &1.4781e-02   &1.6672e-02 \\      \hline
Rate        &1.92                & 1.99      &1.99                                  & 1.98  &1.98 \\   \hline
\hline
\end{tabular}
\end{center}
}
\end{table}
%
%
\subsection{Numerical experiments for piecewise constant diffusion tensor $A$}\label{subSection:20:29}

{\bf Test Case 7 (Nonhomogeneous BVP)} The domain $\Omega=(0,1)^{3}$ is divided into two subdomains by a flat face $x=1/2$, where $\Omega_1=(0,1/2)*(0,1)*(0,1)$ and $\Omega_2=(1/2,1)*(0,1)*(0,1)$. The diffusive coefficient tensor is $A_i=\bigl[\begin{smallmatrix}
\alpha_{i}^{x},0,0 \\
0,\alpha_{i}^{y},0\\
0,0,\alpha_{i}^{z}
\end{smallmatrix} \bigr]$, and the exact solution is $u_i=\alpha_{i}\cos(\pi x)\sin(\pi y)\cos(\pi z)$ for the subdomain $\O_i$, where the coefficients $\alpha^{x}_i,\alpha^{y}_i,\alpha^{z}_i,$ $\alpha_{i}$ are specified in Table \ref{Example8888:coefficient parameter}  for $i=1, 2$. The stablization parameter is $\rho=1$. Table \ref{Example7:rho1:CquWP} presents that the convergence rate for  $\|\nabla_d{e_{b}}\|_0$ is in the superconvergence order ${\cal{O}}(h^{1.8})$ on the non-uniform partitions with the usual $L^2$ projection, which is better than the theory ${\cal{O}}(h^{1.5})$.

\begin{table}[htbp]\centering\scriptsize
{\color{black}{\caption{Test Case 7: Parameters for the diffusive coefficients and the exact solution.} \label{Example8888:coefficient parameter}
{
\setlength{\extrarowheight}{1.5pt}
\begin{center}
\begin{tabular}{|l|l|}
\hline
$\alpha_{1}^{x}=1000$   &$\alpha_{2}^{x}=1$\\
$\alpha_{1}^{y}=100$    &$\alpha_{2}^{y}=0.1$\\
$\alpha_{1}^{z}=10$    &$\alpha_{2}^{z}=0.01$\\
$\alpha_{1}=0.01$    &$\alpha_{2}=10$    \\ \hline
\end{tabular}
\end{center}
}}
}
\end{table}
\begin{table}[htbp]\centering\scriptsize
{\color{black}{\caption{Test Case 7: Convergence of the lowest order WG-FEM on $(0,1)^{3}$ with exact solution $u=\alpha_{i}\cos(\pi x)\sin(\pi y)\cos(\pi z)$, piecewise constant diffusive tensor, non-uniform cubic partitions, stabilization parameter $\rho=1$, $h=\max(|e_x|,|e_y|,|e_z|)$, and $L^2$ projection of the boundary data $g$.} \label{Example7:rho1:CquWP}
{
\setlength{\extrarowheight}{1.5pt}
\begin{center}
\begin{tabular}{|l|l|l|l|l|l|}
\hline $meshes$ &$\|u-\S(u_{b})\|_{\infty,\star}$& $\|e_0\|_0$ & $\|\nabla_d{e_{b}}\|_0$  &$\|\nabla_d u_b-\nabla u\|_{1,\star}$  &    $\|e_0\|_{1,1}$ \\
\hline
$3\times4\times5$     &5.9145e-01 &5.1023e-01    &6.8587e+00               &1.1085e+01     &6.7852e+00\\     \hline
$6\times8\times10$     &5.6868e-01    &2.0684e-01     &3.4145e+00                &3.3386e+00    &3.4065e+00 \\      \hline
$12\times16\times20$   &1.5697e-01    &4.9439e-02     &9.9702e-01                &9.8059e-01   &9.9530e-01   \\       \hline
$24\times32\times40$ &3.9016e-02      &1.3197e-02     &2.8390e-01             &2.8028e-01    &2.8352e-01 \\           \hline
\hline
Rate      &2.01        &1.91         &1.81                        & 1.81     & 1.81 \\
\hline
\end{tabular}
\end{center}
}}
}
\end{table}
\subsection{Numerical experiments for variable diffusive tensor $A$}\label{subSection:NE3}
{\bf Test Case 8 (Nonhomogeneous BVP)} We consider a nonhomogeneous boundary value problem with the exact solution $u=\sin(x)\sin(y)\sin(z)$. The coefficient tensor $A$ is a symmetric and positive definite matrix with $a_{11}=1+x^{2}$, $a_{12}=xy/4$, $a_{13}=xz/4$, $a_{22}=1+y^{2}$, $a_{23}=yz/4$, $a_{33}=1+z^{2}$. The non-uniform cubic partitions and the usual $L^2$ projection are used in this test with the stabilized parameter $\rho=1$. Table \ref{test case 8-2 :rho1:non-UniformVWP-2} shows that the convergence rate for $\|\nabla_d{e_{b}}\|_0$ is of order ${\cal{O}}(h^{1.9})$ which outperforms the result ${\cal{O}}(h^{1.5})$ in Theorem \ref{superconvergence-H-1-53}.

\begin{table}[htbp]\centering\scriptsize
\caption{Test Case 8
: Convergence of the lowest order WG-FEM on the unit cubic domain with exact solution $u=\sin( x)\sin( y)\sin(z)$, non-uniform cubic partitions, stabilization parameter $\rho=1$, $h=\max(|e_x|,|e_y|,|e_z|)$, and $L^2$ projection of the Dirichlet boundary data $g$. The coefficient matrix is $a_{11}=1+x^2$, $a_{12}=xy/4$, $a_{13}=xz/4$, $a_{22}=1+y^2$, $a_{23}=yz/4$, and $a_{33
}=1+z^2$.}\label{test case 8-2 :rho1:non-UniformVWP-2}
{
\setlength{\extrarowheight}{1.5pt}
\begin{center}
\begin{tabular}{|l|l|l|l|l|l|}
\hline $meshes$ &$\|u-\S(u_{b})\|_{\infty,\star}$& $\|e_0\|_0$ & $\|\nabla_d{e_{b}}\|_0$  &$\|\nabla_d u_b-\nabla u\|_{1,\star}$  &    $\|e_0\|_{1,1}$ \\
\hline
$3\times4\times5$     &4.8515e-03    &9.3955e-04    &8.4062e-03               &8.8736e-03     &8.3704e-03\\     \hline
$6\times8\times10$     &1.3571e-03    &3.1310e-04     &3.7875e-03                &3.8168e-03    &3.7760e-03 \\      \hline
$12\times16\times20$   &3.9676e-04    &9.4737e-05     &1.1738e-03                &1.1752e-03   &1.1707e-03   \\       \hline
$24\times32\times40$  &1.1258e-04     &2.5404e-05     &3.1457e-04               &3.1448e-04     &3.1379e-04\\           \hline
\hline
Rate                    &1.82          &1.90            &1.90                     & 1.90      &1.90 \\
\hline
\end{tabular}
\end{center}
}
\end{table}

{\bf Test Case 9 (Reaction-diffusion equation)} Consider the reaction-diffusion model:
\begin{equation*}\label{re-D}
\begin{split}
-\Delta u+ cu =& f \quad \mbox{in}~~ \O=(0,1)^{3}, \\
u =& g\quad \mbox{on}~~ \pa\O,
\end{split}
\end{equation*}
where the reaction coefficient is $c=2$. The stabilizer parameter is $\rho=1$, and the exact solution is $u=x(1-x)y(1-2y)z(1-3z)$. The non-uniform cubic partitions and the usual $L^2$ projection are taken in the test. Table \ref{test case 7:rho1:Rect-di-WP} indicates that the convergence order for $\|\nabla_d{e_{b}}\|_0$ seems to be in a superconvergence order of ${\cal{O}}(h^{1.7})$. It should be pointed out that the reaction-diffusion model is not the second order elliptic model for which the superconvergence theory is established in the paper. However, the numerical results demonstrate a good computational performance of the WG finite element method for the reaction-diffusion model.

\begin{table}[htbp]\centering\scriptsize
\caption{Test Case 9: Convergence of the lowest order WG-FEM on the $(0,1)^{3}$ with exact solution $u=x(1-x)y(1-2y)z(1-3z)$, non-uniform cubic partitions, stabilization parameter $\rho=1$, $h=\max(|e_x|,|e_y|,|e_z|)$, and $L^2$ projection of the boundary data $g$. }\label{test case 7:rho1:Rect-di-WP}
{
\setlength{\extrarowheight}{1.5pt}
\begin{center}
\begin{tabular}{|l|l|l|l|l|l|}
\hline $meshes$ &$\|u-\S(u_{b})\|_{\infty,\star}$& $\|e_0\|_0$ & $\|\nabla_d{e_{b}}\|_0$  &$\|\nabla_d u_b-\nabla u\|_{1,\star}$  &    $\|e_0\|_{1,1}$ \\
\hline
$3\times4\times5$     &2.1502e-02    &7.5361e-03    &7.2672e-02               &7.4019e-02     &7.2672e-02\\     \hline
$6\times8\times10$     &1.1389e-02    &1.9955e-03     &2.6789e-02                &2.7184e-02    &2.6789e-02 \\      \hline
$12\times16\times20$   &3.7672e-03    &5.3715e-04     &8.6059e-03                &8.6998e-03   &8.6059e-03   \\       \hline
$24\times32\times40$  &1.0559e-03     &1.4113e-04     &2.6083e-03                &2.6290e-03    &2.6083e-03\\           \hline
\hline
Rate                    &1.84        &1.93            &1.72                     &1.73        &1.72 \\
\hline
\end{tabular}
\end{center}
}
\end{table}

In summary,  the superconvergence theory established in this paper is well verified by various numerical experiments. It is exciting that the convergence rate for $\|\nabla_d{e_{b}}\|_0$ is higher than the conclusion ${\cal{O}}(h^{1.5})$ in Theorem \ref{superconvergence-H-1-53} when the usual $L^2$ projection is taken. The numerical results show that the numerical solution related to the perturbed $L^2$ projection does perform better than the numerical solution related to the usual $L^2$ projection. Furthermore, the numerical solution for the reaction-diffusion equation shows a superconvergence error estimate by using the weak Galerkin scheme.

%
%
\begin{acknowledge}
The authors would like to gratefully acknowledge Dr. Junping Wang in NSF for his invaluable discussion and suggestion for this paper.
\end{acknowledge}


\begin{thebibliography}{99}


\bibitem{PEE_MAJT2000}  {\sc M. Ainsworth and J. T. Oden}, {\em
A posteriori error estimation in finite element analysis}, Wiley Interscience, New York, 2000.

\bibitem{HFS_AMC2001}   {\sc J. H. Brandts and M. K\v{R}\'{I}\v{Z}EK}, {\em History and future of superconvergence in three-dimensional finite element methods}, in proceedings of the conference on finite element methods: three-dimensional problems, GAKUTO Internat. Ser. Math. Sci. Appl. 15, Gakk\={o}tosho, Tokyo, pp. 24-35,  2001.

\bibitem{CBPE_MMA1996} {\sc
I. Babu\v{s}ka, T. Strouboulis, C. S. Upadhyay and S. K. Gangaray,}, {\em Computer-based proof of existence of superconvergence points in the finite element method; superconvergence of derivatives in finite element
solutions of Laplaces's, Poisson's and the elasticity equations}, Numer. Methods Partial Differential Equations., vol. 12, pp. 347-392, 1996.





\bibitem{HCJW-2003}  {\sc H. Chen and J. Wang}, {\em
An interior estimate of superconvergence for finite element solutions for second-order elliptic problems on quasi-uniform meshes by local projections},  SIAM J. Numer. Anal., vol. 41 (4) , pp. 1318-1338, 2003.

\bibitem{CMCYQH1995}  {\sc C. M. Chen and Y. Q. Huang}, {\em
High accuracy theory of finite elements (in Chinese)}, Hunan Science Press, Changsha, China. 1995.



\bibitem{SDG_SJAM2015}
{\sc W. Cao, C. Shu, Y. Yang and Z. Zhang}, {\em Superconvergence of discontinuous Galerkin methods for two-dimensional hyperbolic equations},
SIAM J. Numer. Anal., vol.  53(4), pp. 1651-1671, 2015.




\bibitem{REELJW-2002}  {\sc R. E. Ewing, M. Liu and J. Wang}, {\em
A new superconvergence for mixed finite element approximations},  SIAM J. Numer. Anal., vol. 40 (6) , pp. 2133-2150, 2002.


\bibitem{FLLWZ-2006}  {\sc G. Fairweather, Q. Lin, Y. Lin, J. Wang and S. Zhang}, {\em
Asymptotic expansions and richardson extrapolation of approximate solutions for second order elliptic problems on rectangular domains by mixed finite element methods},  SIAM J. Numer. Anal., vol. 44 (3) , pp. 1122-1149, 2006.



\bibitem{JAFRDG-2006}   {\sc J. A. Ferreira and R. D. Grigorieff}, {\em Supraconvergence and supercloseness of a scheme for elliptic equations on nonuniform grids}, Numer. Funct. Anal. Optim., vol. 27, pp. 539-564,  2006.

\bibitem{REXFZC-2018}   {\sc R. He, X. Feng and Z. Chen}, {\em $H^{1}$-superconvergence of a difference finite element method based on the $P_{1}-P_{1}$-conforming element on non-uniform meshes for the $3D$ Possion equation}, Math. Comp., vol. 87 (312), pp. 1659-1688,  2018.

\bibitem{AHSH2014}  {\sc A. Harris and S. Harris}, {\em
Superconvergence of weak Galerkin finite element approximation for second order elliptic problems by L2-projections},  Appl. Math. Comput., vol.  227, pp. 610-621, 2014.

\bibitem{AHSKMK2010}  {\sc A. Hannukainen, S. Korotov and M. Krizek}, {\em
Nodal $\OO(h^{4})$-superconvergence in $3D$ by averaging piecewise linear, bilinear, and trilinear FE approximations},  J. Comput. Math., vol.  28 (1), pp. 1-10, 2010.


\bibitem{HTJW-2002}  {\sc B. Heimsund, X. Tai and J. Wang}, {\em
Superconvergence for the gradient of finite element approximations by L2-projections},  SIAM J. Numer. Anal., vol. 40 (4) , pp. 1263-1280, 2002.


\bibitem{DWGWE_JSC2017}{\sc
Y. Huang, J. Li and D. Li}, {\em Developing weak Galerkin finite element method for the wave equation}, Numer. Meth. Partial Differential Equations., vol.  33(3), pp. 868-884, 2017.



\bibitem{MKPN1984}  {\sc M. Krizek and P. Neittaanm\"{a}ki}, {\em
Superconvergence phenomenon in the finite element method arising from averaging gradients},  Numer. Math., vol.  45, pp. 105-116, 1984.


\bibitem{VKPDL1986}  {\sc V. Kantchev and P. D. Lazarov}, {\em
Superconvergence of the gradient of linear finite elements for $3D$ possion equation},  Optimal Algorithms, Publ. Bulg. Acad. Sci., Sofia, pp. 172-182, 1986.

\bibitem{k2005}{\sc
M. Krizek}, {\em Superconvergence phenomenon on three-dimensional meshes}, International Journal of Numerical Analysis and Modeling, vol. 2(1), pp. 43-56, 2015.

\bibitem{kn1987}{\sc
M. Krizek and P. Neittaanmaki}, {\em On superconvergence techniques}, Acta Appl. Math., vol. 9, pp. 175-198, 1987.


\bibitem{LYZZ-WG-2018}  {\sc R. Lin, X. Ye, S. Zhang and P. Zhu}, {\em
A weak Galerkin finite element method for singularly perturbed by convection-diffusion-reaction problems},  SIAM J. Numer. Anal., vol. 56 (3) , pp. 1482-1497, 2018.



\bibitem{LJW2018}  {\sc Y. Liu and J. Wang}, {\em
A simiplified weak Galerkin finite element method: algorithm and error estimates}, https://arxiv.org/pdf/1808.08667v2.pdf.


\bibitem{RLZZ1008}  {\sc R. Lin and Z. Zhang}, {\em
Natural superconvergence points in three-dimensional finite elements},  SIAM J. Numer. Anal., vol.  46 , pp. 1281-1297, 2008.


\bibitem{sup_LWW2018}{\sc D. Li, C. Wang and J. Wang}, {\em
Superconvergence of the gradient approximation for weak Galerkin finite element methods on nonuniform rectangular partitions}, https://arxiv.org/pdf/1804.03998v2.pdf.

\bibitem{sup_LW2018}{\sc Y. Liu and J. Wang}, {\em
Simplified weak Galerkin and finite difference schemes for the stokes equation}, https://arxiv.org/pdf/1803.0012v1.pdf.

\bibitem{sup_CRC2016} {\sc G. R. Liu and T. Nguyen-Thoi},
{\em Smoothed finite element methods}. CRC press. 2016.


\bibitem{CSA_SIAM2016}  {\sc K. Mustapha, M. Nour and B. Cockburn}, {\em
Convergence and superconvergence analyses of HDG methods for time fractional diffusion problems},  Adv. Comput. Math.,  vol. 42 (2), pp. 377-393, 2016.



\bibitem{ANWG_EIP2016}
 {\sc L. Mu, J. Wang, X. Ye and S. Zhao}, {\em
 A new weak Galerkin finite element method for elliptic interface problems},
 J. Comput. Phy., vol. 325, pp. 157-173, 2016.



\bibitem{MWYS_MAX2015}
 {\sc L. Mu, J. Wang, X. Ye and S. Zhang}, {\em
 A weak Galerkin finite element method for the Maxwell equations},
 J. Sci. Comput., vol. 65, pp. 363-386, 2015.

 \bibitem{MWYS_HE2014}
 {\sc L. Mu, J. Wang, X. Ye and S. Zhao}, {\em
 Numerical studies on the weak Galerkin method for the Helmholtz equation with large wave number},
 Communications in Computations in Computational Physics., vol. 15, pp. 1461-1474, 2014.

\bibitem{ellip_NWA2013}  {\sc L. Mu, J. Wang, Y. Wang and X. Ye}, {\em
A computational study of the weak Galerkin method for second-order elliptic equations},   Numer.  Algor., vol. 63, pp. 753-777, 2013.



\bibitem{AHSIHLB-1996}  {\sc A. H. Schatz, I. H. Sloan and L. B. Wahlbin}, {\em Superconvergence in finite element methods and meshes that are locally symmetric with respect to a point}, SIAM J. Numer. Anal., vol.  33, pp. 505-521, 1996.

\bibitem{WG_ME2017}  {\sc S. Shields, J. Li and E. A. Machorro}, {\em
Weak Galerkin methods for time-dependent Maxwell's equations},  Comput. Math. Appl., vol.  74, pp. 2106-2124, 2017.

\bibitem{ellip_JW2000}  {\sc J. Wang}, {\em
A superconvergence analysis for finite element solutions by the least-squares surface fitting on irregular meshes for smooth problems}, J. Math. Study., vol. 33(3), pp. 229-243, 2000.

\bibitem{sup_GFEM2006} {\sc L. Wahlbin}, {\em  Superconvergence in Galerkin finite element methods}, Springer. 2006.



\bibitem{SAPP_JSC2018} {\sc R. Wang, R. Zhang, X. Zhang and Z. Zhang}, {\em
Superconvergence analysis and polynomial preserving Recovery for a class of weak Galerkin Methods}, Numer. Meth. Partial Differential Equations., vol. 34 (1), pp. 317-335, 2018.


\bibitem{PDWG_MC2018} {\sc C. Wang and J. Wang}, {\em
A primal-dual weak Galerkin finite element method for second order elliptic equations in non-divergence form}, Math. Comp., vol. 87, pp. 515-545, 2018.


\bibitem{WZZZ_MC2018} {\sc J. Wang, Q. Zhai, R. Zhang and S. Zhang}, {\em
A weak Galerkin finite element scheme for the Cahn-Hilliard equation}, Math. Comp., vol. 88 (315), pp. 211-235, 2018.



\bibitem{ellip_WY2013} {\sc J. Wang and X. Ye}, {\em
A weak Galerkin finite element method for second-order elliptic problems}, J. Comput. Appl. Math., vol. 307, pp. 103-115, 2013.


\bibitem{ellip_MC2014} {\sc J. Wang and X. Ye}, {\em
A weak Galerkin mixed finite element method for second-order elliptic problems}, Math. Comp., vol. 83, pp. 2101-2126, 2014.

\bibitem{wangsup} {\sc C. Wang}, {\em Superconvergence of Ritz-Galerkin finite element approximations for second order elliptic problems}, Numerical Methods for Partial Differential Equations., vol. 34, pp. 838-856, 2018.

\bibitem{fp2018} {\sc C. Wang and J. Wang}, {\em
A primal-dual weak Galerkin finite element method for Fokker-Planck type equations}, https://arxiv.org/pdf/1704.05606.pdf, SIAM Journal of Numerical Analysis, accepted.

\bibitem{ECP2018} {\sc C. Wang and J. Wang}, {\em
Primal-dual weak Galerkin finite element methods for elliptic cauchy problems}, https://arxiv.org/pdf/1806.01583.pdf.

\bibitem{CW2018}  {\sc C. Wang}, {\em
A new primal-dual weak Galerkin finite element method for ill-posed elliptic cauchy problems}, https://arxiv.org/pdf/1809.04697v1.pdf.


\bibitem{WWmaxwell}  {\sc C. Wang}, {\em New discretization schemes for time-harmonic Maxwell equations by weak Galerkin finite element methods}, Journal of Computational and Applied Mathematics, Vol. 341, pp. 127-143, 2018.



\bibitem{WWdivcurl}  {\sc C. Wang and J. Wang}, {\em Discretization of div-curl systems by weak Galerkin finite element methods on polyhedral partitions}, Journal of Scientific Computing, Vol.  68, pp. 1144-1171, 2016.


\bibitem{WWhybird}  {\sc C. Wang and J. Wang}, {\em A hybridized formulation for weak Galerkin finite element methods for biharmonic equation on polygonal or polyhedral meshes}, International Journal of Numerical Analysis and Modeling, Vol. 12, pp. 302-317, 2015.

\bibitem{WWscience}  {\sc J. Wang and C. Wang}, {\em Weak Galerkin finite element methods for elliptic PDEs}, Science China, Vol. 45, pp. 1061-1092, 2015.

\bibitem{WWbiharmonic}  {\sc C. Wang and J. Wang}, {\em An efficient numerical scheme for the biharmonic equation by weak Galerkin finite element methods on polygonal or polyhedral meshes}, Journal of Computers and Mathematics with Applications, Vol. 68, 12, pp. 2314-2330, 2014.


\bibitem{WCZ-2014}  {\sc H. Wei, L. Chen and B. Zheng}, {\em
Adaptive mesh refinement and superconvergence for two-dimensional interface problems},  SIAM J. Sci. Comput., vol. 36 (4) , pp. A1478-A1499, 2014.


\bibitem{JWYSE-2016}  {\sc J. Wang X. Ye}, {\em
A weak Galerkin finite element method for the stokes equations},  Adv. Comput. Math., vol. 42 , pp. 155-174, 2016.

 \bibitem{STOKES_NA2017} {\sc X. Zheng and X. Xie}, {\em
A posterior error estimator for a weak Gakerkin finite element solution of the stokes problem}, East Asian Journal on Applied Mathematics, vol. 7(3), pp. 508-529, 2017.

\bibitem{sup_CMAME1992} {\sc O. Zienkiewicz and J. Zhu}, {\em
The superconvergence patch recovery (SPR) and adaptive finite element refinement}, Comput. Methods Appl. Mech. Eng., vol. 101 (1-3), pp. 207-224, 1992.



\bibitem{supposter_PIJNM11992} {\sc O. Zienkiewics and J. Zhu}, {\em
The superconvergence patch recovery and a posteriori error estimates}, Part 1,   Internat. J. Numer. Methods Engrg., vol.  33, pp.  1331-1364, 1992.

\bibitem{supposter_PIJNM21992} {\sc O. Zienkiewics and J. Zhu}, {\em
The superconvergence patch recovery and a posteriori error estimates}, Part 2,  Internat. J. Numer. Methods Engrg., vol. 33, pp. 1365-1382, 1992.



\bibitem{sup_QDZQL1989} {\sc Q. D. Zhu and Q. Lin}, {\em
Superconvergence theory of the finite element method}, Hunan Science Press, China, Changsha, 1989.

\bibitem{MZ_MMA2002} {\sc M. Zlamal}, {\em
Some superconvergence results in the finite element method}. In A. Dold and B. Eckmann, editors, Mathematical Aspects of Finite Element Methods, number 606 in Springers Lecture Notes in Mathematics, 1975.


\bibitem{ZZ1998}  {\sc Z. Zhang}, {\em
Derivative superconvergence points in finite element solutions of poission's equation for the serendipity and intermediate families--a theoretical justification},  Math. Comp., vol.  67, pp. 541-552, 1998.











\end{thebibliography}
\end{document}